\documentclass[reqno,12pt,letterpaper]{amsart}
\usepackage[proof]{sdmacros}

\newcommand{\per}{\mathrm{per}}

\def\SD#1{\textcolor{blue}{#1}}
\DeclareMathOperator{\dvol}{dvol}
\graphicspath{{fig/}}
\title{Trace formula for billiard flow on surfaces with concave boundary}
\author{Alain Kangabire}
\address{Department of Mathematics \\ Massachusetts Institute of Technology \\ \newline 77 Massachusetts Ave, Cambridge, MA 02139}
\email{kanga27@mit.edu}

\begin{document}

\begin{abstract}
We study grazing closed trajectories of the billiard flow $\varphi^t$ on a compact surface $(\Sigma,g)$ with concave boundary. Under a non-degeneracy condition, which is satisfied for example when $(\Sigma,g)$ has negative sectional curvature, we compute the trace of a regularized pullback $E_{\epsilon}(\varphi^t)^* E_{\epsilon}$ near a closed trajectory grazing once, which is an analogue of the Atiyah--Bott--Guillemin trace formula for closed billiard trajectories.   
\end{abstract}

\maketitle

%%%%%%%%%%%%%%%%%%%%%%%%%%%%%%%%%%%%%%%%%%%%%%%%%%%%%%%%%%%%%%%%%%%%%%%%%%%%%%%%
%%%%%%%%%%%%%%%%%%%%%%%%%%%%%%%%%%%%%%%%%%%%%%%%%%%%%%%%%%%%%%%%%%%%%%%%%%%%%%%%

\section{Introduction}
\label{sec: intro}

In this paper we prove a trace formula for a class of dispersive billiards. Dispersive billiards are
a classical setting for strongly chaotic / hyperbolic dynamical systems and trace formulas are a powerful tool used for example in understanding the dynamical zeta functions and length spectrum asymptotics, see~ \cite{S67}, \cite{M69}, \cite{G77}, \cite{GLP13}, \cite{DZ16}, and \cite{DZ17}.

A standard setting of strongly chaotic dynamics is given by geodesic flows on compact negatively curved manifolds without boundary. Let $(\Sigma,g)$ be such a manifold, $M=S\Sigma$ its unit tangent bundle, and denote the geodesic flow by
$$
\varphi^t:M\to M.
$$
Consider the pullback operator $(\varphi^t)^*:C^\infty(M)\to C^\infty(M)$. It is not trace class (say, on $L^2(M)$) but one can define its \emph{flat trace} by means of a regularization procedure:
$$
\tr^{\flat} (\varphi^t)^* = \lim_{\epsilon \rightarrow 0} \tr E_{\epsilon} (\varphi^t)^* E_{\epsilon}, 
$$
where the smoothing operators $E_{\epsilon}: \mathcal D'(M) \rightarrow C^{\infty}(M)$ are defined in ~\cite[\S2.4]{DZ16} and $\tr$ denotes the trace for trace class operators.  
The Atiyah--Bott--Guillemin trace formula~(\cite{AB67}, \cite{G77}) states that, in the sense of distributions
in $\mathcal D'((0,\infty))$,
\begin{equation}
    \label{eqn: trace formula boundaryless case}
    \tr^\flat \varphi_t^*=\sum_{\gamma} \frac{T_{\gamma}^{\#} \delta(t - T_{\gamma})}{|\det(I - \mathcal P_{\gamma})|}.
\end{equation}
The sum is over closed trajectories $\gamma$ of $\varphi^t$; and $T_{\gamma}$ is the period of $\gamma$, $T^{\#}_{\gamma}$ the primitive period, and $\mathcal P_{\gamma}$ the linearized Poincar\'e map of the closed trajectory $\gamma$.

The setting of manifolds with boundary, which is the topic of this paper, presents considerable
new difficulties compared to the boundaryless case; see the works of Sinai ~\cite{S70} on mixing for billiard flows, the work of Young \cite{Y98} on exponential mixing for billiard maps
and the work of Baladi--Demers--Liverani \cite{BDL18} on exponential mixing for billiard flows. The question of trace formulas and dynamical zeta functions on closed billiards, to the best of our knowledge, is not addressed in the literature. However, there is recent work by Chaubet--Petkov \cite{CP22} and Delarue--Sch\"utte--Weich \cite{DSW24} on open billiards, as well as work by Baladi--Demers \cite{BD20} which studies closed trajectories on closed billiards without trace formulas.

The main difficulty in the case of closed billiards (typically avoided for open billiards by imposing a non-eclipse condition) is the presence of \emph{grazing rays}, which intersect the boundary tangentially.
This is particularly difficult to analyze when the length of the trajectory tends to infinity, however it is already challenging for trajectories of fixed length.

%%%%%%%%%%%%%%%%%%%%%%%%%%%%%%%%%%%%%%%%%%%%%%%%%%%%%%%%%%%%%%%%%%%%%%%%%%%%%%%%
\begin{figure}
\includegraphics[scale=0.8]{fig/dispersing_billiard_grazing.1}
\caption{A closed trajectory with a single grazing point.}
\label{f:grazing-intro}
\end{figure}
%%%%%%%%%%%%%%%%%%%%%%%%%%%%%%%%%%%%%%%%%%%%%%%%%%%%%%%%%%%%%%%%%%%%%%%%%%%%%%%%

In the present paper, we prove a trace formula near a fixed closed trajectory which has one grazing point. (Much of the proof applies to multiple grazing points, however we were not able to handle this case to some possibly technical difficulties, see~\S \ref{sec: difficulties for mult glancing points} for more details). 

We now describe the setting. Let $(\Sigma,g)$ be a compact manifold with concave boundary (see~\S \ref{sec: setup} for precise definitions) and $M = S \Sigma $ its sphere bundle. Let 
$$
\varphi^t: M \rightarrow M \quad (t \geq 0)
$$
be the billiard flow on $M$, meaning $\varphi^t$ is the geodesic flow in the interior of $M$ and obeys the law of reflection (the angle of incidence is equal to the angle of reflection) at the boundary $\partial M$ (refer to~\S \ref{sec: billiard flow and glan hypersurf} for formal definitions). 

The pullback operator $(\varphi^t)^*: C^{\infty}(M) \rightarrow \mathcal D'(M)$ has worse mapping properties than in the boundaryless case; but nonetheless we would like to make sense of its \textit{flat trace} as 
$$
\tr^{\flat}(\varphi^t)^* = \lim_{\epsilon \rightarrow 0} \tr E_{\epsilon} (\varphi^t)^* E_{\epsilon},
$$
where $E_{\epsilon}: \mathcal D'(M) \rightarrow C^{\infty}(M)$ are smoothing operators with similar properties as the smoothing operators in \cite[\S 2.4]{DZ16} (see \S \ref{sec: regularized flat trace} for an explicit definition of $E_{\epsilon}$). The operator $E_{\epsilon} (\varphi^t)^* E_{\epsilon}$ has a smooth Schwartz kernel; so it is indeed trace class and $\tr E_{\epsilon}(\varphi^t)^*E_{\epsilon}$ is well defined for each fixed $\epsilon > 0$. Under suitable assumptions (to be made precise below), we show that the limit above exists, as a distribution, and compute explicitly what it converges to. 

Let $\gamma$ be a closed billiard trajectory with exactly one grazing point in the sense of Lemma \ref{lemma: Poinc return map for closed traj with m glancing pts} (notice that $\gamma$ must then be primitive) and 
$$
P_{\gamma}: U_{\gamma} \rightarrow U_{\gamma}, \quad P_{\gamma}(x_0) = x_0
$$
its associated Poincar\'e return map. Here we choose the Poincar\'e section $U_{\gamma}$ in the interior of $M$. Unlike in the boundaryless case, the differential of $P_{\gamma}$ is not defined at $x_0$ or any other point $x \in U_{\gamma}$ where the billiard flow starting at $x$ intersects the boundary tangentially before the first return time.   

Because $\gamma$ has exactly one grazing point, the set of points $x \in U_{\gamma}$ where the differential $dP_{\gamma}$ is not defined is a smooth hypersurface in $U_{\gamma}$ containing $x_0$ and which splits $U_{\gamma}$ into two disjoint connected sets $U_{\gamma, 0}$ and $U_{\gamma, 1}$ (the sets $U_{\gamma, 0}$ and $U_{\gamma,1}$ are defined in Equation \eqref{eqn: partition of U based on how many boundary hitting points}). Figure \ref{fig: splitting of Poinc section} illustrates $U_{\gamma,0}$ and $U_{\gamma,1}$.
\begin{figure}[ht]
    \centering
    \includegraphics[width=0.50\linewidth]{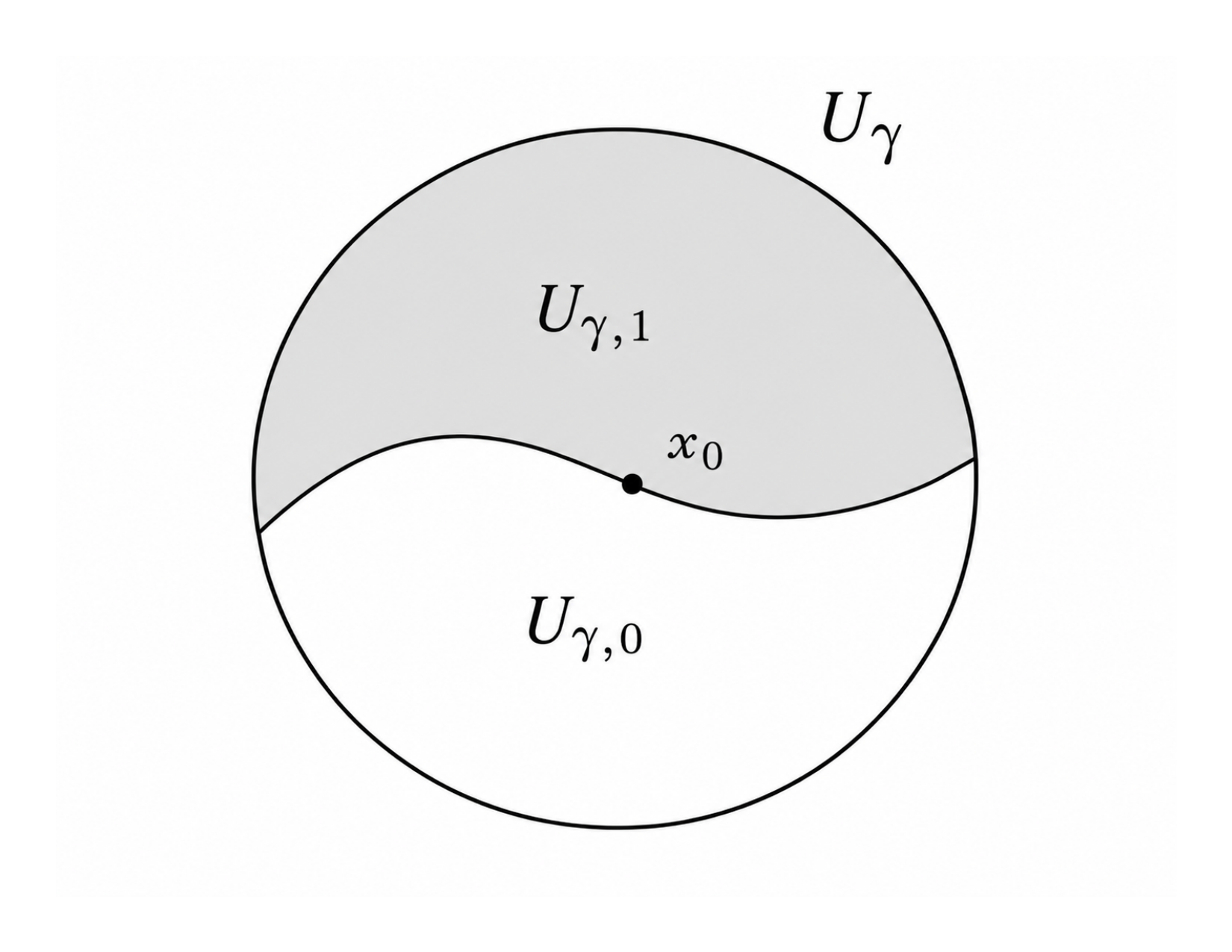}
    \caption{The Poincar\'e section $U_{\gamma}$ and the sets $U_{\gamma,0}$ and $U_{\gamma,1}$ on which the differential of $P_{\gamma}$ is defined.}
    \label{fig: splitting of Poinc section}
\end{figure}

These sets $U_{\gamma,1}$ and $U_{\gamma,0}$ have the property  that a billiard trajectory starting from a point $x \in U_{\gamma,1}$ (or $x \in U_{\gamma,0}$) intersects (or does not intersect) the boundary near the grazing point on $\gamma$ before the next intersection $P_{\gamma}(x)$ with the Poincar\'e section $U_{\gamma}$. 

Under the following \textit{non-degeneracy} assumption (made precise in Equation \eqref{eqn: curvature assumption})
\begin{equation}
\label{eqn: non-degeneracy assumption for one glanc point}
  \begin{minipage}{\textwidth}
    \begin{itemize}
        \item $(\Sigma,g)$ has dimension $n=2$ and 
        \item $(\Sigma,g)$ has negative sectional curvature or $(\Sigma,g)$ has nonpositive sectional curvature and  $\gamma$  has at least one strict reflection point on the boundary $\partial M$,
    \end{itemize}
  \end{minipage}
\end{equation}
 Proposition \ref{prop: expans of det of Poinc ret map in coord minus ident with non-zero coeff} implies that if $\psi$ is any coordinate function for $U_{\gamma}$ and $\widetilde P_{\gamma} = \psi \circ P_{\gamma} \circ \psi^{-1}$, then
 $$
 \det(I - d \widetilde P_{\gamma}) \quad \text{does not vanish on} \quad \psi(U_{\gamma,j}) \quad (j=0,1).
 $$
 
  While $|\det(I - d \widetilde P_{\gamma})|^{-1}$ is not defined at $\psi(x_0)$, yet Corollary \ref{corr: limits diff of Poinc retur map minus id based on reflect pts} shows that for all sequences $\{x_{n,j} \}_{n \in \mathbb N} \subset U_{\gamma,j}$ converging to $x_0$, the sequence ~$|\det(I - \widetilde P_{\gamma}(\psi(x_{n,j})))|^{-1}$ converges to the same limit point. So, let
 $$
 c_{\gamma,j} := \lim_{x_n \rightarrow \infty} |\det(I - d\widetilde P_{\gamma}(\psi(x_{n,j})) )|^{-1},
 $$
 which is shown in Corollary \ref{corr: limits diff of Poinc retur map minus id based on reflect pts} to be independent of the coordinate $\psi$, strictly positive if $j=0$ and zero if $j=1$. 
 
 For a closed billiard trajectory with one grazing point, these constants $c_{\gamma, 0}>0$ and $c_{\gamma, 1}=0$ are the correct replacement for the term $|\det(I-\mathcal P_{\gamma})|^{-1}$ in the boundaryless trace formula \eqref{eqn: trace formula boundaryless case}. Precisely, we state our main theorem.
\begin{theo}
    \label{thm: trace formula near fixed closed traj with one grazing point}
    Let $f \in C_c^{\infty}((0,\infty))$. If the non-degeneracy assumption in \eqref{eqn: non-degeneracy assumption for one glanc point} holds and all closed billiard trajectories $\gamma$ with period $T_{\gamma} \in \supp f$ have one grazing point (in the sense of Lemma \ref{lemma: Poinc return map for closed traj with m glancing pts}), then 
    \begin{equation}
        \label{eqn: trace formula billiard flow with one glanc point}
        \lim_{\epsilon \rightarrow 0} \int_0^{\infty} f(t) \left(\tr E_{\epsilon} (\varphi^t)^* E_{\epsilon} \right) dt = \frac{1}{2}\sum_{\gamma}  c_{\gamma, 0} \,T_{\gamma}f(T_{\gamma}).
    \end{equation}
\end{theo}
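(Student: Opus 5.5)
The plan is to localize near each closed trajectory and reduce the flat trace to a stationary-phase-type computation in Poincaré section coordinates, treating the two pieces $U_{\gamma,0}$ and $U_{\gamma,1}$ separately.

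\textbf{Localization.} Let $f\in C_c^\infty((0,\infty))$. Away from closed trajectories with period in $\supp f$, $\varphi^t(x)\neq x$ uniformly. Hence the Schwartz kernel of $E_\epsilon(\varphi^t)^*E_\epsilon$ restricted to the diagonal, integrated against $f(t)\,dt$, tends to zero there, because $E_\epsilon$ has kernel concentrated in an $\epsilon$-neighborhood of the diagonal. By compactness only finitely many $\gamma$ contribute. Using a partition of unity we reduce to a flow-box neighborhood $\{\varphi^s(x): x\in U_\gamma,\ |s|<\delta\}$ of a single $\gamma$, chosen in the interior of $M$ near the section $U_\gamma$.

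\textbf{Flow-box coordinates.} In coordinates $(s,y)$ with $y=\psi(x)\in\mathbb R^{2n-2}=\mathbb R^2$, and with $\tau(y)$ the first return time, the flow for $t$ near $T_\gamma$ reads $(s,y)\mapsto (s+t-\tau(y),\widetilde P_\gamma(y))$. Choose $E_\epsilon$ to act as a convolution $\epsilon^{-3}\chi(\cdot/\epsilon)$ in these coordinates, which is permissible modulo errors vanishing as $\epsilon\to0$. The trace then becomes
$$\int f(t)\int \epsilon^{-3}\rho\Big(\tfrac{t-\tau(y)}{\epsilon},\tfrac{\widetilde P_\gamma(y)-y}{\epsilon}\Big)\,dy\,ds\,dt$$
with $\int\rho=1$, up to Jacobian factors that tend to $1$. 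Integrating over $s$ and $t$ gives $T_\gamma f(T_\gamma)$ times the quantity
$$I_\epsilon=\int \epsilon^{-2}\rho_1\big((\widetilde P_\gamma(y)-y)/\epsilon\big)\,dy.$$

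\textbf{Splitting.} Write $I_\epsilon=I_{\epsilon,0}+I_{\epsilon,1}$ according to $y\in\psi(U_{\gamma,j})$. On each piece, $F_j=\widetilde P_\gamma-\mathrm{id}$ is smooth up to the hypersurface $H=\psi(\partial U_{\gamma,j})$ in a suitable sense. By Proposition \ref{prop: expans of det of Poinc ret map in coord minus ident with non-zero coeff}, $\det dF_j\neq0$ on $\psi(U_{\gamma,j})$. We change variables $z=F_j(y)/\epsilon$. If $F_j$ is injective near $\psi(x_0)$, then $I_{\epsilon,j}\approx\int_{F_j(\psi(U_{\gamma,j}))/\epsilon}\rho_1(z)\,|\det dF_j(F_j^{-1}(\epsilon z))|^{-1}dz$. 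As $\epsilon\to0$, the inner factor tends to $c_{\gamma,j}$ by Corollary \ref{corr: limits diff of Poinc retur map minus id based on reflect pts}, since $F_j^{-1}(\epsilon z)\to\psi(x_0)$. The domain $F_j(\psi(U_{\gamma,j}))/\epsilon$ should converge to a half-plane, because $F_0$ extends smoothly across $H$ with invertible differential, so the image of a half-neighborhood is asymptotically a half-plane through $0$. Taking $\rho_1$ even, this yields $\tfrac12 c_{\gamma,0}$. For $j=1$, $c_{\gamma,1}=0$: the map $F_1$ has a square-root-type singularity at $H$ that blows up $|\det dF_1|$. The contribution is bounded by $\int\rho_1(z)|\det dF_1|^{-1}$ over the image, and this tends to $0$ by dominated convergence. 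We still need a uniform bound, which I would get from the expansion in the Proposition.

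\textbf{Main obstacle.} The hardest step is the $j=1$ piece and the half-plane claim for $j=0$. Both require the precise local normal form of $P_\gamma$ near the grazing hypersurface: near a glancing point the reflected flow behaves like $y\mapsto y+c\sqrt{y_1}$ terms. One must show that $F_1$ is injective on a small neighborhood and that the image of $\psi(U_{\gamma,1})$ does not overlap or fold onto that of $\psi(U_{\gamma,0})$ in a way that destroys the factor $\tfrac12$. I would first derive from the Proposition an expansion $F_1(y)=F_0(y)+\sqrt{y_1}\,a(y)+O(y_1)$, with $a(\psi(x_0))$ transverse, and use it to verify injectivity and the degeneration $|\det dF_1|^{-1}=O(\sqrt{y_1})$. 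I would then check that $E_\epsilon$ commutes well enough with the nonsmooth $(\varphi^t)^*$ that the reduction to the model convolution is legitimate.
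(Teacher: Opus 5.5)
\textbf{The gap: the factor $T_\gamma$ and the boundary points of $\gamma$.} On your flow box $\{\varphi^s(x):x\in U_\gamma,\ |s|<\delta\}$ the $s$-integral only produces the length of that short interior segment, not $T_\gamma$. To get $T_\gamma$ you must account for all of $\gamma$, and $\gamma$ meets $\partial M$. It does so at the grazing point, and also at transversal reflection points, which the non-degeneracy assumption requires in the nonpositive-curvature case.

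Near these points neither of your reductions is available:
\begin{itemize}
\item $E_\epsilon$ carries the boundary cutoff $(1-w)(d(y,\partial M)/4\epsilon)$, so it is not approximately a convolution there.
\item The billiard flow is discontinuous, and near the grazing point the flow-out of a section is not a smooth chart, so there are no flow-box coordinates.
\end{itemize}
Checking that $E_\epsilon$ ``commutes well enough'' with $(\varphi^t)^*$ does not address this.

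What you need is an a priori bound: the contribution of a $\sigma$-neighborhood of each boundary point of $\gamma$ is $O(\sigma)$ uniformly as $\epsilon\to0$. Then the interior pieces of a partition of unity each give $\frac{1}{2}c_{\gamma,0}f(T_\gamma)\int\rho(e^{sH_f}(y_0))\,ds$. They add up to $\frac{1}{2}c_{\gamma,0}T_\gamma f(T_\gamma)+O(\sigma)$, and you let $\sigma\to0$. This bound is the content of Lemma~\ref{lemma: local trace formula for reg billiard flow near glanc pt}, and near the grazing point it takes real work. Near-fixed pairs $(z,\varphi^t(z))$ lying on opposite sides of the grazing point must be shown to be confined to $|\eta_1|\lesssim\epsilon$, $|\eta_2|\lesssim\epsilon^2$ in the coordinates $\kappa_0$. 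The remaining pairs are pushed into the interior by the free flow $e^{\mp sH_f}$. They are then controlled by the uniform bound on $|\det(d\widetilde P-I)|^{-1}$ and the injectivity on each piece. A smaller related point: because the flow is discontinuous, your localization step must also exclude $\varphi^{t-0}(x)=x$, not only $\varphi^t(x)=x$.

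\textbf{What matches the paper.} Your interior computation is essentially the paper's Lemma~\ref{lemma: local trace formula for regularized billiard flow on fns}. That means the change of variables $\epsilon u'=\widetilde P(\xi')-\xi'$ on each piece $\psi(U_{\gamma,j})$, injectivity, asymptotically half-plane images, and $c_{\gamma,1}=0$. The $\frac{1}{2}$ comes from evenness of the limiting kernel; Lemma~\ref{lemma: int of kernel of regularized pull back in x} is what justifies replacing $E_\epsilon$ by your model convolution in the interior.

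\textbf{Your injectivity route.} It works, and is shorter than Lemma~\ref{lemma: injectivity for one glancing}, if you desingularize rather than only expand. Substitute $\eta_2=s^2$ and set $G(y_2,s)=\widetilde\Phi_0(y_2+2s,s^2)-(y_2,s^2)$. This map is smooth and agrees with $F_0$ at $s=0$. Also $\det dG(0)=-2\langle d\widetilde\Phi_0(0)\partial_{y_2},\partial_{\eta_2}\rangle\neq0$ by Lemma~\ref{lemma: no tang vec mapped to tang vec}. So $G$ is a local diffeomorphism, and this gives at once:
\begin{itemize}
\item injectivity of $F_1$ on $\overline{\kappa_0(U_1)}$;
\item the half-plane shape of its image;
\item $|\det dF_1|^{-1}=2s/|\det dG|\to0$.
\end{itemize}
An expansion with an $O(\eta_2)$ remainder does not by itself give injectivity unless you control its derivatives; the substitution is what supplies that control.
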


Intuitively, Theorem \ref{thm: trace formula near fixed closed traj with one grazing point} says that for each closed billiard trajectory $\gamma$ with one grazing point, any $x \in U_{\gamma,j}$ that is $\epsilon$ close to $P_{\gamma}(x)$ contributes roughly a multiple of $c_{\gamma,j}$ to the trace $\tr E_{\epsilon}(\varphi^t)^* E_{\epsilon}$. The Schwartz kernel of the regularizer $E_{\epsilon}$ depends more or less on the distance between its inputs, so this symmetry and $c_{\gamma,1} = 0$ explain the $\frac{1}{2}$ factor in the trace formula \eqref{eqn: trace formula billiard flow with one glanc point}. 

\bigskip
\noindent
\textbf{Organization of the paper.} \S \ref{sec: setup} introduces a manifold with concave boundary while \S \ref{sec: billiard flow and glan hypersurf} defines the billiard flow $\varphi^t$ and studies the Poincar\'e return map $P_{\gamma}$ of a closed trajectory $\gamma$ with $m$ ($m \geq 1$) grazing points (in the sense of Lemma \ref{lemma: Poinc return map for closed traj with m glancing pts}) using glancing hypersurfaces.

In  \S \ref{sec: non-deg assumption}, we discuss \textit{non-degeneracy} assumptions guaranteeing that the determinant $\det(I-d\widetilde P_{\gamma})$ does not vanish on a subset of full measure in $\psi(U_{\gamma})$ (we present the argument when $\gamma$ has $m$ grazing points). For a closed trajectory $\gamma$ with one grazing point, we prove that the map $\widetilde P_{\gamma} - I$ is injective, something we are unable to show in the case of multiple grazing points, therefore hindering us from proving Theorem \ref{thm: trace formula near fixed closed traj with one grazing point} when the closed trajectory $\gamma$ has more than one grazing point.  

We define in \S \ref{sec: regularized flat trace} the regularized flat trace of the billiard flow and prove an important lemma (Lemma \ref{lemma: local trace formula for regularized billiard flow on fns}) instrumental in the proof of Theorem \ref{thm: trace formula near fixed closed traj with one grazing point} presented in this same section. Finally, we end the paper by discussing in \S \ref{sec: difficulties for mult glancing points} the difficulties we face to upgrade Theorem \ref{thm: trace formula near fixed closed traj with one grazing point} to handle closed trajectories with multiple grazing points.

\noindent\textbf{Acknowledgements}. 
I would like to thank Semyon Dyatlov for invaluable discussions and Richard Melrose for discussions on glancing hypersurfaces. My deepest gratitude is also extended to Maciej Zworski for hosting me as a visiting researcher while working on some part of this project. This work was partially supported by the NSF CAREER grant DMS-1749858, NSF grant DMS-2400090, and Simons Foundation award SFI-MPS-SFM-00005584. 

\noindent
\textbf{Disclaimer on AI use:} Figures in this paper were created with the help of ChatGPT. 

\section{Setup}
\label{sec: setup}
Consider a smooth compact Riemannian manifold with boundary $(\Sigma,g)$. We can embed $(\Sigma,g)$ in a smooth Riemannian manifold $(\widetilde{\Sigma},g)$ without boundary, and we specify a boundary defining function $b \in C^{\infty}(\widetilde \Sigma)$ with the following properties:
\begin{equation}
    \label{eqn: bound def fn}
    \begin{split}
        \Sigma &= \{b \geq 0\}, \\
        b &> 0 \text{ on } \Sigma^{\circ},\\
        b &= 0 \text{ on } \partial \Sigma, \text{ and }\\
        db &\neq 0 \text{ on } \partial \Sigma.
    \end{split}
\end{equation}

Furthermore, we require the boundary $\partial \Sigma$ to be concave. In terms of the boundary defining function $b$, it means that the Hessian $\nabla^2 b$ satisfies for some constant $c>0$
\begin{equation}
    \label{eqn: concave boundary}
     \langle \nabla^2 b, v \otimes v \rangle_{g(p)} > c \| v \|^2_{g(p)} \text{ for all } (p,v) \in T(\partial \Sigma),\ v\neq 0
\end{equation}
where $\nabla$ is the Levi-Civita connection. 

\begin{Remark}
    In fact, because $\partial \Sigma$ is compact, one can show that there exists $\delta > 0$ such that  equation \eqref{eqn: concave boundary} holds for all $(p,v) \in T\widetilde{\Sigma}$ with $|b(p)| \leq \delta$ and $|db(v)| \leq \delta\|v \|_{g(p)}$.
\end{Remark}
 
The condition \eqref{eqn: concave boundary} implies that the second fundamental form
\begin{equation}
\label{eqn: 2nd fund form}
 \Pi (v,v)= -\frac{\langle \nabla^2 b, v \otimes v \rangle_{g(p)}}{\|\nabla b\|_{g(p)}^2} \nabla b 
\end{equation}
has a negative sign relative to $\nabla b$, which is inward pointing by Definition \eqref{eqn: bound def fn}. 

\section{Billiard Flow and Glancing Hypersurfaces}
\label{sec: billiard flow and glan hypersurf}
\subsection{Billiard Flow}
\label{subsec: sympl billiard flow}

Consider the tangent bundle  
\begin{equation}
\label{eqn: cotangent bundle with proj map}
    N = T \widetilde \Sigma \quad \text{with projection map} \quad \pi: N \rightarrow \widetilde \Sigma.
\end{equation}
We equip the tangent bundle $N$ defined by Equation \eqref{eqn: cotangent bundle with proj map} with the symplectic structure provided in Section \S1.3 of \cite{GP99}. Set 
\begin{equation}
    \label{eqn: f covector with norm 1}
    f(x) = \frac{1}{2} \left(\|  v \|_{g(p)}^2 -1\right), 
\end{equation}
where $x=(p,v) \in N$ and $g$ is the metric assigned to $\widetilde \Sigma$. Define the hypersurfaces 
\begin{equation}
    \label{eqn: hypersurfaces F and G}
    F = S\widetilde\Sigma = \{ f = 0 \} \quad \text{and} \quad G = \{ \pi^*b = 0 \},
\end{equation}
where
% $f$ is the function given by Equation \eqref{eqn: f covector with norm 1} and
$b$ is the boundary defining function for $\partial \Sigma$ given by Equation \eqref{eqn: bound def fn}.

\begin{lemm}
    \label{lemma: diff of def fn for F and G are lin. indep}
    % Suppose that the hypersurfaces $F = \{f = 0\}$ and $G = \{ \pi^*b =0 \}$ are defined by Equation \eqref{eqn: hypersurfaces F and G}. 
     If $x \in F \cap G$, then the differentials $df(x)$ and $d(\pi^*b)(x)$ are linearly independent.  
\end{lemm}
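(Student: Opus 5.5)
The plan is to use the splitting of $TN$ into horizontal and vertical subspaces given by the Levi-Civita connection, as in \S1.3 of \cite{GP99}. At any $x=(p,v)\in N$ we have $T_xN = H_x\oplus V_x$, where $V_x=\ker d\pi(x)$ is identified with $T_p\widetilde\Sigma$ through the fibre, and $d\pi(x)$ maps $H_x$ isomorphically onto $T_p\widetilde\Sigma$. It then suffices to produce one tangent vector on which $d(\pi^*b)(x)$ vanishes while $df(x)$ does not, and another on which $df(x)$ vanishes while $d(\pi^*b)(x)$ does not. Since both differentials are nonzero, such vectors rule out proportionality.

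The first step uses vertical vectors. For $\xi\in V_x$, identified with $w\in T_p\widetilde\Sigma$, we have $d(\pi^*b)(x)\xi = db(p)\,d\pi(x)\xi = 0$, because $d\pi(x)$ kills vertical vectors. On the other hand, differentiating along the fibre curve $s\mapsto (p,v+sw)$ gives
\[
df(x)\xi = \frac{d}{ds}\Big|_{s=0}\tfrac12\bigl(\|v+sw\|^2_{g(p)}-1\bigr) = \langle v,w\rangle_{g(p)}.
\]
Since $x\in F$ we have $\|v\|=1$, so $v\neq 0$. Choosing $w=v$ gives $df(x)\xi=1\neq 0$.

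The second step uses horizontal vectors. For $\eta\in H_x$ with $d\pi(x)\eta = u\in T_p\widetilde\Sigma$, take the horizontal lift: a curve $c(s)$ in $\widetilde\Sigma$ with $c'(0)=u$, together with the parallel transport $V(s)$ of $v$ along it. Parallel transport preserves $\|V(s)\|_{g(c(s))}$, so $df(x)\eta=0$. Meanwhile $d(\pi^*b)(x)\eta = db(p)u$. Because $x\in G$, the point $p$ lies on $\partial\Sigma$, and there $db(p)\neq 0$ by \eqref{eqn: bound def fn}. So we can choose $u$ with $db(p)u\neq 0$.

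Now suppose $\alpha\, df(x)+\beta\, d(\pi^*b)(x)=0$. Applying this to $\xi$ forces $\alpha=0$, and applying it to $\eta$ then forces $\beta=0$. Hence the two differentials are linearly independent.

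The only point requiring some care is the claim that $f$ is constant along horizontal curves, i.e.\ that the metric is parallel. In local coordinates this also follows from the formula
\[
df = \tfrac12\,\partial_{p}g_{ij}\,v^iv^j\,dp + g_{ij}v^j\,dv^i,
\]
so one could avoid the connection entirely. In that case the second step becomes: pick the tangent vector $(u,\dot v)$ with $\dot v$ chosen so that $\tfrac12\,\partial_u g(v,v)+g(v,\dot v)=0$, which is solvable because $v\neq 0$.
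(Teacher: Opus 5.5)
Your proposal is correct and follows essentially the paper's argument: vertical vectors (the paper's $\partial_{v_j}$) are killed by $d(\pi^*b)(x)$ but not by $df(x)$, which forces the coefficient of $df$ to vanish, and then $db(p)\neq 0$ on $\partial\Sigma$ forces the other coefficient to vanish. The fact that $df(x)\eta=0$ on horizontal vectors is true but not needed, since once $\alpha=0$ you only use $d(\pi^*b)(x)\eta\neq 0$.
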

\begin{proof}
    To see this, assume the contrary, that is there exist $x =(p,v)\in F \cap G$ and constants $c_1$ and $c_2$ not both zero such that $c_1df(x) = c_2 d(\pi^*b)(x)$. 
    %If we use the coordinate $(p,v)$ for $T \widetilde \Sigma$ at $x$, where here $p \in \widetilde \Sigma$ and $v \in T_p \widetilde \Sigma$,
    Then $0 = c_2 d(\pi^* b)(\partial_{v_j}) = c_1 df(\partial_{v_j})$, but since $df(\partial_{v_j}) \neq 0$ for some $j$, we get that $c_1 = 0$.
    % since both $c_1$ and $c_2$ cannot be zero, we have that $c_2 \neq 0$, and 
    Then $c_2\neq 0$ and thus $(\pi^*db)(x) = 0$, which gives a contradiction to the assumption \eqref{eqn: bound def fn} which says that $db \neq 0$ when $b=0$.
\end{proof}

% If $f$ is the function defined by Equation \eqref{eqn: f covector with norm 1}, then 
The Hamiltonian vector field $H_f$ restricted to the hypersuface $F = S \widetilde \Sigma$
% defined by Equation \eqref{eqn: hypersurfaces F and G}
is the generator of the geodesic flow \cite[\S1.3]{GP99}. In the coordinates $(p,v)$ for $N = T \widetilde \Sigma$, one has that the Hamiltonian vector field 
\begin{equation}
    \label{eqn: hamil vect field of p}
    H_{\pi^* b} = - \nabla b \cdot \partial_v,
\end{equation}
and consequently one can compute that
\begin{equation}
    \label{eqn: poisson bracket of f and p}
    H_{\pi^*b}f = - \sum_{j,l} g_{jl} (\nabla b)_j v_l  = -\left \langle \nabla b, v \right \rangle_{g(p)}  
\end{equation}
and that 
\begin{equation}
    \label{eqn: 2nd der of f wrt hamilt flow of p}
    H_{\pi^*b}^2 f = \| \nabla b \|_{g(p)}^2.
\end{equation}

 Since $H_f\pi^*b = - H_{\pi^*b}f$, we also have that 
\begin{equation}
    \label{eqn: 2nd der of p wrt hamil flow of f}
    \begin{split}
        H_f^2 \pi^*b &=  H_f\left(\left \langle \nabla b, v \right \rangle_{g} \right)  = \left \langle \nabla^2 b, v \otimes v \right \rangle_{g(p)}
    \end{split},
\end{equation}
where the last equality follows because $H_f$ is the generator of the geodesic flow.

Since the boundary $\partial\Sigma$ is concave, we see that any geodesic
starting at the boundary which points tangent to the boundary or inside $\Sigma$
will be in the interior of $\Sigma$ for small positive times:
%We now show the following: 
\begin{lemm}
    \label{lemma: uniform eps where hamil of f inside F and has not hit G}
    Let the hypersurfaces $F = \left\{ f = 0 \right\}$ and $G = \left\{ \pi^*b = 0 \right\}$ be given by Equation \eqref{eqn: hypersurfaces F and G}. Then, there exists $\epsilon_{0} >0$  such that $\pi^*b\left(e^{tH_f}(x)\right) > 0$ for all $t \in (0,\epsilon_0]$ and $x \in F \cap G \cap \{ H_f \pi^*b \geq 0 \}$.
\end{lemm}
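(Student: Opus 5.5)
The plan is a second-order Taylor expansion of $h(t) := \pi^*b(e^{tH_f}(x))$ in $t$, made uniform over the compact set
$$K := F \cap G \cap \{H_f \pi^*b \geq 0\}.$$
First, $K$ is compact. $G \cap F = \{(p,v) : p \in \partial\Sigma,\ \|v\|_{g(p)}=1\}$ is the unit sphere bundle over the compact boundary, and $K$ is a closed subset of it. Along the flow we have $h(0)=0$, $h'(0) = H_f\pi^*b(x) = \langle \nabla b, v\rangle_{g(p)} \geq 0$, and by \eqref{eqn: 2nd der of p wrt hamil flow of f}, $h''(t) = \langle \nabla^2 b, \dot\gamma(t)\otimes\dot\gamma(t)\rangle$, where $\gamma$ is the geodesic with $\gamma(0)=p$, $\dot\gamma(0)=v$. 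Since the flow is smooth and $K$ is compact, all derivatives of $h$ are bounded uniformly for $x \in K$ and $t\in[0,1]$. The argument splits according to whether $h'(0)$ is small or not.

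\emph{Near-tangential case.} Use the Remark after \eqref{eqn: concave boundary}: there is $\delta>0$ such that $\langle\nabla^2 b, w\otimes w\rangle > c\|w\|^2$ whenever $|b(q)|\le\delta$ and $|db(w)|\le \delta\|w\|$. Suppose $h'(0)\le \delta/2$. By uniform continuity there is $t_0>0$, independent of $x\in K$, such that for $t\in[0,t_0]$ we have $|b(\gamma(t))|\le\delta$ and $|db(\dot\gamma(t))| \le \delta$, with $\|\dot\gamma\|=1$. Then $h''(t) > c$ on $[0,t_0]$, so
$$h(t) \geq h'(0)t + \tfrac{c}{2}t^2 > 0 \quad\text{for } t\in(0,t_0].$$

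\emph{Transversal case.} Suppose $h'(0) \ge \delta/2$. Let $C$ be a uniform bound for $|h''|$. Then
$$h(t) \ge \tfrac{\delta}{2}t - \tfrac{C}{2}t^2 > 0 \quad\text{for } 0<t<\delta/C.$$

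Taking $\epsilon_0 = \min(t_0, \delta/(2C))$ covers both cases.

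The main obstacle is making sure the time $t_0$ in the first case is uniform in $x$. I would obtain it from compactness of $K$ and the uniform bounds $|\frac{d}{dt} b(\gamma(t))| \le \|\nabla b\|_\infty$ and $|\frac{d}{dt}\, db(\dot\gamma(t))| \le \|\nabla^2 b\|_\infty$ on a neighborhood of $\partial\Sigma$. These bounds give explicit control: the conditions hold for $t \le \delta/(2\max(\|\nabla b\|_\infty,\|\nabla^2 b\|_\infty))$, using that $|db(\dot\gamma(0))|\le\delta/2$.
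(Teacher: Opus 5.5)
Your proof is correct and takes essentially the same approach as the paper: a second-order Taylor expansion of $\pi^*b(e^{tH_f}(x))$, split into a transversal case where $H_f\pi^*b$ is bounded below and a tangential case where concavity makes $H_f^2\pi^*b$ positive. The only difference is in how uniformity is obtained. The paper argues locally at each point of $F\cap G\cap\{H_f\pi^*b\ge 0\}$, with the cases $H_f\pi^*b(x)>0$ and $H_f\pi^*b(x)=0$, and then gets a single $\epsilon_0$ from a finite subcover; you instead get explicit uniform constants directly from the Remark's $\delta$ and the threshold split $h'(0)\le\delta/2$ versus $h'(0)\ge\delta/2$.
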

\begin{proof}
    Suppose that $x \in F \cap G \cap \{ H_f \pi^*b \geq 0 \}$. We have that 
    \begin{equation}
    \label{eqn: second order approx of function pi b}
        \pi^*b \left(e^{t H_f}(x)\right) = H_f \pi^*b (x) t  + t^2 \int^{1}_0 (1-s)\left(H_f^2 \pi^*b\right)\left(e^{st H_f}(x)\right) ds. 
    \end{equation}
    Now we proceed with cases. 
    
    \noindent \textbf{Case 1: } Assume that $H_f\pi^*b (x) > 0$.
    
     It follows that there exist an open set $U \subset F \cap G$ containing $x$ and a constant $c>0$ such that $H_f \pi^*b (y) >c$ for all $y \in U$. The compactness of $F \cap \{\pi^*b \geq 0 \} = S \Sigma$ gives that $|H_f^2 \pi^*b|$ is bounded on $F \cap \{ \pi^*b \geq 0\}$, so using Equation \eqref{eqn: second order approx of function pi b}, we get $\epsilon_0 > 0$ such that for all $y \in U$ and $t \in (0,\epsilon_0]$
    $$
    \pi^*b\left(e^{tH_f}(y)\right) > 0.
    $$
    
    \noindent \textbf{Case 2: } Suppose that $H_f \pi^*b(x) = 0$.
    
    Because $H_f \pi^*b (x) = 0$, it follows that $x \in S\partial \Sigma$, and Equation \eqref{eqn: concave boundary} and Equation \eqref{eqn: 2nd der of p wrt hamil flow of f} show that $H_f^2 \pi^*b (x) > 0$. So, there exist an open subset $U \subset F$ containing $x$ and a constant $c>0$ such that $H_f^2 \pi^*b (y) > c$ for all $y \in U$. We then choose an open subset $\widetilde U \subset   U \cap G$ containing $x$ and $\epsilon_0 > 0$ such that $e^{tH_f}(y) \in U$ for all $y \in \widetilde U$ and $t \in (0,\epsilon_0]$. As a result, Equation \eqref{eqn: second order approx of function pi b} gives that 
    $$
    \pi^*b\left(e^{tH_f}(y)\right) > 0 \quad \text{for all} \quad y \in \widetilde U \cap \{H_f \pi^*b \geq 0\} \quad \text{and} \quad t \in (0, \epsilon_0].
    $$

    We have shown that for each $x \in F \cap G \cap \{H_f \pi^*b \geq 0\}$, there exist an open subset $U \subset F \cap G$ containing $x$ and $\epsilon_0 > 0$ such that $e^{tH_f}(y) >0$ for all $y \in U \cap \{H_f \pi^*b \geq 0\}$ and $t \in (0,\epsilon_0]$. So, the compactness of $F \cap G \cap \{H_f \pi^*b \geq 0\}$ finishes the proof.  
\end{proof}

Consider the hypersurfaces $F,G \subset T \widetilde \Sigma$ defined by Equation \eqref{eqn: hypersurfaces F and G}. Suppose that we look at the Hamilonian flow of $f$ on $F \cap \{ \pi^*b \geq 0\}=S\Sigma$, which is a manifold with boundary $F \cap G = F \cap \{ \pi^*b = 0\}=S_{\partial\Sigma}\widetilde\Sigma$. If we define the first boundary hitting time by 
\begin{equation}
    \label{def: 1st bound hitt time sympl}
    \begin{split}
        &\tau(x) = \inf \{t>0: e^{tH_f}(x) \in G \} \quad (\inf \emptyset = +\infty),\\
        &\text{where} \quad x \in F \cap \{ \pi^*b >0 \} \quad \text{or} \quad x \in F \cap G \cap \{ H_f \pi^*b \geq 0 \},
    \end{split}
\end{equation}
then Lemma \ref{lemma: uniform eps where hamil of f inside F and has not hit G} implies that:
\begin{corr}
    \label{cor: 1st bound hit time symp lower bound start F inter G}
    There exists $\epsilon_0 >0$ such that 
    \begin{equation}
        \label{eqn: lower bound on 1st bound hit time symp}
        \tau(x) > \epsilon_0 \quad \text{for all} \quad x \in F \cap G \cap \{H_f \pi^*b \geq 0 \},
    \end{equation}
    where $\tau$ is defined by Equation \eqref{def: 1st bound hitt time sympl}.
\end{corr}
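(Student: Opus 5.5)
The plan is to read the corollary off Lemma \ref{lemma: uniform eps where hamil of f inside F and has not hit G} directly, using the definition \eqref{def: 1st bound hitt time sympl} of $\tau$. Let $\epsilon_1>0$ be the constant produced by that lemma. Then for every $x \in F \cap G \cap \{H_f \pi^*b \geq 0\}$ we have $\pi^*b(e^{tH_f}(x))>0$ for all $t\in(0,\epsilon_1]$. In other words, $e^{tH_f}(x)\notin G$ for those $t$. Hence the set $\{t>0 : e^{tH_f}(x)\in G\}$ is contained in $(\epsilon_1,\infty)$, and so its infimum satisfies $\tau(x)\geq \epsilon_1$. This also covers the case where the set is empty, since then $\tau(x)=+\infty$.

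The only subtlety is that the corollary asks for a strict inequality, while the infimum of a subset of $(\epsilon_1,\infty)$ could a priori equal $\epsilon_1$. I would handle this in one of two ways.

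\emph{First option.} Set $\epsilon_0=\epsilon_1/2$. Then $\tau(x)\geq\epsilon_1>\epsilon_0$ for every such $x$, which is already enough.

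\emph{Second option.} Show that $\tau(x)>\epsilon_1$ directly. The set $\{t>0: e^{tH_f}(x)\in G\}=\{t>0 : \pi^*b(e^{tH_f}(x))=0\}$ is relatively closed in $(0,\infty)$ by continuity of the flow and of $\pi^*b$. So if $\tau(x)$ is finite and positive, the infimum is attained, which forces $\tau(x)>\epsilon_1$.

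The first option is simpler and suffices. The constant $\epsilon_0$ is uniform in $x$ because the constant in Lemma \ref{lemma: uniform eps where hamil of f inside F and has not hit G} is uniform; that uniformity came from the compactness argument in the lemma's proof. There is no real obstacle here beyond this bookkeeping about strict versus non-strict inequality.
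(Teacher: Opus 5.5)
Your proof is correct and follows the paper's route exactly: the paper states without further argument that the corollary follows from Lemma~\ref{lemma: uniform eps where hamil of f inside F and has not hit G} together with the definition of $\tau$. Your handling of the strict inequality fills in a detail the paper leaves implicit when it reuses the same $\epsilon_0$; either halving the constant or using that $\{t>0:\pi^*b(e^{tH_f}(x))=0\}$ is relatively closed in $(0,\infty)$ works.
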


Now, notice that for small times the Hamiltonian flow $e^{tH_f}$ starting at $x \in F \cap G \cap \{ H_f \pi^*b < 0\}$ leaves the manifold $F \cap \{ \pi^*b \geq 0 \}$ and moves to $F \cap \{ \pi^*b < 0 \}$. To define the billiard flow on $F \cap \{\pi^*b \geq 0\}$ which on $F \cap \{ \pi^*b > 0 \}$ is given by the Hamiltonian flow $e^{tH_f}$, we need to identify each $x \in F \cap G \cap \{ H_f \pi^*b < 0\}$ with a unique point $y \in F \cap G \cap \{ H_f \pi^*b \geq 0 \}$ because the Hamiltonian flow $e^{tH_f}$ starting at such a point $y$ stays inside $F\cap \{ \pi^*b \geq 0 \}$ for small times. We do this identification using the following reflection operation: If $x \in F \cap G \cap \left\{ H_f \pi^*b < 0 \right\}$ and $x = (p,v)$, where $p \in \partial\Sigma$ and $v \in T_p\Sigma$, then 
\begin{equation}
    \label{eqn: reflection operator}
    R(x) = R\left( (p,v) \right) = \left(p, v - 2 \left\langle v , \frac{\nabla b}{\| \nabla b \|_g^2} \right\rangle_g \nabla b \right).
\end{equation}

\begin{Remark}
    \label{rmk: refl oper same as moving along Ham flow of pi b}
    Equation \eqref{eqn: hamil vect field of p} says that the Hamiltonian vector field $H_{\pi^*b} = -\nabla b \cdot \partial_v$, so for any $x \in F \cap G \cap \{ H_f \pi^*b < 0 \}$, the definition of $R(x)$ provided by Equation \eqref{eqn: reflection operator} implies that $R(x)$ is the first intersection point of the Hamiltonian flow line $e^{tH_{\pi^*b}}(x)$  starting at $x$ ($t<0$) and the hypersurface $F$. In fact this intersection point exists because by Equation \eqref{eqn: 2nd der of f wrt hamilt flow of p}, $H_{\pi^*b}^2 f(y) = \| \nabla b \|_{g(\pi(y))}^2 > 0$ for all $y \in G$, where the last inequality follows since $db(\pi(y)) \neq 0$ as a consequence of Equation \eqref{eqn: bound def fn} which says that $b$ is a boundary definition function.   
\end{Remark}

We then define the map 
\begin{equation}
    \label{eqn: refl oper or identity}
     \begin{split}
        &S: F \cap \{ \pi^*b \geq 0 \} \rightarrow \left(F \cap \{ \pi^*b \geq 0 \}\right) \setminus \left( F \cap G \cap \{H_f \pi^*b < 0 \} \right) \quad \text{by}\\
        &S(x) = \begin{cases}
        R(x) &\text{if} \quad x \in F \cap G \cap \{ H_f \pi^*b < 0 \}\\
        x &\text{else} 
    \end{cases}
     \end{split}
\end{equation}
where $R$ is given by Equation \eqref{eqn: reflection operator}.
Finally, we can define the billiard flow on $S\Sigma$ 
\begin{equation}
    \label{def: sympl billiard flow for all positive times}
    \begin{split}
        &\varphi^t: F \cap \{\pi^*b \geq 0\} \rightarrow F \cap \{ \pi^*b \geq 0\} \quad \text{by}\\
        &\varphi^t(x) = \begin{cases}
            e^{tH_f}(S(x)) &\text{if} \quad 0 \leq t < \tau(S(x)) \\
            \varphi^{t-\tau(S(x))} 
            %\left( \lim_{\epsilon \rightarrow 0^+} \varphi^{\tau(S(x))}(x)\right)
            (e^{\tau(S(x)) H_f}(S(x)))
             &\text{if} \quad \tau(S(x)) \leq t < + \infty
        \end{cases}
    \end{split}
\end{equation}
where $S$ is defined by Equation \eqref{eqn: refl oper or identity} and $\tau$ by expression \eqref{def: 1st bound hitt time sympl}. See Figure \ref{fig: billiard traj} for an illustration of the billiard flow $\varphi^t$. 

\begin{Remark}
\label{rmk: sympl billiard flow}
    The flow $\varphi^t$ defined by Equation \eqref{def: sympl billiard flow for all positive times} is nothing but the geodesic flow  $e^{tH_f}$ for $x \in F \ \cap \{ \pi^*b > 0 \} = S \Sigma^{\circ}$, where $\Sigma^{\circ}$ is the interior of $\Sigma$. This flow $\varphi^t$ is discontinuous for any $\tilde t$ such that $\varphi^{\tilde t}(x) \in F \cap G \cap \{ H_{f} \pi^*b > 0 \}$, and the right limit $\lim_{\epsilon \rightarrow 0^+} \varphi^{\tilde t + \epsilon}(x) = \varphi^{\tilde t}(x)$ is uniquely determined from the left limit $\lim_{\epsilon \rightarrow 0^+} \varphi^{\tilde t - \epsilon}(x) \in F \cap G \cap \{ H_f \pi^*b < 0\}$ by following the Hamiltonian flow of $H_{\pi^*b}$ until we intersect again the hypersurface $F$ (refer to Remark \ref{rmk: refl oper same as moving along Ham flow of pi b} for more details). 
\end{Remark}
%\textcolor{red}{Maybe consider adding a figure of $\varphi^t$.}

\begin{figure}[ht]
    \centering
    \includegraphics[width=0.6\linewidth]{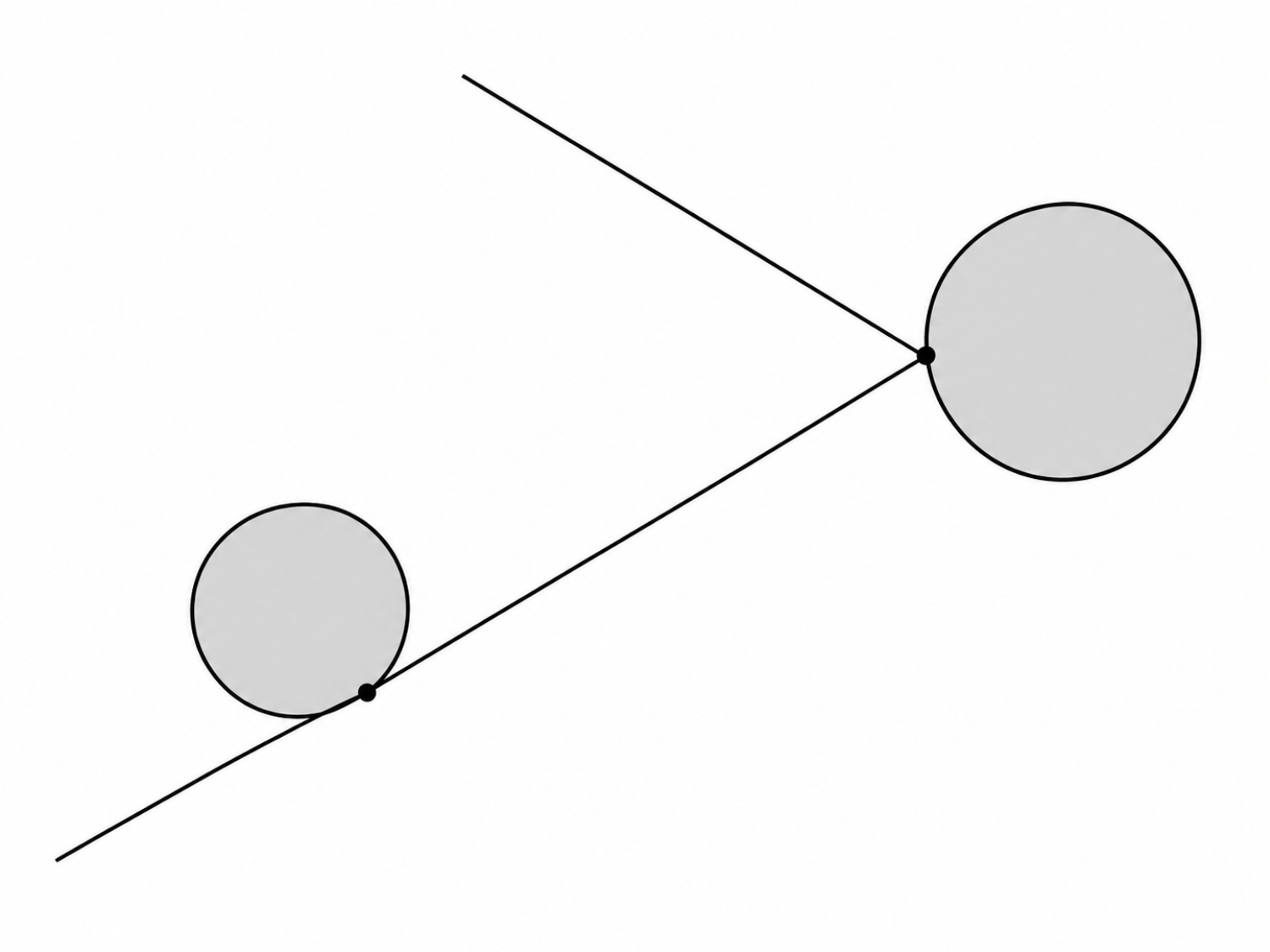}
    \caption{A billiard trajectory with a glancing and a transversal intersection.}
    \label{fig: billiard traj}
\end{figure}

\subsection{Glancing Hypersurfaces and Billiard Flow}
\label{subsec: glanc hypersurf and billiard flow}

Let $(N,\sigma)$ be a symplectic manifold of dimension $2n$, and $F$ and $G$ hypersurfaces in $N$. We recall the definition of a glancing hypersurface \cite[Definition~21.4.6]{H07}:
\begin{defi}
\label{def: glancing hypersurfaces}
    Two hypersurfarces $F , G \subset N$ are said to be glancing at $x \in F \cap G$ if for any defining functions $f$ and $g$ for $F$ and $G$, respectively, the following holds:
    \begin{enumerate}
        \item $df(x)$ and $dg(x)$ are linearly independent,
        \item $H_f g(x) = 0$, where $H_f$ is the Hamiltonian vector field of $f$, and
        \item $H_f^2 g(x) \neq 0$ and $H_g^2 f (x) \neq 0$, where $H_g$ is the Hamiltonian vector field of $g$.
    \end{enumerate}
\end{defi}

Melrose \cite{M76} shows that any two pairs of glancing hypersurfaces are locally equivalent in the following sense. 
\begin{theo}
    \label{thm: equiv of glancing hypersurfaces}
    Let $(N_1, \sigma_1)$ and $(N_2, \sigma_2)$ be symplectic manifolds of dimension $2n$, and $F_1, G_1 \subset N_1$ and $F_2, G_2 \subset N_2$ glancing hypersurfaces at $x_1 \in F_1 \cap G_1$ and $x_2 \in F_2 \cap G_2$, respectively. Then there exist open neighborhoods $U_1 \subset N_1$ and $U_2 \subset N_2$ of $x_1$ and $x_2$, respectively, and a symplectomorphism $\kappa: U_1 \rightarrow U_2$ such that $\kappa(x_1) = x_2$, $\kappa(F_1 \cap U_1) = F_2\cap U_2$, and $\kappa(G_1\cap U_1) = G_2 \cap U_2$.  
\end{theo}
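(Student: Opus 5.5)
The plan is to reduce both pairs to a single normal form and compose. It suffices to show that any glancing pair $(F,G)$ at $x\in F\cap G$ in a $2n$-dimensional symplectic manifold is locally symplectomorphic to the model pair in $T^*\mathbb R^n$ with coordinates $(y,\eta)$, $y=(y_1,y')$:
\[
F_0=\{\eta_1=0\},\qquad G_0=\{y_1=\eta_1^2+\eta_n\},
\]
or an equivalent Melrose normal form $f_0=\xi_1$, $g_0=x_1-\xi_1^2-x_n$ (up to signs). Once each pair is mapped to the model at the origin, the required $\kappa$ is $\kappa_2^{-1}\circ\kappa_1$. The sign ambiguities in $H_f^2g$ and $H_g^2f$ are removed by replacing $f$ by $-f$ or $g$ by $-g$, which does not change $F$ or $G$.

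\textbf{Step 1.} By Darboux and the linear independence of $df,dg$, straighten $F$. Since $df\ne0$, choose symplectic coordinates with $f=\eta_1$, so $H_f=\partial_{y_1}$. Next, $H_fg=0$ and $H_f^2g\neq0$ at $x$ mean that $g$ restricted to each $H_f$-orbit has a nondegenerate critical point. By the Malgrange preparation theorem,
\[
g = e\,(y_1^2 + a(y',\eta)\,y_1 + b(y',\eta)), \qquad e\ne0 .
\]
Completing the square by a translation in $y_1$ depending on $(y',\eta)$, realized by a symplectic change generated by a function of $(y',\eta)$, I reduce to $G=\{y_1^2=\beta(y',\eta_1,\eta')\}$. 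The condition $H_g^2f\ne0$ then translates to $\partial_{\eta'}\beta$ or $\partial_{y'}\beta$ being nonzero transversally to $F$, so one more symplectic change in $(y',\eta')$ makes $\beta$ a coordinate. This gives the Whitney-fold-type normal form for $F$ together with the trace of $G$ on $F$.

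\textbf{Step 2 (the main obstacle).} A pair of hypersurfaces carries more invariant structure than a single hypersurface. Concretely, the two involutions on $F\cap G$ induced by the characteristic foliations of $F$ and of $G$ (the folds) must be simultaneously normalized. This is Melrose's key step. Following \cite{M76}, I would reduce the problem to the equivalence of pairs of Whitney folds $\pi_F:F\to F/\!\sim$, $\pi_G:G\to G/\!\sim$ compatible with the induced symplectic structures on the quotients $F/\!\sim$ and $G/\!\sim$. I would then solve a homological equation for the generating function of the conjugating map. In the $C^\infty$ category one first matches formal Taylor series at $F\cap G$, then removes the flat error by a deformation (Moser-type) argument, using the fold structure to solve $\{\cdot,\cdot\}$-equations with smooth right-hand sides.

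\textbf{Step 3.} Lift the equivalence of the quotient data to a symplectomorphism of neighborhoods preserving $F$ and $G$. Use that a canonical transformation is determined by its action on the reduced spaces together with the flow-out of $H_f$ from $F$, and verify that $\kappa(x_1)=x_2$ by translating the model so that the base points coincide.

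I expect Step 2 — showing that the gluing invariant of the two folds is trivial in $C^\infty$ — to be the genuinely hard part. It is where the glancing (rather than merely transverse) hypothesis and the nondegeneracy $H_g^2f\neq0$ are essential.
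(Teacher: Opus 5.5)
The paper does not prove this theorem; it quotes it from Melrose \cite{M76}. Your overall plan (normalize $F$ and the trace $J=F\cap G$, then normalize the two folds/involutions on $J$, then lift) has the right shape. But your outline ends up resting on the same citation: Step 2, which you correctly identify as the whole difficulty, reads ``following \cite{M76}, I would\dots''. Read as a proof, that is where the gap is.

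What has to be proved is a $C^\infty$ normal form for the two involutions $i_F,i_G$ of $J$. These are defined by the folds $\pi_F|_J\colon J\to F/\!\sim$ and $\pi_G|_J\colon J\to G/\!\sim$. The quotient maps on $F$ and $G$ themselves are fibrations, not folds.
\begin{itemize}
\item Both involutions preserve $\sigma|_J$ and have the \emph{same} fixed hypersurface $K=J\cap\{H_fg=0\}$, with $-1$-eigenlines spanned by $H_f$ and $H_g$.
\item Hence $T=i_G\circ i_F$ fixes $K$ pointwise and is a transvection to first order there.
\item After Step 1, one must conjugate $T$ to the model $T_0(y_n,\eta_1)=(y_n-2\eta_1,\eta_1)$, obtained from $i_F^0\colon\eta_1\mapsto-\eta_1$ and \eqref{eqn: reflec opera in sympl coord}. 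The conjugating map must commute with $i_F$ and respect $\sigma|_J$.
\item The linearized conjugacy equation is a difference equation $u\circ T-u=h$ along $T$-orbits, whose step size tends to $0$ at $K$. It is not a Poisson-bracket equation solvable for arbitrary smooth right-hand sides.
\item One solves it formally in Taylor series at $K$ (not at $F\cap G$). One then absorbs a remainder flat at $K$ by a fundamental-domain construction, using that $T$ has no fixed points off $K$.
\end{itemize}
This flat-remainder step is where $C^\infty$ is essential; the real-analytic analogue of the theorem is false. Your Step 2 contains no version of it.

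The parts you do write out also need correction.
\begin{itemize}
\item \textbf{The model pair is not glancing.} With your own convention $H_{\eta_1}=\partial_{y_1}$, one has $H_{\eta_1}(y_1-\eta_1^2-\eta_n)=1\neq0$. The same holds for $f_0=\xi_1$, $g_0=x_1-\xi_1^2-x_n$. Since glancing is a symplectic invariant, nothing glancing maps onto these pairs. The correct target is $F_0=\{y_1=0\}$, $G_0=\{y_1=\eta_1^2-\eta_n\}$ as in \eqref{eqn: friedlander model}. In the coordinates of your Step 1, it is $F_0=\{\eta_1=0\}$, $G_0=\{y_1^2=\eta_1+\eta_n\}$.
\item \textbf{Step 1 misreads the hypothesis.} With $f=\eta_1$ and $g=y_1^2-\beta(y',\eta)$, one computes $H_g^2f=2\partial_{\eta_1}\beta$. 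So $H_g^2f\neq0$ means $\partial_{\eta_1}\beta(0)\neq0$. The condition $d_{(y',\eta')}\beta(0)\neq0$ that you use is the independence of $df$ and $dg$. It only lets you normalize $\beta|_{\eta_1=0}$, i.e.\ $J$ and $i_F$. An $\eta_1$-dependent symplectic change in $(y',\eta')$ forces a compensating $y_1$-shift, which undoes the completed square.
\item \textbf{Step 3 is not yet an argument.} $H_f$ is tangent to $F$, so its flow-out from $F$ is just $F$. The $H_g$-flow-out of $J$ covers only the part of $G$ whose characteristics cross $F$. Because $f$ has a nondegenerate critical point along each $H_g$-curve near $x$, an open set of these curves never meets $F$. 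So that part of $G$ is not determined by data on $J$, and an extra step is needed. One option is to extend the induced symplectomorphism of $G/\!\sim$ across the boundary of the half-space $\pi_G(J)$ before lifting.
\end{itemize}
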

As a consequence of Theorem \ref{thm: equiv of glancing hypersurfaces}, one has that given a symplectic manifold $(N, \sigma)$ of dimension $2n$ and glancing hypersurfaces $F$ and $G$ at $x \in F \cap G$, we obtain neighborhoods $U \subset N$ and $V \subset T^* \mathbb R^n$ of $x$ and $0$, respectively, and a symplectomorpshim  
\begin{equation}
    \label{eqn: friedlander model}
    \begin{split}
        &\kappa: U \rightarrow V \quad \text{such that}\\
        &\kappa(F \cap U) = \left\{ y_1 = 0 \right\} \cap V \quad \text{and} \quad \kappa(G\cap U) = \{ \eta_1^2 - y_1 - \eta_n = 0\} \cap V,
    \end{split}
\end{equation}
where $(y, \eta) \in T^* \mathbb R^n$. In fact, one can check that the hypersurfaces $\{y_1 = 0 \}$ and $\{ \eta_1^2 - y_1 -\eta_n = 0 \}$ are glancing at $0$.

On the manifold with boundary $\{y_1 = 0\} \cap \left\{ \eta_1^2 - y_1 -\eta_n \geq 0 \right\}$, we define the billiard flow $\Phi^t$ which is the Hamiltonian flow $e^{tH_{y_1}}$ in the interior $\{y_1 = 0\} \cap \{ \eta_1^2 - y_1 -\eta_n > 0 \}$ and obeys elastic reflections on the boundary $\{y_1 = 0\} \cap \left\{ \eta_1^2 - y_1 -\eta_n =0 \right\}$, that is we obtain the outgoing point from the incoming one by following the Hamiltonian flow of $\eta_1^2 - y_1 -\eta_n$ until we reach again the hypersurface $\{y_1 = 0 \}$. Explicitly, for $t\geq 0$ the billiard flow
\begin{equation}
    \label{def: symplectic billiard flow}
    \begin{split}
        &\Phi^t: \{y_1 = 0\} \cap \left\{ \eta_1^2 - y_1 -\eta_n \geq 0 \right\} \rightarrow \left\{y_1 = 0\right\} \cap \left\{ \eta_1^2 - y_1 -\eta_n \geq 0 \right\} \quad \text{is given by}\\
        &\Phi^t(y, \eta)\\
        &= \begin{cases}
        \left(0,y', y_n + 2 \sqrt{\eta_n}, \eta_1 - t - 2 \sqrt{\eta_n}, \eta', \eta_n\right) &\text{if} \quad \eta_n > 0 \quad \text{and} \quad t \geq  \eta_1 - \sqrt{\eta_n} \geq 0\\
        \left(0 , y', y_n, \eta_1 - t, \eta ', \eta_n\right) &\text{else} 
    \end{cases},
    \end{split}
\end{equation}
where $y = (y_1, y',y_n)$ and $\eta = (\eta_1, \eta', \eta_n)$. Figure \ref{fig:fried model} illustrates the flow $\Phi^t$. 
\begin{figure}[ht]
    \centering
    \includegraphics[width=0.75\linewidth]{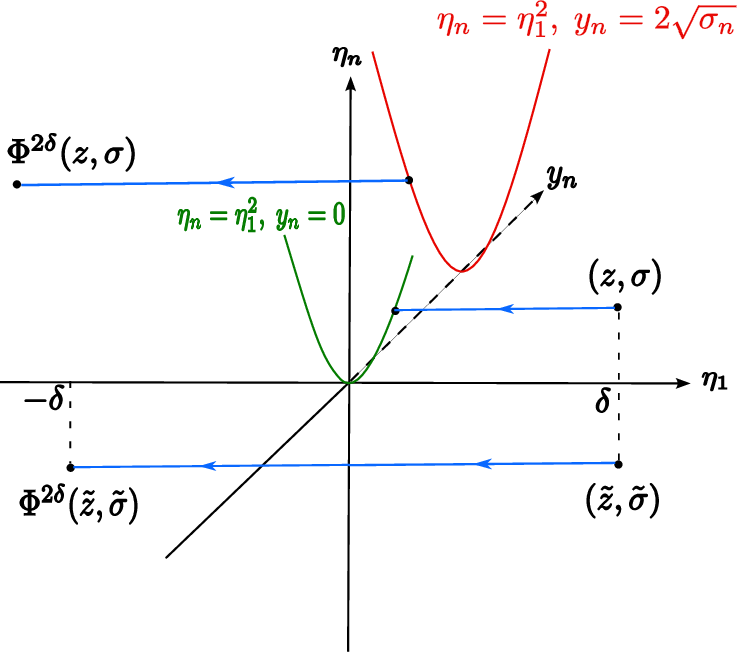}
    \caption{The flow $\Phi^t$ defined by Equation \eqref{def: symplectic billiard flow} on the manifold $\{y_1 = 0 \} \subset T^* \mathbb R^n$. Here, we only display the $\eta_1$, $\eta_n$, and $y_n$ axes.}
    \label{fig:fried model}
\end{figure}

\begin{Remark}
\label{rmk: symplectic billiard flow fried model}
    To digest further what this billiard flow $\Phi^t$ defined by Equation \eqref{def: symplectic billiard flow} is, notice that in the case when $\eta_n \leq 0$, the flow $\Phi^t$ is nothing but the Hamiltonial flow of $y_1$. When $\eta_n > 0$, the flow $\Phi^t$ is given by the Hamiltonian flow of $y_1$, but with a discontinuity. Precisely, if $t_0 = \eta_1 - \sqrt{\eta_n}$, then $\Phi^t(y,\eta)$ is the Hamiltonian flow of $y_1$ for times $t \in [0,t_0)$ and $t \geq t_0$, with a discontinuity at $t_0$. We obtain the right limit $ \Phi^{t_0}(y, \eta) \in \{y_1 = 0\} \cap \{\eta_1^2 - y_1 - \eta_n = 0 \}$ from the left limit $\lim_{\epsilon \rightarrow 0^+} \Phi^{t_0-\epsilon}(y,\eta) \in \{y_1 = 0\} \cap \{\eta_1^2 - y_1 - \eta_n = 0 \}$ by following the Hamiltonian flow of $\eta_1^2 - y_1 - \eta_n$ until we reach again the hypersurface $\{y_1=0\}$, which happens after time $\tau = -2 \sqrt{\eta_n}$. Moving along the Hamiltonian flow of $\eta_1^2 - y_1 - \eta_n$ results in shifting $y_n$ by $2 \sqrt{\eta_n}$ as seen in the definition of $\Phi^t$ provided in Equation \eqref{def: symplectic billiard flow}.
\end{Remark}

\begin{comment}
    One can see that 
\begin{equation}
    \label{eqn: symplectic flow for t= 2eps}
    \begin{split}
        \Phi^{2 \delta}&(x_2, \cdots, x_n,  \xi_1 = \delta, \cdots, \xi_n) \\
        &= \begin{cases}
        (x_2, \cdots, x_n, -\delta, \cdots ,\xi_n) &\text{if} \quad \xi_n \leq 0 \\
        e^{2 \sqrt{\xi_n} \left(-\partial_{\xi_1}\right)}\left(x_2, \cdots, x_n + 2\sqrt{\xi_n}, -\delta , \cdots, \xi_n\right) &\text{if} \quad 0 < \xi_n < \delta^2
    \end{cases},
    \end{split}
\end{equation}
where $e^{t \left(-\partial_{\xi_1}\right)}$ is the Hamiltonian flow of $x_1$. 
\end{comment}

Near glancing points of the hypersurfaces $F$ and $G$ from Section \S \ref{subsec: sympl billiard flow}, we will relate the billiard flow $\varphi^t$ given by Definition \ref{def: sympl billiard flow for all positive times} to the flow $\Phi^t$ . But before, we show the following: 
\begin{lemm}
    \label{lemma: cosphere bundle and boundary can be glanc}
    Consider the symplectic manifold $N = T \widetilde \Sigma$  and the hypersurfaces $F,G \subset N$ given by Equation \eqref{eqn: hypersurfaces F and G}. If $x_0 \in F \cap G \cap \{ H_f \pi^*b = 0\} = S\partial \Sigma$, then $F$ and $G$ are glancing at $x_0$, as defined by Definition \ref{def: glancing hypersurfaces}.
\end{lemm}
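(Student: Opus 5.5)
We verify the three conditions of Definition~\ref{def: glancing hypersurfaces} for the defining functions $f$ and $\pi^*b$ of $F$ and $G$ from Equation~\eqref{eqn: hypersurfaces F and G}. We then argue that they hold for any other pair of defining functions.

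First, condition (1) is exactly Lemma~\ref{lemma: diff of def fn for F and G are lin. indep}, since $x_0 \in F \cap G$.

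Second, condition (2) holds by the choice of $x_0$. The identity $H_f \pi^*b = -H_{\pi^*b} f = \langle \nabla b, v\rangle_{g(p)}$ comes from Equation~\eqref{eqn: poisson bracket of f and p}, and it vanishes at $x_0 = (p,v)$ because $x_0 \in \{H_f\pi^*b = 0\}$. This says precisely that $v \in T_p\partial\Sigma$, i.e. $x_0 \in S\partial\Sigma$.

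Third, for condition (3) we use the two second-derivative formulas:
\begin{itemize}
\item By Equation~\eqref{eqn: 2nd der of f wrt hamilt flow of p}, $H_{\pi^*b}^2 f(x_0) = \|\nabla b\|^2_{g(p)}$. This is nonzero because $db(p) \neq 0$ on $\partial\Sigma$ by \eqref{eqn: bound def fn}.
\item By Equation~\eqref{eqn: 2nd der of p wrt hamil flow of f}, $H_f^2\pi^*b(x_0) = \langle \nabla^2 b, v\otimes v\rangle_{g(p)}$. Since $(p,v) \in T(\partial\Sigma)$ with $\|v\|_g = 1$, the concavity assumption \eqref{eqn: concave boundary} gives $H_f^2\pi^*b(x_0) > c > 0$.
\end{itemize}

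The remaining point, and the only mildly delicate one, is that Definition~\ref{def: glancing hypersurfaces} quantifies over \emph{any} defining functions. Let $\tilde f = a f$ and $\tilde g = a' \pi^*b$ with $a, a'$ smooth and nonvanishing near $x_0$.
\begin{itemize}
\item Condition (1) is unaffected, since on $F \cap G$ we have $d\tilde f = a\,df$ and $d\tilde g = a'\,d\pi^*b$.
\item For condition (2), $H_{\tilde f} = aH_f + fH_a$, so on $F$ we get $H_{\tilde f}\tilde g = a H_f(a'\pi^*b)$, which on $G$ equals $aa' H_f\pi^*b$ and hence vanishes at $x_0$.
\item For condition (3), expand $H_{\tilde f}^2 \tilde g$ at $x_0$. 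Every term containing a factor $f$, $\pi^*b$ or $H_f\pi^*b$ vanishes there. The surviving term is $a^2 a' H_f^2\pi^*b(x_0) \neq 0$.
\item The same computation with the roles of $f$ and $\pi^*b$ exchanged gives $H_{\tilde g}^2\tilde f(x_0) = a'^2 a\, H_{\pi^*b}^2 f(x_0) \neq 0$.
\end{itemize}
This bookkeeping of the vanishing terms is routine but is the step requiring care. With it, $F$ and $G$ are glancing at $x_0$.
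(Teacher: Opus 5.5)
Your proposal is correct and follows the paper's proof. The paper likewise checks conditions (1)--(3) for the specific defining functions $f$ and $\pi^*b$, using Lemma~\ref{lemma: diff of def fn for F and G are lin. indep}, Equations~\eqref{eqn: 2nd der of f wrt hamilt flow of p} and~\eqref{eqn: 2nd der of p wrt hamil flow of f}, and the concavity condition~\eqref{eqn: concave boundary}.

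Your extra verification that the conditions do not depend on the choice of defining functions is also correct; the paper omits this step. One small bookkeeping point: when you expand $H_{\tilde f}^2\tilde g(x_0)$, the term $(H_f f)(H_a\tilde g)$ vanishes because $H_f f=\{f,f\}=0$ identically, not because it contains one of the factors you listed. The same applies to $H_{\pi^*b}\pi^*b=0$ in the reverse computation.
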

\begin{proof}
    Lemma \ref{lemma: diff of def fn for F and G are lin. indep} gives that $df(x_0)$ and $d(\pi^*b)(x_0)$ are linearly independent, and Equation \eqref{eqn: 2nd der of f wrt hamilt flow of p} implies that $H_{\pi^*b}^2 f(x_0) >0$ because $\nabla b (\pi(x_0)) \neq 0$ as a consequence of Equation \eqref{eqn: bound def fn} which says that $b$ is a boundary defining function for $\partial \Sigma$. Additionally, because $H_f \pi^*b(x_0) = 0$, equations \eqref{eqn: concave boundary} and \eqref{eqn: 2nd der of p wrt hamil flow of f} give that $H_f^2 \pi^*b(x_0) > 0$.
\end{proof}

We then use Theorem \ref{thm: equiv of glancing hypersurfaces} on the equivalence of glancing hypersurfaces to get that:
\begin{lemm}
    \label{lemma: relating billiard flow with flow from fried model}
    If $x_0 \in F \cap G \cap \{ H_f \pi^*b = 0 \}$, then there exists a symplectomorphism $\kappa_0: U \subset T \widetilde \Sigma \rightarrow V \subset T^* \mathbb R^{n} \quad (n=\dim \Sigma)$ specified by Equation \eqref{eqn: friedlander model} and $\delta > 0$ such that for all $(t,x) \in (0,\delta) \times \left(F \cap \{\pi^*b \geq 0\} \cap U\right)$
    \begin{equation}
        \label{eqn: relating billiard flow with flow from fried model}
        \begin{split}
            &\kappa_0^{-1} \circ e^{tH_{y_1}}\circ \kappa_0(x) = e^{\alpha(t,x)H_f}(x)  \quad \text{and} \quad \kappa_0^{-1} \circ \Phi^t \circ \kappa_0(x) = \varphi^{\beta(t,x)}(x) ,
        \end{split}
    \end{equation}
    where $\varphi^t$ is the billiard flow defined by Definition \ref{def: sympl billiard flow for all positive times}, 
    \begin{equation}
        \label{eqn: time shift when relating billiard flow and flow from fried model}
        \beta(t,x) = \begin{cases}
             \tau(x) + \alpha \left(t-t_0(x),\varphi^{\tau(x)}(x)\right)&\text{if} \quad \eta_n( \kappa_0(x)) > 0, \; t \geq  t_0(x) > 0 \\
             \alpha(t,\varphi^0(x)) &\text{if} \quad \eta_n( \kappa_0(x)) > 0, \; t \geq  t_0(x) = 0\\ 
            \alpha(t,x) &\text{else} 
        \end{cases},
    \end{equation}
    $t_0(x) = \eta_1( \kappa_0(x) ) - \sqrt{\eta_n (\kappa_0(x))}$,  $\alpha(t,x)$ is a smooth function
    such that $\alpha(0,x)=0$ and $\partial_t\alpha(t,x)>0$, and the first hitting time $\tau$ defined by Equation \eqref{def: 1st bound hitt time sympl} is given by $\tau(x)=\alpha(t_0(x),x)$ when $\eta_n(\kappa_0(x)) > 0$ and $t_0(x) > 0$. Moreover, if $R$ is the reflection operator defined in Equation \eqref{eqn: reflection operator}, then for any $(y,\eta) \in \kappa_0 (F \cap \{ \pi^*b = 0\} \cap \{ H_f \pi^*b < 0\} \cap U)$ we have
    \begin{equation}
        \label{eqn: reflec opera in sympl coord}
        \begin{split}
            &(y,\eta) = (0,y',y_n,\sqrt{\eta_n},\eta',\eta_n), \quad \text{with} \quad \eta_n > 0, \quad \text{and}\\
            &\kappa_0 \circ R \circ \kappa_0^{-1}(0,y',y_n,\sqrt{\eta_n},\eta',\eta_n) = (0,y',y_n + 2 \sqrt{\eta_n},-\sqrt{\eta_n},\eta',\eta_n).
        \end{split}
    \end{equation}
    %\textcolor{red}{(figure out if $\alpha>0$ or $\alpha < 0$ because the billiard flow $\varphi^t$ is defined for only positive times)}.
\end{lemm}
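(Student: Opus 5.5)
By Lemma \ref{lemma: cosphere bundle and boundary can be glanc}, $F$ and $G$ are glancing at $x_0$. Theorem \ref{thm: equiv of glancing hypersurfaces} then gives the symplectomorphism $\kappa_0$ of \eqref{eqn: friedlander model}, with $\kappa_0(F\cap U)=\{y_1=0\}$ and $\kappa_0(G\cap U)=\{\eta_1^2-y_1-\eta_n=0\}$. Composing with the symplectic involution $(y,\eta)\mapsto(-y,-\eta)$ would change the model, so instead I fix orientation directly. We have $\kappa_0(\{\pi^*b\ge 0\}\cap U)=\{\eta_1^2-y_1-\eta_n\ge 0\}$ or its complement, and the correct one is selected by the sign at interior points. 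The defining functions then satisfy $y_1\circ\kappa_0 = a f$ and $(\eta_1^2-y_1-\eta_n)\circ\kappa_0 = c\,\pi^*b$ with $a,c$ smooth and nonvanishing near $x_0$; shrinking $U$, we may take $c>0$.

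\textbf{Step 1: relating the Hamiltonian flows.} On $F=\{f=0\}$ we have $H_{af}=aH_f$, and since $\kappa_0$ is symplectic it pushes $H_{y_1\circ\kappa_0}$ to $H_{y_1}$. Hence the orbits of $e^{tH_{y_1}}$ in $\{y_1=0\}$ are reparametrized orbits of $e^{tH_f}$ in $F$. I define $\alpha(t,x)$ as the solution of $\partial_t\alpha = a(e^{\alpha H_f}x)$ with $\alpha(0,x)=0$. Positivity of $\partial_t\alpha$ requires $a>0$. If $a<0$, I would replace $\kappa_0$ by its composition with the symplectic map $(y_1,\dots,y_n,\eta_1,\dots,\eta_n)\mapsto(-y_1,\dots,-y_n,-\eta_1,\dots,-\eta_n)$ followed by a check that the model form can be restored. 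The cleaner route is a direct sign check: $H_{y_1}(\eta_1^2-y_1-\eta_n)=-2\eta_1$ and $H_{y_1}^2(\cdot)=2>0$, while $H_f^2\pi^*b>0$ at $x_0$. Comparing second derivatives along the flow gives $c\,a^2\,H_f^2\pi^*b = 2$, consistent with $c>0$. The first derivatives, however, only determine the direction of time up to the sign of $a$. I expect the real argument to be that both flows exit the region in the same direction, or else to modify $\kappa_0$ by the time-reversal antisymplectic issue; this orientation matching is the main obstacle I anticipate. Smoothness of $\alpha$ follows from ODE theory, and $\delta$ is chosen so that all flows stay in $U$.

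\textbf{Step 2: boundary behaviour.} The model boundary point $\{\eta_1^2-\eta_n=0,\ y_1=0\}$ with incoming direction corresponds, via $c>0$ and Step 1, to $G\cap F\cap\{H_f\pi^*b<0\}$. Here $H_{y_1}(\eta_1^2-y_1-\eta_n)=-2\eta_1<0$, which means $\eta_1=\sqrt{\eta_n}>0$, proving the first line of \eqref{eqn: reflec opera in sympl coord}. For the reflection, Remark \ref{rmk: refl oper same as moving along Ham flow of pi b} says $R(x)$ is the first point where the $H_{\pi^*b}$-orbit meets $F$ again. Since $H_{c\pi^*b}=cH_{\pi^*b}$ on $G$, the $\kappa_0$-image of this orbit is the $H_{\eta_1^2-y_1-\eta_n}$-orbit. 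That orbit is $(y_1+2\eta_1 s,\ y',\ y_n-s,\ \eta_1+s,\ \eta',\ \eta_n)$ up to signs. It returns to $y_1=0$ when $\eta_1$ goes from $\sqrt{\eta_n}$ to $-\sqrt{\eta_n}$, i.e. after $s=-2\sqrt{\eta_n}$, which shifts $y_n$ by $2\sqrt{\eta_n}$. This gives the second line of \eqref{eqn: reflec opera in sympl coord}, matching Remark \ref{rmk: symplectic billiard flow fried model}.

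\textbf{Step 3: the billiard flows.} Now compare \eqref{def: symplectic billiard flow} with \eqref{def: sympl billiard flow for all positive times}. If $\eta_n\le 0$, or $t<t_0(x)$, no reflection occurs in either model, so $\beta=\alpha$. If $\eta_n>0$ and $t\ge t_0(x)>0$, the model trajectory hits the boundary at model time $t_0$, which corresponds to real time $\alpha(t_0,x)$. This is the first zero of $\pi^*b$ along the orbit, so it equals $\tau(x)$; no earlier zero exists because $\eta_1^2-\eta_n-y_1$ along the model orbit vanishes first at $\eta_1=\sqrt{\eta_n}$. Applying Step 2 at this point and restarting Step 1 from $\varphi^{\tau(x)}(x)$ gives $\beta=\tau(x)+\alpha(t-t_0,\varphi^{\tau(x)}(x))$. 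The case $t_0=0$ is the one where $x$ itself is incoming on the boundary, so $S(x)=\varphi^0(x)$ is reflected first, giving the middle case. Finally, $\delta$ is shrunk so that only one reflection can occur; in the model there is at most one reflection anyway.
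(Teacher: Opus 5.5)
There is a genuine gap, and it is the one you flag yourself: you never prove $a>0$ (where $y_1\circ\kappa_0=af$). This is not merely cosmetic. Besides giving $\partial_t\alpha>0$, the sign of $a$ is used silently in your Step 2. With $q=\eta_1^2-y_1-\eta_n$, the identity $-2\,\eta_1\circ\kappa_0=(H_{y_1}q)\circ\kappa_0=ac\,H_f\pi^*b$ yields $\eta_1=+\sqrt{\eta_n}$ on incoming boundary points only if $ac>0$. Step 3 also needs forward model time to correspond to forward billiard time, so that $\alpha(t_0(x),x)$ can be identified with the first hitting time $\tau(x)$.

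Your suggested repairs do not work. The map $(y,\eta)\mapsto(-y,-\eta)$ is symplectic, but it sends $\{q=0\}$ to $\{\eta_1^2+y_1+\eta_n=0\}$, which destroys the normal form \eqref{eqn: friedlander model}. Time reversal is anti-symplectic, so it cannot be absorbed into $\kappa_0$ either. Similarly, your early remark that one ``may take $c>0$'' by shrinking $U$ is not a free choice: the sign of $c$ is fixed by $\kappa_0$, and only your later second-derivative identity justifies it.

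The missing idea is that no repair is needed, because the sign of $a$ is forced by the other half of the glancing condition, with the roles of $F$ and $G$ exchanged. You obtained $2=(H_{y_1}^2q)(0)=a(x_0)^2c(x_0)\,H_f^2\pi^*b(x_0)$; in the same way, compute
\[
2=(H_q^2y_1)(0)=H_{c\pi^*b}^2(af)(x_0)=a(x_0)\,c(x_0)^2\,H_{\pi^*b}^2f(x_0).
\]
This uses $f(x_0)=\pi^*b(x_0)=H_{\pi^*b}f(x_0)=0$. Since $H_{\pi^*b}^2f=\|\nabla b\|_g^2>0$ by \eqref{eqn: 2nd der of f wrt hamilt flow of p}, you get $a(x_0)>0$, hence $a>0$ on a connected $U$. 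This is exactly what the paper does; its $\lambda,\gamma$ are your $a,c$.

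Once $a,c>0$ are established, the rest of your outline agrees with the paper's argument. That includes defining $\alpha$ through the reparametrization ODE and identifying the incoming set with $\{\eta_1=\sqrt{\eta_n}>0\}$. It also includes the explicit $H_q$-orbit producing the shift $y_n\mapsto y_n+2\sqrt{\eta_n}$, and the case analysis for $\beta$.
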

\begin{proof}
Let $q = \eta_1^2 - y_1 - \eta_n$. Since
$\kappa_0\left(\{f=0\} \cap U\right) = \left\{ y_1 = 0 \right\} \cap V$
and $\kappa_0\left(\{\pi^*b=0\}\cap U\right)=\{ q=0 \}\cap V$,
we have that
$$
y_1 \circ \kappa_0 = \lambda f,\quad
q \circ \kappa_0 = \gamma \pi^*b
$$
for some smooth nonvanishing functions $\lambda,\gamma$ on $U$.
Since $f(x_0)=\pi^*b(x_0)=H_f\pi^*b(x_0)=0$, we have
$$
\begin{aligned}
2
&=(H_{y_1}^2q)(0)
=H_{\lambda f}^2(\gamma\pi^*b)(x_0)=\lambda(x_0)^2\gamma(x_0)H_f^2(\pi^*b)(x_0),
\\
2
&=(H_q^2y_1)(0)
=H_{\gamma\pi^*b}^2(\lambda f)(x_0)=\lambda(x_0)\gamma(x_0)^2H_{\pi^*b}^2f(x_0).
\end{aligned}
$$
Since $H_f^2(\pi^*b)(x_0)$ and $H_{\pi^*b}^2f(x_0)$ are positive
(see Lemma~\ref{lemma: cosphere bundle and boundary can be glanc}),
we see that the functions $\gamma,\lambda$ are positive at $x_0$ and thus everywhere on~$U$.

Since $\kappa_0$ is a symplectomorphism, we have
$$
\begin{aligned}
\kappa^{-1}_0 \circ e^{tH_{y_1}} \circ \kappa_0(x)
&= e^{t H_{y_1 \circ \kappa_0}}(x)
= e^{\alpha(t,x)H_{f}}(x)\quad\text{on }F,
\\
\kappa^{-1}_0 \circ e^{tH_{q}} \circ \kappa_0 (x)
&= e^{tH_{q \circ \kappa_0}}(x)
= e^{\widetilde\alpha(t,x)H_{\pi^*b}}(x)
\quad\text{on }G
\end{aligned}
$$
where $\alpha,\widetilde\alpha$ are smooth and satisfy
$\alpha(0,x)=\widetilde\alpha(0,x)=0$,
 $\partial_t \alpha(t,x)|_{t=0} = \lambda(x)$, and
$\partial_t \widetilde\alpha(t,x)|_{t=0} = \gamma(x)$ (See \cite[Section \S 2.2]{HK12} for more details). 

%\textcolor{red}{(Not sure if we actually have $\lambda>0$ or $\gamma > 0$? Will think about it more.)}. 

    Let $x \in F \cap \{ \pi^*b > 0 \} = S \Sigma$. If we also have that $x \in U$, we see that $q(\kappa_0(x)) > 0$ 
    %\textcolor{red}{(If we assume that $\gamma>0$. Need to prove that this is true)}
    and that $y_1(\kappa_0(x))=0$; so from the definition of $\Phi^t$ provided in Equation \ref{def: symplectic billiard flow}, we have that  $\Phi^t(\kappa_0(x))$ makes sense. By making $U$ small if needed, we see that there exists $\delta > 0$ such that $\Phi^t(\kappa_0(x))$ is in the range of $\kappa_0$ for all $t \in (0,\delta)$ and $x \in F \cap \{ \pi^*b \geq 0 \} \cap U$.  Now pick $(t,x) \in (0, \delta) \times ( F \cap \{ \pi^*b \geq 0 \} \cap U)$, and we proceed with cases.

    \noindent
    \textbf{Case 1:} $\eta_n(k_0(x)) \leq 0$ or $\eta_n( \kappa_0(x)) > 0 \; \text{and} \;  t_0(x) > t \geq 0$.

    From the definition of $\Phi^t$, we have that $\Phi^t(\kappa_0(x)) = e^{tH_{y_1}}(\kappa_0(x)) \in \{ y_1 = 0\} \cap \{ q > 0\}$, which implies that $\kappa_0^{-1} \circ \Phi^t \circ \kappa_0 (x) = e^{\alpha(t,x)H_f}(x) \in F \cap \{ \pi^*b > 0 \} = S \Sigma^{\circ}$.
    %\textcolor{red}{(Here we use that $\gamma>0$, which we need to prove)}.
    Since from the definition of $\varphi^t$, we know that $\varphi^t$ is given by $e^{tH_f}$ on $F \cap \{ \pi^*b > 0 \}$, we get that $\kappa_0^{-1} \circ \Phi^t \circ \kappa_0 (x) = \varphi^{\alpha(t,x)}(x)$.

    \noindent
    \textbf{Case 2:} $\eta_n( \kappa_0(x)) > 0$ and $t \geq t_0(x) \geq 0$

    See that $\Phi^{t_0(x)}(\kappa_0(x)) \in \left\{ y_1 = 0 \right\} \cap \{ q = 0\}$, and furthermore Remark \ref{rmk: symplectic billiard flow fried model} says that the right limit $\Phi^{t_0(x)}(\kappa_0(x)) $ is obtained from the left limit $\lim_{\epsilon \rightarrow 0^+} \Phi^{t_0(x)-\epsilon}(\kappa_0(x)) \in \left\{ y_1 = 0 \right\} \cap \{ q = 0\}$ by following the Hamiltonian flow $e^{tH_q}$ until we reach again the hypersurface $\{ y_1 = 0\}$. Remark \ref{rmk: sympl billiard flow}  then implies that $\varphi^{\alpha(t_0(x),x)}(x) = \kappa_0^{-1} \circ \Phi^{t_0(x)} \circ \kappa_0 (x)$. We get Equation \eqref{eqn: relating billiard flow with flow from fried model} by noticing that $\Phi^{t-t_0(x)}(\kappa_0(x)) \in \left\{ y_1 = 0 \right\} \cap \{ q > 0 \}$ when $t > t_0(x)$ and that the first hitting time $\tau(x) = \alpha(t_0(x),x)$ when $t_0(x) > 0$.

    Finally, we prove Equation \eqref{eqn: reflec opera in sympl coord}. First notice that $\kappa_0(F \cap \{ \pi^*b = 0 \} \cap \{ H_f \pi^*b < 0 \} \cap U) = \left\{ y_1 =0 , \eta_1^2 = \eta_n, \eta_1 > 0 \right\} \cap V$ because for $x \in F \cap \{ \pi^*b =0 \} \cap \{ H_f \pi^*b < 0 \} \cap U$ we have that $- (2\eta_1 \circ \kappa_0)(x) = H_{y_1}q (\kappa_0(x)) = \gamma(x) \lambda(x) H_f \pi^*b (x) < 0$ since $\gamma , \lambda > 0$. As a result, if $(y, \eta) \in \kappa_0(F \cap \{ \pi^*b = 0 \} \cap \{ H_f \pi^*b < 0 \} \cap U)$, then 
    $$
     (y,\eta) = (0,y',y_n,\sqrt{\eta_n},\eta',\eta_n), \quad \text{with} \quad \eta_n > 0.
    $$
    
    Next, let $x \in F \cap \{ \pi^*b = 0\} \cap \{ H_f \pi^*b < 0 \} \cap U$. Recall that Remark \ref{rmk: refl oper same as moving along Ham flow of pi b} says that the point $R(x)$ can be obtained by starting at $x$ and then moving along the Hamiltonian flow of $\pi^*b$ until we reach again the hypersurface $F$. Therefore, one gets that the point $\kappa \circ R \circ \kappa_0^{-1}(\kappa_0(x))$ is obtained by starting at $\kappa_0(x)$ and following the Hamiltonian flow of $q$ until we reach again the hypersurface $\{ y_1 = 0 \}$. Concretely, if we set $\kappa_0(x) = (0,y',y_n,\sqrt{\eta_n}, \eta', \eta_n)$, where $\eta_n > 0$, one can check that 
    $$
    \kappa_0 \circ R \circ \kappa_0^{-1}(0,y',y_n,\sqrt{\eta_n}, \eta', \eta_n) = (0,y',y_n+2\sqrt{\eta_n},-\sqrt{\eta_n}, \eta', \eta_n).
    $$
\end{proof}

%%%%%%%%%%%%%%%%%%%%%%%%%%%%%%%%%%%%%%%%%%%%%%%%%%%%%%%%%%%%%%%%%%%%%%%%%%%%%%%%
\subsection{Closed billiard trajectory with multiple glancing points}

Here we describe the billiard flow near a trajectory with multiple glancing points,
using the concept of Poincar\'e maps.

%%%%%%%%%%%%%%%%%%%%%%%%%%%%%%%%%%%%%%%%%%%%%%%%%%%%%%%%%%%%%%%%%%%%%%%%%%%%%%%%
\subsubsection{Away from the boundary}

We construct first a Poincar\'e map between two Poincar\'e sections which have the property that the billiard flow $\varphi^t$ in between these two sections lies in the interior $S \Sigma^{\circ}$. One or both of the Poincar\'e sections could be a subset of the boundary $S_{\partial\Sigma}\Sigma$ not intersecting the glancing set $S\partial \Sigma$.
\begin{lemm}
    \label{lemma: billiard flow between two transv hypersurf}
    Let $F$ be the hypersurface defined by Equation \eqref{eqn: hypersurfaces F and G}. Suppose that $t_0 > 0$ and 
$$
\begin{aligned}
x_0 & \in \left(F \cap \{ \pi^*b \geq 0 \} \right)\setminus \{\pi^*b=0,\ H_f\pi^*b\leq 0\},
\\
x_1 & \in \left(F \cap \{ \pi^*b \geq 0 \} \right)\setminus \{\pi^*b=0,\ H_f\pi^*b\geq 0\}
\end{aligned}
%\setminus \{H_f \pi^*b = 0 \} = S \Sigma \setminus S\partial \Sigma
$$ 
    are such that $\varphi^{t_0 -0}(x_0) = x_1$
     and $\varphi^t(x_0) \in F \cap \{ \pi^*b > 0 \} = S \Sigma^{\circ}$ for all $t \in (0,t_0)$. Let $S_j \subset F$ ($j = 0, 1$) be hypersurfaces (without boundary) transversal to $H_f$ at $x_j$
     and contained in $\{\pi^*b\geq 0\}$. Then there exist open neighborhoods $U_j$ of $x_j$ in $S_j$ and a smooth function $T: U_0 \rightarrow \mathbb R_+$ such that 
    \begin{enumerate}
        \item $T(x_0) = t_0$,
        \item the Poincar\'e map $\varphi^{T(\bullet)-0}(\bullet): U_0 \rightarrow U_1$ is a diffeomorphism, and 
        \item $\varphi^t(x) \in F \cap \{ \pi^*b > 0\}$ for all $x \in U_0$ and $t \in (0,T(x))$.
    \end{enumerate}
\end{lemm}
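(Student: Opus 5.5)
The plan is to reduce everything to the smooth geodesic flow $e^{tH_f}$ on $F$ and apply the implicit function theorem. Since $\varphi^t(x_0)\in S\Sigma^\circ$ for $t\in(0,t_0)$ and $\varphi^{t_0-0}(x_0)=x_1$, the billiard trajectory on $[0,t_0)$ coincides with the geodesic trajectory. The one exception is that when $x_0\in G$ with $H_f\pi^*b(x_0)>0$, we have $S(x_0)=x_0$, so no reflection happens at time $0$ either. Hence $e^{tH_f}(x_0)=\varphi^t(x_0)$ for $t\in[0,t_0)$ and, by continuity of the smooth flow, $e^{t_0H_f}(x_0)=x_1$.

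Next I define $\Psi(t,x)=e^{tH_f}(x)$ for $x\in S_0$ near $x_0$ and $t$ near $t_0$. To find $T$, I pick a local defining function $\rho$ for $S_1$ inside $F$ near $x_1$, with $\rho=0$ on $S_1$ and $d\rho\ne0$. I then solve $\rho(e^{tH_f}(x))=0$. Transversality of $S_1$ to $H_f$ at $x_1$ gives $\partial_t\rho(e^{tH_f}(x))|_{(t_0,x_0)}=d\rho(H_f)(x_1)\neq0$. The implicit function theorem therefore yields a smooth $T$ on a neighborhood $U_0$ with $T(x_0)=t_0$ and $e^{T(x)H_f}(x)\in S_1$. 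The map $x\mapsto e^{T(x)H_f}(x)$ is a local diffeomorphism $S_0\to S_1$ near $x_0$. Indeed, its inverse is constructed the same way from $S_1$ back to $S_0$ using $e^{-tH_f}$ and transversality at $x_0$. Equivalently, $d\Psi$ at $(t_0,x_0)$ maps $\mathbb R\partial_t\oplus T_{x_0}S_0$ isomorphically onto $T_{x_1}F=\mathbb R H_f\oplus T_{x_1}S_1$, and one projects along $H_f$. After shrinking, I set $U_1$ to be the image of $U_0$.

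The remaining and main step is item (3): for $x\in U_0$, $e^{tH_f}(x)$ must stay in $\{\pi^*b>0\}$ for $t\in(0,T(x))$. Only then does the billiard flow equal the geodesic flow, giving $\varphi^{T(x)-0}(x)=e^{T(x)H_f}(x)$ and item (2). On a compact middle interval $[\delta,t_0-\delta]$ this follows from continuity and compactness, since $\pi^*b\circ e^{tH_f}(x_0)\ge c>0$ there. Near the endpoints more care is needed.

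At $t$ near $0$, if $\pi^*b(x_0)>0$, continuity suffices. If $x_0\in G$, then $H_f\pi^*b(x_0)>0$. I use the Taylor expansion \eqref{eqn: second order approx of function pi b}, as in Case 1 of Lemma \ref{lemma: uniform eps where hamil of f inside F and has not hit G}, but now for points $x\in S_0\subset\{\pi^*b\ge0\}$ near $x_0$ rather than only on $G$. Write $\pi^*b(e^{tH_f}x)=\pi^*b(x)+tH_f\pi^*b(x)+O(t^2)$. Here $\pi^*b(x)\ge0$ and $H_f\pi^*b(x)>c$, so this quantity is positive for $0<t<\delta$, uniformly in $x$.

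At $t$ near $T(x)$ the argument is symmetric, run backward from $x_1$. If $x_1\in G$, then $H_f\pi^*b(x_1)<0$, so $\frac{d}{ds}\pi^*b(e^{sH_f}x)<0$ near $s=T(x)$. Moreover $\pi^*b(e^{T(x)H_f}x)\ge0$ because $S_1\subset\{\pi^*b\ge0\}$. Hence $\pi^*b>0$ on $(T(x)-\delta,T(x))$. Choosing $\delta$ uniform and shrinking $U_0$ so that $|T(x)-t_0|<\delta/2$ completes item (3).
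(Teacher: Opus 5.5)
Your proposal is correct and follows essentially the same route as the paper: you identify the billiard trajectory with $e^{tH_f}$, get $T$ from the implicit function theorem applied to a defining function of $S_1$, and obtain the local diffeomorphism from transversality (your ``project along $H_f$'' version is exactly the paper's argument that $w+dT(w)H_f(x_0)=0$ forces $w=0$). Item (3) is handled as in the paper, via the sign of $H_f\pi^*b$ at $x_j\in G$ together with $S_j\subset\{\pi^*b\ge 0\}$ near the endpoints and compactness on a middle interval. One bookkeeping fix: when $T(x)>t_0$, the intervals $[\delta,t_0-\delta]$ and $(T(x)-\delta,T(x))$ leave a gap, so take the middle interval to be $[\delta,t_0-\delta/2]$ (still compactly inside $(0,t_0)$), which closes the gap once $|T(x)-t_0|<\delta/2$.
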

\begin{proof}
    The assumption that $\varphi^t(x_0) \in F \cap \{ \pi^*b >0 \}$ for all $t \in (0,t_0)$ implies that $\varphi^t(x_0) = e^{tH_f}(x_0)$ for all $t \in (0,t_0)$ because $\varphi^t = e^{tH_f}$ on $F \cap \{ \pi^*b > 0 \}$, and as a result we have that $x_1 = \varphi^{t_0-0}(x_0) = e^{t_0H_f}(x_0)$.  Let $h$ be a defining function for the hypersurface $S_1$ near the point $x_1$. Since $S_1$ is transversal to the Hamiltonian flow $e^{tH_f}$ at $x_1 = e^{t_0H_f}(x_0)$, we get
    $$
    \partial_t|_{t=t_0}\left(h\left(e^{t H_f}(x_0\right)\right) \neq 0.
    $$
    Therefore, the implicit function theorem says that there exists an open neighborhood $U_0 \subset S_0$ of $x_0 \in S_0$ and a smooth function $T: U_0 \rightarrow \mathbb R$ such that $T(x_0) = t_0$ and $h\left( e^{T(x) H_f}(x)\right)=0$ for all $x \in U_0$, which implies that $e^{T(x) H_f}(x) \in S_1$. Since $T(x_0) = t_0 > 0$, we can take $U_0$ small enough so that $T(x) > 0$ for all $x \in U_0$. 

    Consider the function $e^{T(\bullet)H_f}(\bullet): U_0 \rightarrow S_1$. Notice that if $w \in T_{x_0} U_0$, then 
    $$
    d\left( e^{T(\bullet)H_f}(\bullet) \right)(w) = d\left(e^{tH_f}\right)|_{t=T(x_0)}(w) + dT(w) H_f(x_1), 
    $$
    which implies that $w+dT(w)H_f(x_0) = 0$ because $e^{tH_f}$ is a local diffeomorphism. But since $U_0$ is transversal to $H_f$ at $x_0$ and $w \in T_{x_0}U_0$, we must have that $w=0$, which gives that the differential $d\left( e^{T(\bullet)H_f}(\bullet) \right)$ is invertible at $x_0$. By making $U_0$ small if needed, the inverse function theorem says that there exists an open neighborhood $U_1 \subset S_1$ of $x_1$ such that 
    $$
    e^{T(\bullet)H_f}(\bullet): U_0 \rightarrow U_1 \quad \text{is a diffeomorphism.}
    $$

    Next, we show that $e^{tH_f}(x) \in F \cap \left\{ \pi^*b > 0 \right\}$ for all $x \in U_0$ and $t \in (0,T(x))$. Let $j=0 \; \text{or} \; 1$. If $x_j \in F \cap \{ \pi^*b > 0 \}$, we set $y_j = x_j$. 
    
    If $x_j \in \left(F \cap \{ \pi^*b = 0 \} \right)$, then we have that $(-1)^jH_f \pi^*b (x_j) > 0$. Thus, by making $U_j$ small enough, we can ensure that for some constant $c>0$, we have $(-1)^jH_f \pi^*b (x) > c$ for all $x \in U_j$, so there exists $\delta > 0$ such that $e^{ tH_f}(x) \in F \cap \{ \pi^*b >0 \}$ for all $\left((-1)^jt,x\right) \in (0,\delta] \times U_j$. Set $y_j = e^{\left( j t_0 + (-1)^j \frac{\delta}{2} \right) H_f}(x_j) \in F \cap \{\pi^*b > 0 \}$.

    By construction, we have that $y_0 , y_1 \in F \cap \{ \pi^*b > 0 \}$ with $y_j = e^{s_jH_f}(x_0)$, where $0 \leq s_0 \leq \frac{\delta}{2}$ and $t_0 - \frac{\delta}{2} \leq s_1 \leq t_0$. Additionally, we have that $e^{tH_f}(x_0) \in F \cap \{ \pi^*b > 0 \}$ for all $t \in [s_0, s_1]$. Thus, for each $s \in [s_0,s_1]$, there exist an open neighborhood $V_s \subset U_0$ of $x_0$ and $\epsilon_s > 0$ such that $e^{tH_f}(x) \in F \cap \{ \pi^*b > 0 \}$ for all  $(t,x) \in (s-\epsilon_s, s+ \epsilon_s) \times V_s$. Using the compactness of $[s_0,s_1]$, we get an open neighborhood $U_0 \subset S_0$ such that $e^{tH_f}(x) \in F \cap \{ \pi^*b > 0 \}$ for all $(t,x) \in [s_0,s_1] \times U_0$.  
     
    By making $U_0$ small enough if needed, we get that $e^{tH_f}(x) \in F \cap \{ \pi^*b > 0 \}$ for all $x \in U_0$ and $t \in (0,T(x))$. We finish the proof by noticing that the flow $\varphi^t$ on $F \cap \{ \pi^*b > 0 \}$ is given by $e^{tH_f}$.
    
\end{proof}

%%%%%%%%%%%%%%%%%%%%%%%%%%%%%%%%%%%%%%%%%%%%%%%%%%%%%%%%%%%%%%%%%%%%%%%%%%%%%%%%
\subsubsection{Transversal intersections}

Next, we construct a Poincar\'e map between two Poincar\'e sections where the billiard flow $\varphi^t$ in between these sections does not have any glancing points. See Figure \ref{fig:poinc map transversal reflections}
%\textcolor{red}{!! Add a figure to explain Lemma \ref{lemma: Poinc map with strict reflections between poinc secs} visually.}

\begin{figure}[ht]
    \centering
    \includegraphics[width=0.95\linewidth]{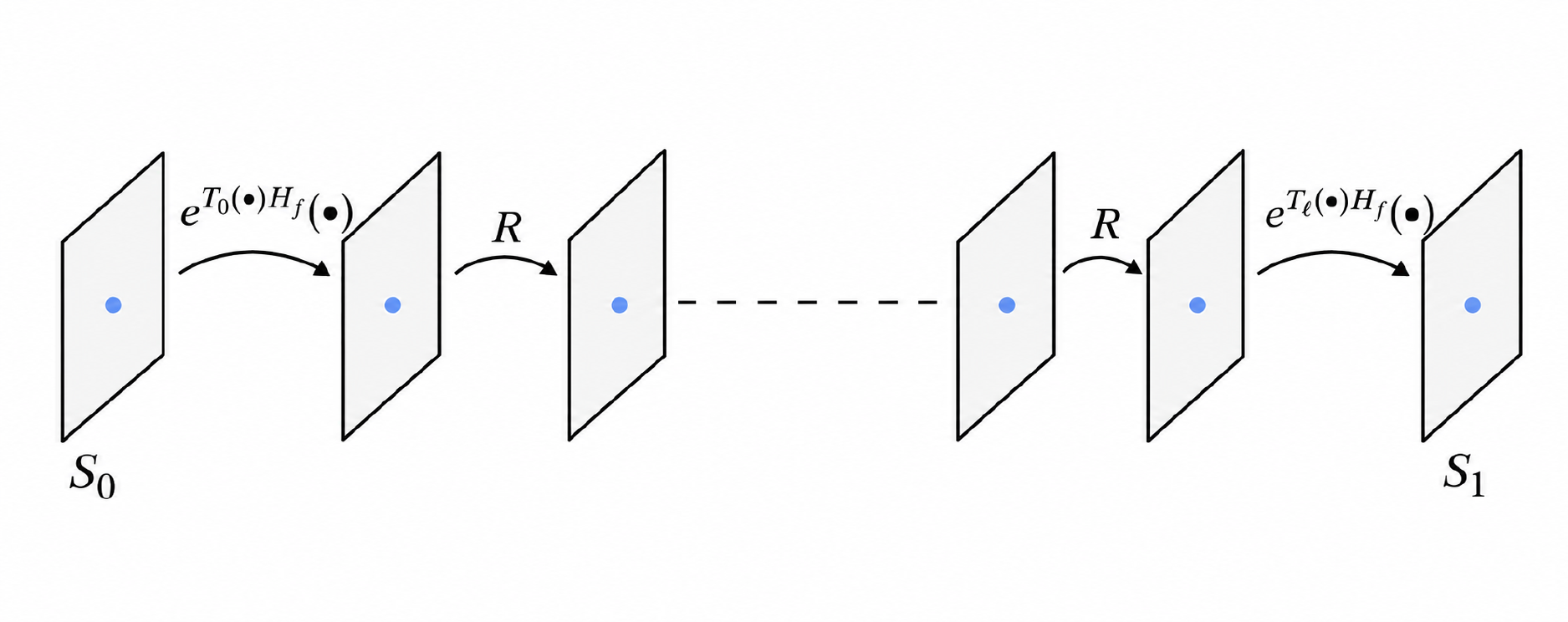}
    \caption{Structure of a Poincar\'e map $\varphi^{T(\bullet)}(\bullet): S_0 \rightarrow S_1$ with only transversal reflections between its Poincar\'e sections $S_0$ and $S_1$.}
    \label{fig:poinc map transversal reflections}
\end{figure}
\begin{lemm}
    \label{lemma: Poinc map with strict reflections between poinc secs}
       Let $F$ be the hypersurface defined by Equation \eqref{eqn: hypersurfaces F and G}. Suppose that $\tilde t > 0$ and $x_0 , x_1 \in F \cap \{ \pi^*b > 0 \} = S \Sigma^{\circ}$ are such that $\varphi^{\tilde t}(x_0) = x_1$. Assume that there exist $t_0 = 0 < t_1 < \cdots < t_l < t_{l+1} = \tilde t$, for some $l \geq 0$, such that $\varphi^{t_j}(x_0) \in F \cap \{ \pi^*b = 0 \} \setminus \{ H_f \pi^*b = 0 \} = S_{\partial \Sigma}\Sigma \setminus S \partial \Sigma$ for all $ 1 \leq j \leq l$ and $\varphi^t(x_0) \in F \cap \{ \pi^*b > 0 \}$ for all $t \in \left[0,\tilde t\right] \setminus \{ t_1, \dots, t_l \}$. If $S_j \subset F \cap \{ \pi^*b > 0 \}$, $j=0,1$, are hypersurfaces transversal to $H_f$ at $x_j$, then there exist open neighborhoods $U_j$ of $x_j$ in $S_j$ and a smooth function $T: U_0 \rightarrow \mathbb R_+$ such that 
    \begin{enumerate}
        \item $T(x_0) = \tilde t$, and 
        \item the Poincar\'e map $\varphi^{T(\bullet)}(\bullet): U_0 \rightarrow U_{1}$ is a diffeomorphism.
    \end{enumerate}
    Furthermore, the Poincar\'e map has the form
    $$
    \varphi^{T(\bullet)}(\bullet) = e^{T_l(\bullet)H_f}(\bullet) \circ R \circ \cdots  \circ e^{T_1(\bullet)H_f}(\bullet) \circ R \circ e^{T_0(\bullet)H_f}(\bullet), 
    $$
    where $R$ is the reflection operator defined by Equation \eqref{eqn: reflection operator}, $T_0: U_0 \rightarrow \mathbb R_+$ is smooth, $T_j: V_j \rightarrow \mathbb R_+$, $1 \leq j \leq l$, are smooth functions defined on open neighborhoods $V_j \subset F \cap \{\pi^*b = 0\} \cap \{ H_f \pi^*b > 0\}$ of $\varphi^{t_j}(x_0)$, and $T_j\left(\varphi^{t_j}(x_0)\right) = t_{j+1} - t_j$ for all $0 \leq j \leq l$.
\end{lemm}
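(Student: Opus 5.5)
The plan is to compose Poincaré maps segment by segment, using Lemma~\ref{lemma: billiard flow between two transv hypersurf} on each free-flight leg and showing that the reflection $R$ is a diffeomorphism near each strict reflection point. Write $z_j=\varphi^{t_j-0}(x_0)$ for the incoming boundary points, $1\le j\le l$. By hypothesis $z_j\in F\cap\{\pi^*b=0\}\cap\{H_f\pi^*b<0\}$. The outgoing points $R(z_j)=\varphi^{t_j}(x_0)$ lie in $\{H_f\pi^*b>0\}$, since $R$ flips the sign of $\langle v,\nabla b\rangle$.

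As intermediate Poincaré sections, take $\Sigma^-_j$, an open neighborhood of $z_j$ in $F\cap G\cap\{H_f\pi^*b<0\}$, and $\Sigma^+_j$, an open neighborhood of $R(z_j)$ in $F\cap G\cap\{H_f\pi^*b>0\}$. Both are smooth hypersurfaces of $F$, by Lemma~\ref{lemma: diff of def fn for F and G are lin. indep}. They are transversal to $H_f$ precisely because $H_f\pi^*b\neq 0$ there. Both lie in $\{\pi^*b\ge 0\}$.

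\textbf{Step 1: reflection.} The map $R$ of \eqref{eqn: reflection operator} is smooth on $F\cap G\cap\{H_f\pi^*b\neq0\}$, and it is an involution there (reflection in $\nabla b$). So it restricts to a diffeomorphism $\Sigma^-_j\to\Sigma^+_j$ after shrinking.

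\textbf{Step 2: free legs.} For each $0\le j\le l$, apply Lemma~\ref{lemma: billiard flow between two transv hypersurf} to the leg from the starting point ($x_0$ when $j=0$, $R(z_j)$ otherwise) to the endpoint ($z_{j+1}$, or $x_1$ when $j=l$). The starting section is $S_0$ or $\Sigma_j^+$; the ending section is $\Sigma^-_{j+1}$ or $S_1$. The hypotheses hold: the starting point is interior or in $\{H_f\pi^*b>0\}$, the endpoint is interior or in $\{H_f\pi^*b<0\}$, and the leg is interior for $t\in(t_j,t_{j+1})$. This produces smooth times $T_j$ with $T_j(\text{start})=t_{j+1}-t_j$, together with diffeomorphisms $e^{T_j(\bullet)H_f}(\bullet)$ between neighborhoods whose interior trajectories stay in $S\Sigma^\circ$. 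For the last leg, $x_1$ is interior, so the left limit agrees with the value.

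\textbf{Step 3: composition.} Shrink the neighborhoods backward from $j=l$ to $j=0$ so that each map's image lies in the next map's domain; this is possible by continuity. Then, by Definition~\eqref{def: sympl billiard flow for all positive times}, the billiard flow from $x\in U_0$ follows $e^{tH_f}$ until it first hits $G$ at time $T_0(x)$, where part (3) of Lemma~\ref{lemma: billiard flow between two transv hypersurf} guarantees that this is the first hitting time $\tau$. The point it hits is in $\{H_f\pi^*b<0\}$, so $S=R$ applies there, and the flow then continues. This identifies $\varphi^{T(x)}(x)$ with the stated composition, where $T(x)=T_0(x)+T_1(\cdot)+\dots+T_l(\cdot)$ is evaluated along the chain. $T$ is smooth as a sum of compositions of smooth maps, satisfies $T(x_0)=\tilde t$, and the Poincaré map is a composition of diffeomorphisms, hence a diffeomorphism onto its image $U_1$.

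The main obstacle is Step 3. One has to check carefully that the first hitting time $\tau$ along the billiard flow coincides with the implicit-function time $T_j$, so that no earlier grazing or boundary contact occurs for nearby $x$. Part (3) of Lemma~\ref{lemma: billiard flow between two transv hypersurf} gives this once the neighborhoods are small enough.
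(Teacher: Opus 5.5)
Your proposal is correct and follows essentially the same route as the paper. The paper also takes sections in $F\cap\{\pi^*b=0\}\cap\{\pm H_f\pi^*b>0\}$ at the reflection points and applies Lemma~\ref{lemma: billiard flow between two transv hypersurf} on each free leg. It then uses $R$ as a diffeomorphism between the incoming and outgoing sections and composes after shrinking neighborhoods, with $T$ defined as the sum of the $T_j$ along the chain. Your explicit check, via part (3) of that lemma, that each $T_j$ is the first boundary-hitting time $\tau$ is a detail the paper leaves implicit.
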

\begin{proof}
    For $1 \leq j \leq l$, we define the hypersurfaces 
    $$
    G_j^{\pm} = F \cap \{ \pi^*b = 0\} \cap \{ \pm H_f \pi^*b > 0 \},
    $$
    which are transversal to $H_f$ at $\varphi^{t_j+0^{\pm}}(x_0)$. Set $G^+_0 = S_0$ and $G^-_{l+1} = S_{1}$.

    Let $0 \leq j \leq l$. The points $\varphi^{t_j+0}(x_0), \varphi^{t_{j+1}-0}(x_0)$ and hypersurfaces $G^+_j,G^-_{j+1}$ satisfy the assumptions in Lemma \ref{lemma: billiard flow between two transv hypersurf}, so there exist open neighborhoods $V_j^+ \subset G^+_j$ and $V^-_{j+1} \subset G^-_{j+1}$ of $\varphi^{t_j+0}(x_0)$ and $\varphi^{t_{j+1}-0}(x_0)$, respectively, and a smooth function $T_j: V^+_j \rightarrow \mathbb R_+$ such that 
    (1), (2), and (3) in Lemma \ref{lemma: billiard flow between two transv hypersurf} hold. Properties (2) and (3) imply that the Poincar\'e map $\varphi^{T_j(\bullet)-0}(\bullet): V_j^+ \rightarrow V_{j+1}^-$ is given by $\varphi^{T_j(\bullet)-0}(\bullet) = e^{T_j(\bullet)H_f}(\bullet)$.

    For each $1 \leq j \leq l$, notice that the reflection operator $R: G_j^- \rightarrow G_j^+$ is a diffeomorphism. As a result for any $0 \leq j < l$, we have that the Poincar\'e map 
    \begin{equation*}
        \begin{split}
            &\varphi^{T_j(\bullet)}(\bullet): V_j^+ \rightarrow V_{j+1}^+ \quad \text{given by} \\
            & \varphi^{T_j(\bullet)}(\bullet) = R \circ \varphi^{T_j(\bullet)-0}(\bullet) = R \circ e^{T_j(\bullet)H_f}(\bullet) \quad \text{is a diffeomorphism,}
        \end{split}
    \end{equation*}
    by modifying $V_j^+$ and $V_{j+1}^+$ if needed.
    
    Define 
    \begin{equation*}
        \begin{split}
            &T: V_0^+ \rightarrow \mathbb R_+ \quad \text{by}\\
            &T(x) = \sum_{j=0}^l \widehat T_j(x), \quad \text{where} \quad \widehat T_0(x) = T_0(x) \quad \text{and}\\
            &\widehat T_j(x) = \left(T_j \circ \varphi^{T_{j-1}(\bullet)}(\bullet)  \circ \cdots \circ \varphi^{T_{0}(\bullet)}(\bullet)\right)(x) \quad \text{if} \quad 1 \leq j \leq l.
        \end{split}
    \end{equation*}
    We see that the Poincar\'e map
    \begin{equation*}
        \begin{split}
            \varphi^{T(\bullet)}(\bullet) &: V_0^+ \rightarrow V^-_{l+1} \quad \text{given by} \\ 
            \varphi^{T(\bullet)}(\bullet) &= \varphi^{T_l(\bullet)}(\bullet) \circ \cdots \circ \varphi^{T_0(\bullet)}(\bullet) \\
            &= e^{T_l(\bullet)H_f}(\bullet) \circ R \circ e^{T_{l-1}(\bullet)H_f}(\bullet) \circ \cdots  \circ R \circ e^{T_0(\bullet)H_f}(\bullet) \quad \text{is a diffeomorphism,}
        \end{split}
    \end{equation*}
    by modifying the open sets $V_0^+$ and $V^-_{l+1}$ if needed. To see why the map $\varphi^{T(\bullet)}(\bullet): V_0^+ \rightarrow V^-_{l+1}$ is a diffeomorphism, notice that it is a composition of local diffeomorphisms. Setting $U_0$ and $U_1$ to be $V_0^+$ and $V^-_{l+1}$, respectively, and $V_j$ to be $V_j^+$ finishes the proof.
\end{proof}

%%%%%%%%%%%%%%%%%%%%%%%%%%%%%%%%%%%%%%%%%%%%%%%%%%%%%%%%%%%%%%%%%%%%%%%%%%%%%%%%
\subsubsection{Glancing intersections}

Finally, we construct a Poincar\'e map between two Poincar\'e sections where the billiard flow $\varphi^t$ between these sections has glancing points. Precisely, suppose that $x_0 \in F \cap \{ \pi^*b = 0 \} \cap \{ H_f \pi^*b = 0\} = S \partial \Sigma$. Lemma \ref{lemma: cosphere bundle and boundary can be glanc} says that the hypersurfaces $F$ and $\{\pi^*b = 0 \}$ are glancing at $x_0$, so we define the Poincar\'e sections 
\begin{equation}
    \label{eqn: Poinc secs near glanc point}
    \begin{split}
        &S^{\pm} = \kappa_0^{-1}\left(\left\{ (y,\eta): y_1 =0, \; \eta_1^2 - \eta_n > 0, \;  \eta_1 = \mp \delta \right\} \cap V \right) \subset F \cap \{ \pi^*b > 0 \},\\
        &\text{where } \delta > 0 \quad \text{is small}, \quad y_0^{\pm} = \kappa_0^{-1} \left( e^{\pm \delta H_{y_1}}(0) \right) \in S^{\pm},
    \end{split}
     \quad 
\end{equation}
and the symplectomorphism $\kappa_0$, the open set $V$, and $(y,\eta)$ are defined by Theorem \ref{thm: equiv of glancing hypersurfaces} and Equation \eqref{eqn: friedlander model}. Notice that the hypersurfaces 
$$
\left\{ (y,\eta): y_1 =0, \; \eta_1^2 - \eta_n > 0, \;  \eta_1 = \mp \delta \right\} \cap V 
$$ 
are transversal to the billiard flow $\Phi^t$ defined by Equation \eqref{def: symplectic billiard flow}, so Lemma \ref{lemma: relating billiard flow with flow from fried model} gives that the hypersurfaces $S^{\pm}$ are transversal to the billiard flow $\varphi^t$. We construct a Poincar\'e map from the Poincar\'e section $S^-$ to the Poincar\'e section $S^+$.

\begin{lemm}
    \label{lemma: Poincare map between Poinc secs near glanc point}
    Let $x_0 \in F \cap \{ \pi^*b = 0 \} \cap \{ H_f \pi^*b = 0\} = S \partial \Sigma$. The Poincar\'e map from $S^{-}$ to $S^+$ is given by
    $$
    R_0:=\varphi^{\beta\left(T(\bullet),\bullet\right)}(\bullet): S^{-} \rightarrow S^+,\quad
    \left(T\circ \kappa_0^{-1} \right) (y,\eta) = \begin{cases}
        2 \delta - 2 \sqrt{\eta_n} &\text{if} \quad \eta_n > 0\\
        2 \delta &\text{else} 
    \end{cases}
    $$
    where $\beta$ is  given by Lemma \ref{lemma: relating billiard flow with flow from fried model}. Furthermore, the Poincar\'e map has the form 
    $$ 
    R_0 = \kappa_0^{-1} \circ \widetilde R \circ \kappa_0,
    $$
    where
     $$
     \widetilde R(0,y',y_n,\delta,\eta',\eta_n) = \begin{cases}
        (0, y',y_n + 2 \sqrt{\eta_n},-\delta, \eta', \eta_n) &\text{if} \quad \eta_n > 0 \\
        (0,y',y_n, -\delta, \eta', \eta_n) &\text{else} 
    \end{cases},
    $$
    $y = (y_1, y' , y_n)$, and $\eta = (\eta_1, \eta',\eta_n)$.
\end{lemm}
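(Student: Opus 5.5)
The plan is to work entirely in the Friedlander model coordinates supplied by $\kappa_0$ and to transfer the result back using Lemma~\ref{lemma: relating billiard flow with flow from fried model}. Take a point $x\in S^-$ and write $\kappa_0(x)=(0,y',y_n,\delta,\eta',\eta_n)$. By the definition of $S^-$ we have $\eta_1=\delta$ and $\delta^2-\eta_n>0$, hence $\sqrt{\eta_n}<\delta$ whenever $\eta_n>0$. The first step is to follow the model billiard flow $\Phi^t$ from \eqref{def: symplectic billiard flow} and locate the first time at which the $\eta_1$ coordinate equals $-\delta$.

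The computation splits into two cases.

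\textbf{Case $\eta_n\le 0$.} Here $\Phi^t(y,\eta)=(0,y',y_n,\delta-t,\eta',\eta_n)$, which reaches $\{\eta_1=-\delta\}$ exactly at $t=2\delta$. At that time $\eta_1^2-\eta_n=\delta^2-\eta_n>0$, so the image lies in the model section $\kappa_0(S^+)$.

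\textbf{Case $\eta_n>0$.} We have $t_0=\delta-\sqrt{\eta_n}>0$. For $t\ge t_0$ the flow gives $\eta_1=\delta-t-2\sqrt{\eta_n}$, which equals $-\delta$ when $t=2\delta-2\sqrt{\eta_n}$. This value satisfies $t\ge t_0$, since $2\delta-2\sqrt{\eta_n}-(\delta-\sqrt{\eta_n})=\delta-\sqrt{\eta_n}>0$. For $t<t_0$ we have $\eta_1>\sqrt{\eta_n}>-\delta$, so there is no earlier hit. The image point is $(0,y',y_n+2\sqrt{\eta_n},-\delta,\eta',\eta_n)$.

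Both cases together give exactly the stated formulas for $T\circ\kappa_0^{-1}$ and $\widetilde R$. We also need that no intermediate time lands on $\{\eta_1=-\delta\}$. This follows because $\eta_1$ is strictly decreasing in $t$ on each piece, and the jump at $t_0$, from $\sqrt{\eta_n}$ to $-\sqrt{\eta_n}$, stays strictly above $-\delta$.

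The second step is to pull this back to $\varphi^t$. Lemma~\ref{lemma: relating billiard flow with flow from fried model} gives $\kappa_0^{-1}\circ\Phi^t\circ\kappa_0(x)=\varphi^{\beta(t,x)}(x)$ for $t\in(0,\delta_0)$, where $\delta_0$ is the $\delta$ of that lemma. Applying it at $t=T(x)$ yields $\varphi^{\beta(T(x),x)}(x)=\kappa_0^{-1}\widetilde R\kappa_0(x)\in S^+$, which is the claimed form $R_0=\kappa_0^{-1}\circ\widetilde R\circ\kappa_0$.

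To see that this really is the Poincar\'e map, i.e. the first return to $S^+$, two facts are needed:
\begin{itemize}
\item $\partial_t\alpha>0$, so $\beta(\cdot,x)$ is increasing in $t$ and is a bijection onto the relevant time interval;
\item the portion of the $\varphi$-trajectory before time $\beta(T(x),x)$ corresponds to the $\Phi$-trajectory before time $T(x)$, which the first step showed never meets $\{\eta_1=-\delta\}$.
\end{itemize}
Monotonicity of $\beta$ must be checked across the branches of \eqref{eqn: time shift when relating billiard flow and flow from fried model}. Continuity of the time at $t=t_0$ comes from $\tau(x)=\alpha(t_0(x),x)$ and $\alpha(0,\cdot)=0$, and on each branch the increase comes from $\partial_t\alpha>0$.

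The main obstacle is a technical one. Lemma~\ref{lemma: relating billiard flow with flow from fried model} is stated only for $t\in(0,\delta_0)$ and for trajectories staying in $U$, whereas we need $T(x)$ up to $2\delta$. I would handle this by choosing the $\delta$ in \eqref{eqn: Poinc secs near glanc point} with $2\delta<\delta_0$, and by shrinking $V$ so that the model trajectories for $t\in[0,2\delta]$ remain inside $V$. This is possible because $|\eta_1|\le\delta$ throughout and only $y_n$ shifts, by at most $2\delta$. A second, minor point is the degenerate situation $t_0(x)=0$ with $\eta_n>0$; it cannot occur here, since $\eta_1=\delta>\sqrt{\eta_n}$ on $S^-$.
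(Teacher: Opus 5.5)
Your proposal is correct and follows the same route as the paper. Like the paper, you compute the first time the model billiard flow $\Phi^t$ started on $\kappa_0(S^-)$ reaches $\{\eta_1=-\delta\}$, identify the endpoint with $\widetilde R$, and transfer back to $\varphi^t$ via Lemma~\ref{lemma: relating billiard flow with flow from fried model}; your extra checks (no earlier hit, monotonicity and continuity of $\beta$ across the reflection time, and taking $2\delta$ inside that lemma's time range) make explicit what the paper leaves as ``one can check.''
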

\begin{proof}
     Consider the flow $\Phi^t$ defined by Equation \eqref{def: symplectic billiard flow}, and from the definition of $\Phi^t$, one can check that the Poincar\'e map from $\kappa_0(S^-)$ to $\kappa_0(S^+)$ is given by 
     $$
     \Phi^{\left(T\circ \kappa_0^{-1}\right)(\bullet)}(\bullet): \kappa_0(S^-) \rightarrow \kappa_0(S^+).
     $$
     Additionally, for any $(y,\eta) \in \kappa_0\left(S^-\right)$, one can check that
     $$
     \Phi^{\left(T\circ \kappa_0^{-1}\right)(y,\eta)}(y,\eta) = \widetilde R(y,\eta),
     $$
     so Lemma \ref{lemma: relating billiard flow with flow from fried model} finishes the proof.
\end{proof}

%%%%%%%%%%%%%%%%%%%%%%%%%%%%%%%%%%%%%%%%%%%%%%%%%%%%%%%%%%%%%%%%%%%%%%%%%%%%%%%%
\subsubsection{Putting it together}

 Now, we would like to study a closed trajectory of the billiard flow $\varphi^t$ that has exactly $m \geq 1$ distinct points $x_0, \dots, x_{m -1} \in F \cap \{\pi^*b = 0\}$ at which $F$ and $\{\pi^*b = 0\}$ are glancing. For each $0 \leq j < m$, let $S^{\pm}_j$ be the Poincar\'e sections defined by Equation \eqref{eqn: Poinc secs near glanc point}; and set $S^-_m = S^-_0$. We choose these Poincar\'e sections such that the billiard flow $\varphi^t$ does not have any glancing points between the sections $S^+_j$ and $S^-_{j+1}$ for all $0 \leq j < m$. %\textcolor{red}{Add a figure of a closed trajectory with $m$ glancing points. Show the Poincar\'e sections $S_j^{\pm}$ on the figure as well.}

\begin{lemm}
    \label{lemma: Poinc return map for closed traj with m glancing pts}
    Suppose that $t_{\per}>0$ and $x_0 \in F \cap \{ \pi^*b = 0 \} = S_{\partial \Sigma} \Sigma$ are such that $\varphi^{t_{\per}}(x_0) = x_0$. Let $m \in \mathbb N$, and assume that there exist $t_0 = 0 < t_1 < \cdots < t_{m-1} < t_{\per}$ such that $F$ and $\{ \pi^*b = 0 \}$ are glancing at $\varphi^{t_j}(x_0) \in F \cap \{ \pi^*b = 0\} $, $0 \leq j < m$, and $\varphi^t(x_0)$ is not a glancing point of $F$ and $\{ \pi^*b = 0\}$ for any $t \in [0,t_{\per}) \setminus \{t_0, t_1, \dots, t_{m-1} \}$. Then there exists an open neighborhood $U$ in $S^-_0 \subset F \cap \{ \pi^*b > 0 \} = S \Sigma^{\circ}$ of the point $y_0^- \in S_0^-$ defined by Equation \eqref{eqn: Poinc secs near glanc point} such that the Poincar\'e return map of the billiard flow $\varphi^t$ 
    $$
    P: U \rightarrow S_0^-, \quad P(y_0^-) = y_0^-
    $$
    is given by 
    $$
    P = \Phi_{m-1} \circ R_{m-1} \circ \cdots \circ \Phi_0 \circ R_0,
    $$
    where the Poincar\'e maps $\Phi_j = \varphi^{T_j(\bullet)}(\bullet): S_j^+ \rightarrow S_{j+1}^-$ are defined by Lemma \ref{lemma: Poinc map with strict reflections between poinc secs} and the Poincar\'e maps $R_j$ by Lemma \ref{lemma: Poincare map between Poinc secs near glanc point}. Furthermore, the map
    \begin{equation*}
        \begin{split}
            &\varphi^{t_{\per}}: U \rightarrow F \cap \{ \pi^*b > 0 \} \quad \text{is given by} \\
            &\varphi^{t_{\per}}(y) = e^{T(y)H_f}P(y),
        \end{split}
    \end{equation*}
    where the function $T: U \rightarrow \mathbb R$ is continuous and $T(y_0^-) = 0$.
\end{lemm}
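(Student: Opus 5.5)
The proof concatenates the local Poincaré maps already built. It composes them along the orbit of $x_0$, which is cut into $2m$ consecutive pieces.

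First I set up the pieces. For each glancing point $x_j=\varphi^{t_j}(x_0)$, $0\le j<m$, Lemma \ref{lemma: cosphere bundle and boundary can be glanc} and Lemma \ref{lemma: relating billiard flow with flow from fried model} give a Friedlander chart $\kappa_j$. In that chart I define the sections $S_j^\pm$ by \eqref{eqn: Poinc secs near glanc point}, with $\delta$ small enough that $S_j^\pm\subset S\Sigma^\circ$ and that the trajectory between $S_j^-$ and $S_j^+$ meets no other glancing point. By Lemma \ref{lemma: Poincare map between Poinc secs near glanc point}, the map $R_j:S_j^-\to S_j^+$ sends $y_j^-$ to $y_j^+$.

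Next I handle the stretches between glancing points. The trajectory from $y_j^+$ to $y_{j+1}^-$ (indices mod $m$, with $S_m^-=S_0^-$) contains no glancing points by hypothesis. By compactness of that time interval and Corollary \ref{cor: 1st bound hit time symp lower bound start F inter G}, it meets the boundary only at finitely many transversal reflection times. Hence Lemma \ref{lemma: Poinc map with strict reflections between poinc secs} applies and gives a diffeomorphism $\Phi_j=\varphi^{T_j(\bullet)}(\bullet)$ from a neighborhood $W_j^+\subset S_j^+$ of $y_j^+$ onto a neighborhood of $y_{j+1}^-$ in $S_{j+1}^-$.

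Then I compose. Each $R_j$ is continuous on a neighborhood of $y_j^-$: in the model coordinates the shift $2\sqrt{\eta_n}$ (for $\eta_n>0$) tends to $0$ as $\eta_n\to0^+$. So, going backwards, I choose $U\subset S_0^-$ small enough that each partial composition lands in the domain of the next map. I also check that no additional boundary contact or return to $S_0^-$ occurs in between. This holds because the whole orbit segment stays in a small tubular neighborhood of $\gamma$ for $y$ near $y_0^-$, by continuity of the piecewise description. This yields $P=\Phi_{m-1}\circ R_{m-1}\circ\cdots\circ\Phi_0\circ R_0$ with $P(y_0^-)=y_0^-$. That the composition is exactly the first return map needs the time-ordered hitting description of $\varphi^t$ in \eqref{def: sympl billiard flow for all positive times} together with the identifications of Lemma \ref{lemma: relating billiard flow with flow from fried model}.

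Finally I treat the time function. The total return time is $\mathcal T(y)=\sum_j\big(\beta(T^{R_j}(\cdot),\cdot)+T_j\big)$ evaluated along the orbit. Each summand is continuous: $\beta$ is continuous in the model because $t_0(x)\to0$ and $\tau$ matches $\alpha$ at the switching locus, and $T^{R_j}$ is continuous in $\sqrt{\eta_n}$. I then set $T(y)=t_{\per}-\mathcal T(y)$, so $T$ is continuous. Since $y_0^-$ lies on $\gamma$ at a fixed time offset from $x_0$, we have $\mathcal T(y_0^-)=t_{\per}$, so $T(y_0^-)=0$. For $y$ near $y_0^-$, $|T(y)|$ is small and $P(y)\in S\Sigma^\circ$, so $\varphi^{t_{\per}}(y)=\varphi^{T(y)}(P(y))=e^{T(y)H_f}P(y)$. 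This also covers negative $T$, read as backward geodesic flow within the interior.

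I expect the main obstacle to be the continuity of the glancing pieces and the uniformity of the neighborhoods. The difficulty is ensuring that trajectories starting in $U\cap\{\eta_n\le 0\}$ versus $\{\eta_n>0\}$ (in each chart) never produce extra reflections or leave the charts. For this I would work directly in the model $\Phi^t$, where everything is explicit, and shrink $\delta$ and $U$ successively.
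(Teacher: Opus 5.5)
Your proposal is correct and follows essentially the same route as the paper. Both arguments compose the glancing maps $R_j$ and the transversal-reflection maps $\Phi_j$ along the orbit and shrink $U$ so that every composition is defined. Both then set $T(y)=t_{\per}-\sum_j(\text{time spent in }R_j+\text{time spent in }\Phi_j)$, which is continuous and vanishes at $y_0^-$ because the orbit of $y_0^-$ goes around exactly once.

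Your version adds some details the paper leaves implicit: finiteness of the transversal reflections via the uniform lower bound on the hitting time $\tau$, continuity of the glancing-piece time across $\eta_n=0$, and the reading of $e^{T(y)H_f}P(y)$ as backward interior geodesic flow when $T(y)<0$.
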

\begin{proof}
    Notice that 
    \begin{equation*}
        S_0^- \xrightarrow{R_0} S_0^+ \xrightarrow{\varphi^{T_0(\bullet)}(\bullet)} S_1^- \xrightarrow{R_1} \cdots \xrightarrow{\varphi^{T_{m-2}(\bullet)}(\bullet)} S_{m-1}^- \xrightarrow{R_{m-1}} S_{m-1}^+ \xrightarrow{\varphi^{T_{m-1}(\bullet)}(\bullet)} S_0^-,
    \end{equation*}
    where $\varphi^{T_j(\bullet)}(\bullet)$ and $R_j$ are Poincar\'e maps. See Figure \ref{fig:poincare return map}.
    \begin{figure}[ht]
        \centering
        \includegraphics[width=0.95\linewidth]{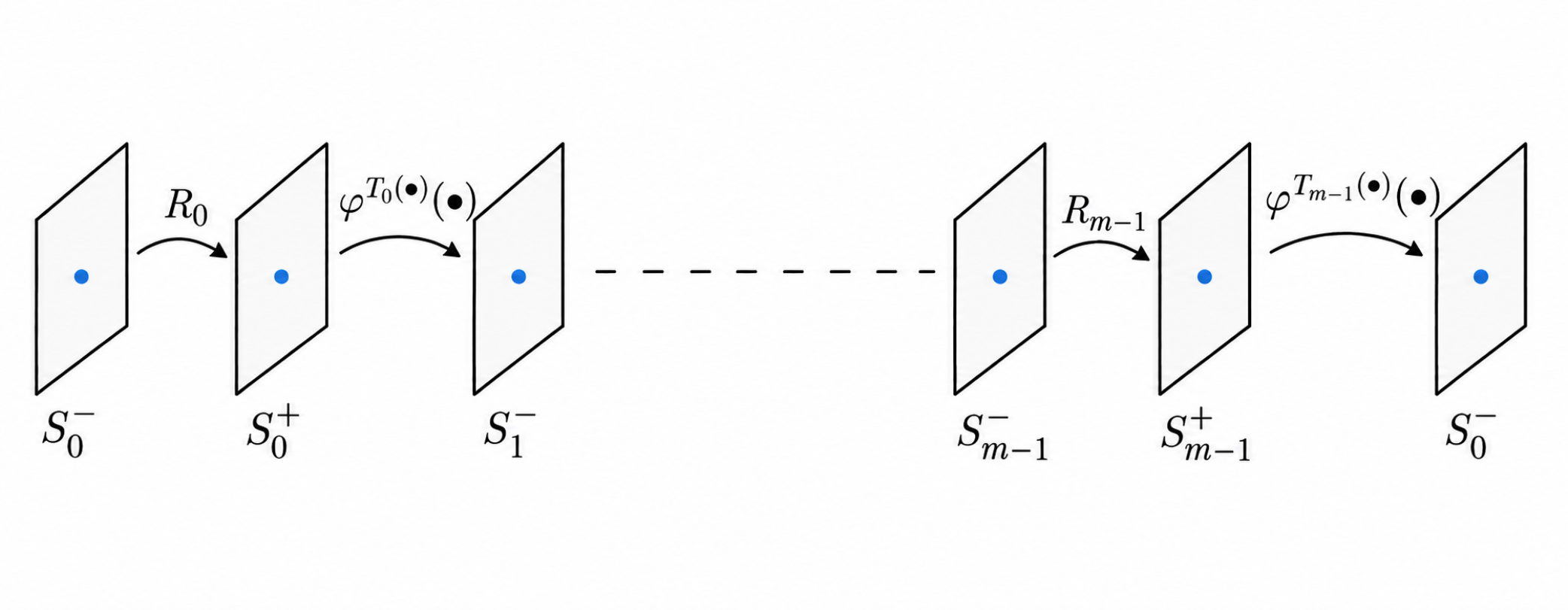}
        \caption{The Poincar\'e return map $P$ as a composition of Poincar\'e maps $R_j$ and $\varphi^{T_j(\bullet)}(\bullet)$ between consecutive Poincar\'e sections.}
        \label{fig:poincare return map}
    \end{figure}
    
    Additionally, one can check that $\varphi^{t_{\per}}(y_0^-) = y_0^-$ by noticing that $\kappa_0(x_0) = 0$ and that Lemma \ref{lemma: relating billiard flow with flow from fried model} gives us $y_0^- = \varphi^{\tilde t}(x_0)$, for some $0 < \tilde t < t_{\per}$. So, it follows that there exists an open neighborhood $U$ in $S^-_0$ of $y_0^-$ such that the Poincar\'e return map $P: U \rightarrow S^-_0$ is given by 
    $$
    P = \varphi^{T_{m-1}(\bullet)}(\bullet) \circ R_{m-1} \circ \cdots \circ \varphi^{T_0(\bullet)}(\bullet) \circ R_0, \quad \text{with} \quad P(y_0^-) = y_0^-.
    $$

    See that from the definition of $R_j$ provided in Lemma \ref{lemma: Poincare map between Poinc secs near glanc point}, we have that 
    $$
    R_j = \varphi^{\alpha_j(\bullet)}(\bullet),
    $$
    with $\alpha_j$ being continuous functions. We define the continuous function $T: S_0^- \rightarrow \mathbb R$ by 
    $$
    T(y) = t_{\per} - \sum_{j=0}^{m-1} \widehat T_j(y),
    $$
    where 
    $$
    \widehat T_j(y) = \alpha_j(Q_j(y)) + T_j(R_j (Q_j(y))), 
    $$
    $Q_0(y) = y$, and $Q_j(y) = \left(\varphi^{T_{j-1}(\bullet)}(\bullet) \circ R_{j-1} \circ \cdots \circ \varphi^{T_0(\bullet)}(\bullet) \circ R_0 \right)(y)$ for $1 \leq j < m$. One can then check that the Poincar\'e return map is given by 
    $$
    P(y) =\varphi^{t_{\per} - T(y)}(y) \quad \text{for any} \quad y \in U.
    $$
    Because $P(y_0^-) = \varphi^{t_{\per}}(y_0^-) = y_0^-$, we deduce that $T(y_0^-) = 0$. Additionally, since $P(y) \in S_0^- \subset F \cap \{ \pi^*b > 0 \}$, we get that $\varphi^{t_{\per}}(y) = e^{T(y) H_f}P(y) \in F \cap \{ \pi^*b > 0 \}$.
\end{proof}

%\textcolor{red}{!! Proof-read from here until end of section !!}

If we conjugate the Poincar\'e return map $P$ by a coordinate function, we can compute the determinant of the differential of this conjugated Poincar\'e return map minus identity, which makes sense outside the glancing set because the differential $dP$ is defined outside the glancing set. In Lemma \ref{lemma: det of Poinc ret map minus ident}, we also give an expansion of the above-mentioned determinant as we approach the glancing set.   

Let us introduce some notation to be used in Lemma \ref{lemma: det of Poinc ret map minus ident}. Suppose that the closed trajectory $\varphi^{t_{\per}}(x_0) = x_0$, with $t_{\per} > 0$ and $x_0 \in F \cap \{ \pi^*b = 0 \}$, has exactly $m$ distinct glancing points $\varphi^{t_j}(x_0)$, $0 \leq j < m$, as made precise in Lemma \ref{lemma: Poinc return map for closed traj with m glancing pts}. If $P: U \subset S^-_0 \rightarrow S^-_0$ is the Poincaré return map $P = \Phi_{m-1} \circ R_{m-1} \circ \cdots \circ \Phi_0 \circ R_0$ given by Lemma \ref{lemma: Poinc return map for closed traj with m glancing pts}, then we define 
\begin{equation}
    \label{eqn: Poinc return map in sympl coord}
    \begin{split}
        \widetilde P &= \kappa_0 \circ P \circ \kappa_0^{-1}\\
        &= \left(\kappa_0 \circ \Phi_{m-1} \circ \kappa_{m-1}^{-1} \right) \circ \left(\kappa_{m-1} \circ R_{m-1} \circ \kappa_{m-1}^{-1} \right) \circ \cdots  \left(\kappa_1 \circ \Phi_0 \circ \kappa_0^{-1} \right) \circ \left(\kappa_0 \circ R_0 \circ \kappa_0^{-1}\right) \\
        &= \widetilde \Phi_{m-1} \circ \widetilde R \circ \cdots \circ \widetilde \Phi_0 \circ \widetilde R,
    \end{split}
\end{equation}
where the coordinate $\kappa_j$ defined near the point $\varphi^{t_j}(x_0)$ is given by Lemma \ref{lemma: relating billiard flow with flow from fried model}, the map $\widetilde R$ by Lemma \ref{lemma: Poincare map between Poinc secs near glanc point}, and $\widetilde \Phi_j = \kappa_{j+1} \circ \Phi_j \circ \kappa_j^{-1}$, with $\kappa_m = \kappa_0$. 

We also define for any $y \in U \subset S_0^-$ the maps 
\begin{equation}
\label{eqn: Poincare map up to the jth Poincare sec before glancing point}
    \begin{split}
        &Q_0(y) = y, \quad Q_j(y) = \Phi_{j-1} \circ R_{j-1} \circ \cdots \circ \Phi_0 \circ R_0 (y) \quad \text{if} \quad 1 \leq j < m, \quad \text{and}\\
        &\widetilde Q_j = \kappa_j \circ Q_j \circ \kappa_0^{-1} \quad \text{for any} \quad 0 \leq j < m. 
    \end{split}
\end{equation}
For any $z \in \kappa_0(U)$, we use the maps $\widetilde Q_j$ to determine if the billiard trajectory starting at the point $\kappa_0^{-1}(z)$ hits the boundary near the glancing point $\varphi^{t_j}(x_0)$ before intersecting the Poincar\'e section $S_0^-$ again. Let $\eta_n$ be defined by Lemma \ref{lemma: relating billiard flow with flow from fried model}. If $\eta_n(\widetilde Q_j(z)) > 0$, then it means that the billiard trajectory starting at $\kappa_0^{-1}(z)$ intersects the boundary near the glancing point $\varphi^{t_j}(x_0)$ before reaching the Poincar\'e section $S_0^-$ again.  On the other hand, if $\eta_n(\widetilde Q_j(z)) < 0$, then the billiard trajectory starting at $\kappa_0^{-1}(z)$ does not intersect the boundary  near the  glancing point $\varphi^{t_j}(x_0)$ before reaching the Poincar\'e section $S_0^-$ again. This observation motivates the definition of the sets 
\begin{equation}
	\label{eqn: sets recording if we hit boundary at j glanc point}
	I_j(z) = \begin{cases}
			\{ 0, 1\} &\text{if} \quad \eta_n(\widetilde Q_j(z)) > 0 \\
			\{ 0\}  &\text{if} \quad \eta_n(\widetilde Q_j (z)) \leq 0
		\end{cases},
	\quad \text{and} \quad \mathcal I(z) = I_0(z) \times \dots \times I_{m-1}(z) \subset \{0,1 \}^m.
\end{equation}

Recall that $\kappa_0(S_0^-) \subset \{ y_1 = 0, \eta_1 = \delta \}$; therefore the conjugated Poincar\'e return map $\widetilde P$ is a function in the variables $(y',y_n, \eta', \eta_n)$, that is $\widetilde P(y',y_n,\eta',\eta_n)$, where $y = (y_1,y',y_n)$ and $\eta = (\eta_1, \eta', \eta_n)$. Similarly, we have that $\widetilde \Phi_j$ are functions in the variables $(y',y_n,\eta',\eta_n)$, where $(y,\eta)$ in this case is given by the coordinate $\kappa_j$ defined near the glancing point $\varphi^{t_j}(x_0)$. For each $z \in \kappa_0(U)$ and $\textbf i \in  \{0, 1 \}^m$, we define the matrices 
\begin{equation}
	\label{eqn: matrices in expansion of det of Poinc return map minus ident}
	B_{\textbf i}(z) = d \widetilde \Phi_{m-1} (\widetilde R (\widetilde Q_{m-1} (z))) N_{i_{m-1}} \cdots d \widetilde \Phi_0 ( \widetilde R( \widetilde Q_0(z)) ) N_{i_0},
\end{equation}
where 
$
N_0 = \text{Id}_{2n-2}
$
is the identity matrix and
$
N_1 = dh(0)
$
is a rank-1 matrix defined in terms of the function $h: \mathbb R^{2n-2} \rightarrow \mathbb R^{2n-2}$ given by $h(y',y_n, \eta',\eta_n) = (0,\eta_n,0,0)$. We also define for each $z \in \kappa_0(U)$ and $\textbf i \in \mathcal I(z) \subset \{0, 1 \}^m$ the functions 
\begin{equation}
	\label{eqn: fns giving assymptotics of determinant of diff of Poinc return map minus ident}
	\lambda_{\textbf i}(z) = \prod_{j: i_j = 1} \eta_n(\widetilde Q_j(z))^{-\frac{1}{2}}.
\end{equation}
Notice that the functions $\lambda_{\mathbf i}(z)$ are well defined because $\eta_n(\widetilde Q_j(z)) > 0$ for all $i_j = 1$, with $\textbf{i} = (i_0,\dots, i_{m-1}) \in \mathcal I(z)$.

Now we state the following:
\begin{lemm}
    \label{lemma: det of Poinc ret map minus ident}
    Suppose that $\Sigma$ has dimension $n=2$ and that the Poincar\'e return map $P: U \rightarrow S_0^-$ is given Lemma \ref{lemma: Poinc return map for closed traj with m glancing pts}. If $\widetilde P$ is the conjugated Poincar\'e return map defined by Equation \eqref{eqn: Poinc return map in sympl coord}, then 
    \begin{equation}
        \label{eqn: det of Poinc ret map minus ident}
        \begin{split}
            &\det\left(d\widetilde P-I\right)(z) = 2 - \tr d\widetilde P(z) \\
            &\text{for any} \quad z \in \kappa_0(U) \quad \text{with} \quad \eta_2 \left( \widetilde Q_j(z) \right) \neq 0 \quad \text{for all} \quad 0 \leq j < m,
        \end{split}
    \end{equation}
    where $\eta_2$ is given by Lemma \ref{lemma: relating billiard flow with flow from fried model} and $\widetilde Q_j(z)$ by Equation \eqref{eqn: Poincare map up to the jth Poincare sec before glancing point}.  Furthermore, we have
    \begin{equation}
    \label{eqn: trace of poinc return map}
        \begin{split}
            &d \widetilde P(z) = \sum_{\textbf i \in \mathcal I(z)}  \lambda_{\textbf i}(z)  B_{\textbf i}(z) \quad \text{and} \quad \tr d\widetilde P(z)  = \sum_{\textbf i \in \mathcal I(z)}  \lambda_{\textbf i}(z) \tr B_{\textbf i}(z), 
        \end{split}
    \end{equation}
    where $\mathcal I(z)$ is defined by Equation \eqref{eqn: sets recording if we hit boundary at j glanc point}, $\lambda_{\textbf i}(z)$ by Equation \eqref{eqn: fns giving assymptotics of determinant of diff of Poinc return map minus ident}, and $B_{\textbf i}(z)$ by Equation \eqref{eqn: matrices in expansion of det of Poinc return map minus ident}.
\end{lemm}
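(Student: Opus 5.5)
The proof splits into two independent parts: a determinant identity coming from symplecticity in dimension two, and a chain-rule computation of $d\widetilde P$ using the explicit form of $\widetilde R$ from Lemma \ref{lemma: Poincare map between Poinc secs near glanc point}.

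For \eqref{eqn: det of Poinc ret map minus ident}, take $n=2$. The section $\kappa_0(S_0^-)$ is then parametrized by $(y_2,\eta_2)$, and $d\widetilde P(z)$ is a $2\times 2$ matrix. For any $2\times 2$ matrix $A$ one has
$$\det(A-I)=\det A-\tr A+1.$$
It therefore suffices to show $\det d\widetilde P(z)=1$ wherever $d\widetilde P$ is defined, that is, when $\eta_2(\widetilde Q_j(z))\neq 0$ for all $j$. We argue this in two steps.

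First, each $\widetilde \Phi_j$ is the Poincaré map of the Hamiltonian flow of $y_1$ (restricted to $\{y_1=0\}$), composed with transversal reflections $R$. It goes between the sections $\{y_1=0,\eta_1=-\delta\}$ and $\{y_1=0,\eta_1=\delta\}$, written in the symplectic coordinates $\kappa_j,\kappa_{j+1}$. Poincaré maps of Hamiltonian flows between transversal sections in a fixed energy surface preserve the induced symplectic form $dy'\wedge d\eta'$; here this form is $dy_2\wedge d\eta_2$. The reflection $R$ preserves it as well, since by Remark \ref{rmk: refl oper same as moving along Ham flow of pi b} it is a Hamiltonian flow map of $\pi^*b$ followed by restriction.

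Second, $\widetilde R$ in the variables $(y_2,\eta_2)$ is either the identity or $(y_2,\eta_2)\mapsto (y_2+2\sqrt{\eta_2},\eta_2)$. Its Jacobian in the second case is
$$\begin{pmatrix}1&\eta_2^{-1/2}\\0&1\end{pmatrix},$$
which has determinant $1$. Hence $\det d\widetilde P=1$.

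For \eqref{eqn: trace of poinc return map}, which holds in any dimension, apply the chain rule to $\widetilde P=\widetilde\Phi_{m-1}\circ\widetilde R\circ\cdots\circ\widetilde\Phi_0\circ\widetilde R$ evaluated along the orbit $\widetilde Q_j(z)$. At the $j$th glancing step, $d\widetilde R(\widetilde Q_j(z))$ equals $N_0=I$ if $\eta_n(\widetilde Q_j(z))<0$. If $\eta_n(\widetilde Q_j(z))>0$, it equals
$$I+\eta_n(\widetilde Q_j(z))^{-1/2}N_1,$$
since $\partial_{\eta_n}(2\sqrt{\eta_n})=\eta_n^{-1/2}$ and $N_1=dh(0)$ is exactly the matrix with a single $1$ in the $(y_n,\eta_n)$ entry. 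In the first case this matches the sum over $I_j(z)=\{0\}$; in the second, over $I_j(z)=\{0,1\}$ with weights $1$ and $\eta_n^{-1/2}$. Expanding the product $\prod_j d\widetilde\Phi_j\,(N_0+\epsilon_j\eta_n^{-1/2}N_1)$ multilinearly gives exactly $\sum_{\mathbf i\in\mathcal I(z)}\lambda_{\mathbf i}(z)B_{\mathbf i}(z)$, and taking traces gives the second identity by linearity.

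The main obstacle is the symplecticity claim for the Poincaré maps $\widetilde\Phi_j$ expressed in the coordinates $(y',y_n,\eta',\eta_n)$. The plan is to use the standard fact that for a Hamiltonian $H$ with $H_H$ transversal to a section $\Sigma\subset\{H=0\}$, the restriction of $\sigma$ to $\Sigma$ is the reduced symplectic form and is preserved by the flow. On $\{y_1=0,\eta_1=\mathrm{const}\}$, the form $\sigma$ restricts to $dy'\wedge d\eta'+dy_n\wedge d\eta_n$. Reflections are handled in the same way, because they are time-maps of the Hamiltonian flow of $\pi^*b$ between points of $F$. Note that only $\det d\widetilde P=1$ is needed for the first part, whereas the second part is purely algebraic.
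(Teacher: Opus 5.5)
Your proposal is correct and follows the paper's proof. The ingredients match: the $2\times 2$ identity $\det(A-I)=\det A-\tr A+1$; $\det d\widetilde P=1$ from the unipotent Jacobian of $\widetilde R$ together with preservation of $dy_2\wedge d\eta_2$ by the Poincar\'e maps $\widetilde\Phi_j$; and a multilinear expansion of the chain rule using $d\widetilde R=N_0+\eta_n^{-1/2}N_1$ on $\{\eta_n>0\}$. Your explicit check that the transversal reflections $R$ also preserve the induced symplectic form, viewing them as Poincar\'e maps of $H_{\pi^*b}$ inside $G$, makes precise a step the paper leaves implicit.
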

\begin{proof}
   From the assumption that $\Sigma$ is a 2 dimensional manifold,  we have that the differential $d \widetilde P(z)$ is a $2 \times 2$ matrix, so
   $$
   \det\left(d\widetilde P(z) - I\right) = \det\left(d \widetilde P(z)\right) - \tr\left(d \widetilde P(z)\right) + 1.
   $$
   Using the definition of $\widetilde P$ given in Equation \eqref{eqn: Poinc return map in sympl coord} and the definition of $\widetilde R$ provided in Lemma \ref{lemma: Poincare map between Poinc secs near glanc point}, we get that 
   \begin{equation}
   \label{eqn: interm diff of P}
       \begin{split}
        &d \widetilde P(z) = \left(d \widetilde \Phi_{m-1} \circ d \widetilde R \right) \circ \cdots \circ \left(d \widetilde \Phi_0 \circ d\widetilde R\right)(z), \quad \text{which implies that} \\
        &\det(d \widetilde P(z)) = \prod_{j=0}^{m-1} \det\left(d \widetilde \Phi_j\left(\widetilde R \circ \widetilde Q_j(z)\right)\right)
       \end{split}
   \end{equation}
   by noticing that for any $(y_2,\eta_2)$ with $\eta_2 \neq 0$ the differential 
   \begin{equation}
   \label{eqn: diff of tilde R}
       d \widetilde R\left( y_2, \eta_2\right) = \begin{cases}
       \begin{pmatrix}
       1 &  0 \\
       0 &  1
       \end{pmatrix} &\text{if} \quad \eta_2 < 0 \\
       \begin{pmatrix}
       1 &  \frac{1}{\sqrt{\eta_2}}\\
       0 &  1
       \end{pmatrix}  &\text{if} \quad \eta_2 > 0 
       \end{cases}
       \quad \text{has} \quad \det( \widetilde R(y_2, \eta_2) ) = 1,
   \end{equation}
   where we view $\widetilde R$ as a function in the $y_2$ and $\eta_2$ variables. 
   
   Equation \eqref{eqn: Poinc return map in sympl coord} defines $\widetilde Q_j = \kappa_{j+1} \circ Q_j \circ \kappa_j^{-1}$, with the symplectomorphisms $\kappa_j$ being defined by Lemma \eqref{eqn: relating billiard flow with flow from fried model}. Moreover, using Lemma \ref{lemma: Poinc return map for closed traj with m glancing pts} and Lemma \ref{lemma: Poinc map with strict reflections between poinc secs}, we have that the Poincar\'e maps $\Phi_j$ are given by 
   $$
   \Phi_j = e^{T_{j,l}(\bullet)H_f}(\bullet) \circ R \circ \cdots  \circ e^{T_{j,1}(\bullet)H_f}(\bullet) \circ R \circ e^{T_{j,0}(\bullet)H_f}(\bullet) \quad \text{for some} \quad l \geq 0,
   $$
   where $e^{T_{j,k}(\bullet)H_f}(\bullet)$ are Poincar\'e maps of the geodesic flow in $\widetilde M$ and $R$ the reflection operator defined by \eqref{eqn: reflection operator}. Each of these Poincar\'e maps $e^{T_{j,k}(\bullet)H_f}(\bullet)$ preserves the symplectic structure induced on the Poincar\'e sections associated with the Poincar\'e map $e^{T_{j,k}(\bullet)H_f}(\bullet)$ (See \cite[\S 1]{KT72} for more details). Therefore, we get that 
   $$
   \det\left(d \widetilde \Phi_j\left(\widetilde R_j \circ \widetilde Q_j(z)\right)\right)=1.
   $$

   For each $0 \leq j < m$, notice that Equation \eqref{eqn: diff of tilde R} gives that 
   $$
   d \widetilde R\left( \widetilde Q_j(z)\right) = \sum_{i_j \in I_j(z)} \eta_2(\widetilde Q_j(z))^{-\frac{i_j}{2}}N_{i_j},
   $$
   where $I_j(z)$ are defined by Equation \eqref{eqn: sets recording if we hit boundary at j glanc point} and $N_{i_j}$ by Equation \eqref{eqn: matrices in expansion of det of Poinc return map minus ident}.
   therefore the above equality together with Equation \eqref{eqn: interm diff of P} gives Equation \eqref{eqn: trace of poinc return map}. 
\end{proof}

\section{Non-degeneracy assumption}
\label{sec: non-deg assumption}
In this section, we show that the determinant in Lemma \ref{lemma: det of Poinc ret map minus ident} of the identity minus the differential of the Poincar\'e return map of the billiard flow $\varphi^t$  does not vanish under certain assumptions on $(\Sigma,g)$, which will be specified later.

\subsection{Horizontal and vertical decomposition and Jacobi fields}

We first recall that the tangent space of $T \widetilde \Sigma$ admits a horizontal and vertical decomposition as shown in \cite[\S 1.3]{GP99}. Specifically, if $x_0 = (p_0,v_0) \in T \widetilde \Sigma$, $w \in T_{x_0} \left( T \widetilde \Sigma \right)$, and $\sigma(s) = \left(p(s),v(s)\right)$ is a curve in $ T \widetilde\Sigma$ with $\sigma(0) = x_0$ and $\dot \sigma(0) = w$, then the horizontal and vertical decomposition of $w$ is given by 
\begin{equation}
    \label{eqn: hor and vert decomp}
    w = \left( \dot p(0), \nabla_{\dot p} v (0) \right),
\end{equation}
where $\nabla$ is the Levi--Civita connection on $(\widetilde \Sigma,g)$. Let $R$ be the reflection operator given by Equation \eqref{eqn: reflection operator}  and $\sigma(s)$ a curve in $T_{\partial \Sigma} \widetilde \Sigma$. Then the horizontal and vertical decomposition of $dR\left(\dot \sigma(0) \right)$ is given by

\begin{equation}
    \label{eqn: differ of R in horiz and vert decomp}
    dR\left(\dot \sigma(0) \right) = \left(\dot p (0), \nabla_{\dot p} v (0) - 2 \nabla_{\dot p} \big(\left\langle v,  n \right\rangle_g \big)(0) n(0) - 2 \left\langle v,  n \right\rangle_g(0) \nabla_{\dot p} n (0) \right),
\end{equation}
where $n(s) = \frac{\nabla b (p(s))}{\| \nabla b (p(s)) \|_{g(p(s))}}$.

Let $e^{tH_f}$ be the geodesic flow on $S\widetilde \Sigma$. For any $x_0 \in S \widetilde \Sigma$ and $w \in T_{x_0} S\widetilde \Sigma $, if $\sigma(s) = (p(s),v(s))$ is a curve in $S \widetilde \Sigma$ with $\sigma(0) = x_0$ and $\dot \sigma (0) = w$, then \cite[\S 1.5]{GP99} shows that the horizontal and vertical decomposition of $d(e^{tH_f})(w)$ is 
    \begin{equation}
        \label{eqn: jac field and differ of geodesic flow}
        d(e^{tH_f})(w) = \left(J(t),\dot J(t)\right), \quad J(0) = \dot p(0), \quad \dot J(0) = \nabla_{\dot p} v (0),
    \end{equation}
    where $J(t)$ is a Jacobi field along $\gamma(t) = \left(\pi \circ e^{tH_f}\right)(x_0)$ and $\dot J = \nabla_{\dot \gamma}J$. Recall that a Jacobi field $J(t)$ satisfies 
    \begin{equation}
        \label{eqn: jacob field equation}
        \left \langle \nabla_{\dot \gamma} \nabla_{\dot \gamma} J, J \right \rangle + \left( \| J \|^2_g - \left\langle J, \dot \gamma \right\rangle_g^2 \right) K(J, \dot \gamma) = 0,
    \end{equation}
    where $K$ is the sectional curvature of $(\widetilde \Sigma, g)$.

Next, we construct a Jacobi field on a segment of the billiard flow $\varphi^t$ with no glancing points.

 \begin{lemm}
     \label{lemma: billiard jacobi field with transversal reflections}
     Suppose that $\tilde t > 0$,  $x_0, x_1 \in F \cap \{ \pi^*b > 0 \} = S \Sigma^{\circ}$, $t_0 = 0 < t_1 < \dots < t_l < t_{l+1} = \tilde t$, and the Poincar\'e map $\varphi^{T(\bullet)}(\bullet): U_0 \rightarrow U_1$ of the form 
     $$
     \varphi^{T(\bullet)}(\bullet)= e^{T_l(\bullet)H_f}(\bullet) \circ R \circ \cdots \circ e^{T_1(\bullet)H_f}(\bullet) \circ R \circ e^{T_0(\bullet)H_f}(\bullet) 
     $$ 
     are given by Lemma \ref{lemma: Poinc map with strict reflections between poinc secs}. For any $w \in T_{x_0}U_0$, there exists a vector field $W(t) \in T_{\varphi^{t}(x_0)}F$ defined for all $t \in [0,\tilde t\,]$ and with the following properties: 
     \begin{enumerate}
         \item $W(0) = w$ and $W(\tilde t) - c H_f(x_1) = d \left( \varphi^{T(\bullet)}(\bullet) \right)(w) \in T_{x_1}U_1$ for some $c \in \mathbb R$,
         \item $W(t)$ is smooth for all $t \in [0,\tilde t \,] \setminus \{ t_1 , \dots , t_l \}$ and discontinuous for $t \in \{ t_1 , \dots, t_l \}$,
         \item $W(t_j+0) \in T_{\varphi^{t_j}(x_0)} \left( F \cap \{ \pi^*b = 0 \}  \right)$ for all $1 \leq j \leq l$, 
         \item For each $1 \leq j \leq l$ there exist $c_j \in \mathbb R$ such that 
         $$
         W(t_j+ 0) = dR \left( W(t_j - 0) - c_j H_f(\varphi^{t_j-0}(x_0)) \right), \quad \text{and}
         $$
         \item $W(t)$ has a horizontal and vertical decomposition $W(t) = (J(t), \dot J(t))$ for all $t \in [0,\tilde t \,] \setminus \{t_1, \dots , t_l\}$, where $J(t)$ is a Jacobi field along $\left(\pi \circ \varphi^t\right)(x_0)$.
     \end{enumerate}
 \end{lemm}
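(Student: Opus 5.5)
The plan is to build $W$ piecewise, one free segment at a time, following the factorization of the Poincar\'e map from Lemma \ref{lemma: Poinc map with strict reflections between poinc secs}. On each segment the billiard flow is just the geodesic flow $e^{tH_f}$. So on the first segment we set
\[
W(t)=d(e^{tH_f})(x_0)\,w \quad\text{for } t\in[0,t_1).
\]
This is smooth in $t$ and tangent to $F$, since $e^{tH_f}$ preserves $F=\{f=0\}$. By \eqref{eqn: jac field and differ of geodesic flow} it has the form $(J(t),\dot J(t))$ with $J$ a Jacobi field along $\pi\circ e^{tH_f}(x_0)$. This gives (5) on $[0,t_1)$ and $W(0)=w$.

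At the reflection time $t_1$ we write $y_1^-=\varphi^{t_1-0}(x_0)\in G_1^-=F\cap\{\pi^*b=0\}\cap\{H_f\pi^*b<0\}$. Since $H_f\pi^*b(y_1^-)\neq 0$, the flow $H_f$ is transversal to $G_1^-$ inside $F$. Hence $T_{y_1^-}F=T_{y_1^-}G_1^-\oplus\mathbb R H_f(y_1^-)$, and there is a unique $c_1$ with $W(t_1-0)-c_1H_f(y_1^-)\in T_{y_1^-}G_1^-$. Explicitly,
\[
c_1=\frac{d(\pi^*b)(W(t_1-0))}{H_f\pi^*b(y_1^-)}.
\]
We then define
\[
W(t_1+0)=dR\bigl(W(t_1-0)-c_1H_f(y_1^-)\bigr).
\]
Here $R:G_1^-\to G_1^+$ is a diffeomorphism and $G_1^+\subset F\cap\{\pi^*b=0\}$, so this vector lies in $T_{\varphi^{t_1}(x_0)}(F\cap\{\pi^*b=0\})$, which gives (3) and (4). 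On $[t_1,t_2)$ we propagate again by $W(t)=d(e^{(t-t_1)H_f})W(t_1+0)$, and we iterate this for $j=2,\dots,l$. On the last segment $[t_l,\tilde t\,]$ we propagate by the geodesic flow up to $\tilde t$. Property (2) then holds by construction: smoothness between the $t_j$ comes from the smoothness of the flow, and in general $W$ jumps at the $t_j$.

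The remaining step is (1), and the key identity is the chain rule for a map of the form $x\mapsto e^{T(x)H_f}(x)$:
\[
d\bigl(e^{T(\bullet)H_f}(\bullet)\bigr)(u)=d(e^{T(x)H_f})u+dT(u)\,H_f(e^{T(x)H_f}x).
\]
We apply this to each factor $e^{T_k(\bullet)H_f}(\bullet)$ in the composition from Lemma \ref{lemma: Poinc map with strict reflections between poinc secs}. Because the flow pushes $H_f$ to $H_f$, the differential of the intermediate map $G_j^+\to G_{j+1}^-$ applied to $u$ equals $d(e^{T_jH_f})u$ minus the multiple of $H_f$ that projects it back onto $T G_{j+1}^-$. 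That is precisely the projection producing $c_{j+1}$. Composing with $dR$ then shows by induction that $W(t_j+0)$ equals the differential of the partial Poincar\'e map $U_0\to V_j^+$ applied to $w$. On the final segment the same computation gives $d(\varphi^{T(\bullet)}(\bullet))(w)=W(\tilde t)-cH_f(x_1)$, with $c$ determined by $T_{x_1}U_1$ being transversal to $H_f$; this is (1).

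The main obstacle is bookkeeping rather than analysis. We must check carefully that the projection along $H_f$ onto $TG_{j+1}^-$ at the incoming point agrees with the differential of the intermediate Poincar\'e map, including the first segment. There the initial section is $U_0\subset S_0$ rather than $G_0^+$, and $w\in T_{x_0}U_0$ is used directly, so no correction at $t=0$ is needed. The horizontal/vertical form in (5) involves no reflection issue, since it is only claimed away from the $t_j$, where $W$ is the image of a geodesic-flow differential.
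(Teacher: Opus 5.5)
Your proposal is correct and is essentially the paper's proof. On each free segment you propagate by $d(e^{(t-t_j)H_f})$; at each reflection you subtract the multiple of $H_f$ that moves the incoming vector into $T(F\cap\{\pi^*b=0\}\cap\{H_f\pi^*b<0\})$ and apply $dR$; and you identify the result with the differential of the factorized Poincar\'e map through the chain rule for $x\mapsto e^{T_j(x)H_f}(x)$.

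The only cosmetic difference is how the constants are defined. You define them intrinsically by the transversal projection, $c_j = d(\pi^*b)(W(t_j-0))/H_f\pi^*b(\varphi^{t_j-0}(x_0))$, whereas the paper reads them off the chain rule as $-dT_{j-1}(W(t_{j-1}+0))$. The two agree by uniqueness of the splitting $TF = TG_j^-\oplus\mathbb R H_f$ that you invoke.
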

\begin{proof}
    Let $1 \leq j \leq l$. From the definition of $t_j$ in Lemma \ref{lemma: Poinc map with strict reflections between poinc secs}, we have that $\varphi^{t_j}(x_0) \in \left( F \cap \{ \pi^*b = 0 \} \right) \setminus \{ H_f \pi^*b = 0 \} = S_{\partial \Sigma} \Sigma \setminus S \partial \Sigma$. As a result, we have that $\varphi^{t_j}(x_0) \in F \cap \{ \pi^*b = 0 \} \cap \{ H_f \pi^*b > 0 \}$ and $\varphi^{t_j-0}(x_0) \in F \cap \{ \pi^*b = 0 \} \cap \{ H_f \pi^*b < 0 \}$ with $\varphi^{t_j}(x_0) = R \left(\varphi^{t_j-0}(x_0)\right)$.

    Define the hypersurfaces $V_j$ in $F$ by $V_0 = U_0$ and $V_j = F \cap \{ \pi^*b = 0 \} \cap \{ H_f \pi^*b > 0 \}$ for any $1 \leq j \leq l$. Now take $0 \leq j \leq l$ and $w_j \in T_{\varphi^{t_j}(x_0)} V_j$. We define the vector field
    \begin{equation}
    \label{eqn: recurs def of Jacob field}
        W_j(t,w_j) = d\left(e^{\left(t-t_j\right)H_f}\right)(w_j) \quad \text{for all}  \quad t_j \leq t < t_{j+1}. 
    \end{equation}
    Notice that Lemma \ref{lemma: Poinc map with strict reflections between poinc secs} gives $W_j(t,w_j) \in T_{\varphi^t(x_0)} F$ for all $t_j \leq t < t_{j+1}$; and Equation \eqref{eqn: jac field and differ of geodesic flow} implies that the horizontal and vertical decomposition of $W_j(t,w_j)$ is $W_j(t,w_j) = \left(J_j(t) , \dot J_j(t) \right)$, where $J_j(t)$ is a Jacobi field along $\pi \circ \varphi^t(x_0)$ for all $t \in (t_j, t_{j+1})$.  Additionally, we have
    \begin{equation}
        \label{eqn: diff of interm Phi_jk and diff of geod flow}
        \begin{split}
        d \left( e^{T_j(\bullet)} (\bullet) \right)(w_j) &= d(T_j)(w_j) H_f(\varphi^{t_{j+1}-0}(x_0)) +  d\left(e^{\left(t_{j+1}-t_{j}\right)H_f}\right)(w_{j}) \\
        &= W_j \left(t_{j+1}-0 , w_j\right) - c_j H_f(\varphi^{t_{j+1}-0}(x_0)), 
        \end{split}
    \end{equation}
    where in the first equality we used that $T_j\left(\varphi^{t_j}(x_0)\right) = t_{j+1} - t_j$ (refer to Lemma \ref{lemma: Poinc map with strict reflections between poinc secs}) and $c_j = -dT_j(w_j)$. If $0 \leq j < l$, notice that 
    $$
    W_j\left(t_{j+1}-0 , w_j\right) - c_j H_f \in T_{\varphi^{t_{j+1}-0}(x_0)} \left(F \cap \{ \pi^*b = 0 \} \cap \{ H_f \pi^*b < 0\}\right),
    $$
    so we define  
    \begin{equation}
        \label{eqn: right limit of Jacob field from left lim using R}
        \begin{split}
            W_j(t_{j+1},w_j) &= \begin{cases}
                dR \left( W_j(t_{j+1}-0,w_j) - c_j H_f \right) \in T_{\varphi^{t_{j+1}}(x_0)} V_{j+1} &\text{if} \quad 0 \leq j < l\\
                W_j(t_{j+1} -0, w_j)  &\text{if} \quad j = l.
            \end{cases}
        \end{split}
    \end{equation}
    
    Starting with an initial tangent vector $w \in T_{x_0} U_0$, Equations \eqref{eqn: recurs def of Jacob field} and \eqref{eqn: right limit of Jacob field from left lim using R} allow us to recursively define a vector field $W(t) \in T_{\varphi^t(x_0)}F$ for all $t \in \left[0,\tilde t \right]$ as follows:
    \begin{equation*}
        W(t) = \begin{cases}
            w & \text{if} \quad t=0 \\
            W_j(t, W(t_j)) & \text{if} \quad 0 \leq j \leq l \quad \text{and} \quad t_j \leq t \leq t_{j+1} 
        \end{cases}
    \end{equation*}
    One can check that this vector field $W(t)$ satisfies properties $(1)$ through $(5)$ outlined in Lemma \ref{lemma: billiard jacobi field with transversal reflections}. 
\end{proof}

Finally, we construct a Jacobi field on a segment of the billiard flow $\varphi^t$ that has exactly one glancing point. 
\begin{lemm}
    \label{lemma: Jac field on bill segment with one glanc pt}
    Assume that $\Sigma$ has dimension $n=2$, and let $x_0 \in F \cap \{ \pi^*b = 0 \} \cap \{ H_f \pi^*b = 0 \} = S \partial \Sigma$. Suppose that the map $N_0: T_{\tilde z_0} \left( \kappa_0\left(S^-\right)\right) \rightarrow T_{\widetilde R_0 (\tilde z_0)} \left( \kappa_0\left(S^+\right)\right)$ is defined by Equation \eqref{eqn: matrices in expansion of det of Poinc return map minus ident}, where $\tilde z_0 = e^{-\delta H_{y_1}}(0)$, $\kappa_0$ is defined by Lemma \ref{lemma: relating billiard flow with flow from fried model}, and $S^{\pm}$, $\delta>0$, and $y_1$ by Equation \eqref{eqn: Poinc secs near glanc point}. If $\beta$ is the function given by Lemma \ref{lemma: relating billiard flow with flow from fried model}, $z_0 = \kappa_0^{-1}(\tilde z_0) \in S^-$, and $\tilde t = \beta\left(2\delta,z_0\right)$, then for any $w \in T_{z_0}S^-$ there exists a smooth vector field $W(t) \in T_{\varphi^t\left(z_0\right)}F$ defined for all $t \in [0, \tilde t \,]$ such that:
    \begin{enumerate}
        \item $W(0) = w$ and $W(\tilde t\,) - c H_f(\varphi^{\tilde t}(z_0)) = \left(d\kappa^{-1}_0 \circ N_0 \circ d\kappa_0 \right)(w) \in T_{\varphi^{\tilde t}(z_0)}S^+$ for some $c \in \mathbb R$,
        \item $W(t) = d(e^{tH_f})(w)$ for all $t \in [0,\tilde t \,]$,
        \item The horizontal and vertical decomposition of $W(t)$ is $W(t) = (J(t),\dot J(t))$, where $J(t)$ is a Jacobi field along $\pi \circ \varphi^t(z_0)$.
    \end{enumerate}
\end{lemm}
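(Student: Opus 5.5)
The plan is to define $W(t) := d(e^{tH_f})(w)$ for $t \in [0,\tilde t\,]$, and then check properties (1)–(3) in turn.

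\textbf{The base trajectory is a geodesic.} Through $z_0$ the billiard trajectory is $\kappa_0^{-1}\circ\Phi^t(\tilde z_0)$. The reference point $\tilde z_0 = e^{-\delta H_{y_1}}(0)$ has $\eta_n = 0$, so the second case of \eqref{def: symplectic billiard flow} applies: $\Phi^t(\tilde z_0) = e^{tH_{y_1}}(\tilde z_0)$, and there is no reflection. By Lemma \ref{lemma: relating billiard flow with flow from fried model}, in the ``else'' case of \eqref{eqn: time shift when relating billiard flow and flow from fried model} we have $\beta = \alpha$ and $\varphi^{\alpha(t,z_0)}(z_0) = e^{\alpha(t,z_0)H_f}(z_0)$. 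Hence $\varphi^t(z_0) = e^{tH_f}(z_0)$ for $t \in [0,\tilde t\,]$, the trajectory merely touching $S\partial\Sigma$ at $x_0$.

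So $W(t) = d(e^{tH_f})(w)$ is a smooth vector field along $\varphi^t(z_0)$, tangent to $F$ since $e^{tH_f}$ preserves $F$. This gives (2). By \eqref{eqn: jac field and differ of geodesic flow}, its horizontal and vertical decomposition is $(J(t),\dot J(t))$ with $J$ a Jacobi field, which gives (3). Clearly $W(0) = w$.

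\textbf{The Poincar\'e map of the unreflected branch.} Consider $G(z) := \kappa_0^{-1}\circ\widetilde R_{\rm free}\circ\kappa_0(z)$, where $\widetilde R_{\rm free}(0,y',y_n,\delta,\eta',\eta_n) = (0,y',y_n,-\delta,\eta',\eta_n)$ is the second branch of $\widetilde R$ in Lemma \ref{lemma: Poincare map between Poinc secs near glanc point}. This is smooth on all of $\kappa_0(S^-)$, and
\[
\widetilde R_{\rm free} = e^{2\delta H_{y_1}}\big|_{\kappa_0(S^-)}.
\]
Its differential in the coordinates $(y',y_n,\eta',\eta_n)$ is the identity, which is $N_0$. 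Using Lemma \ref{lemma: relating billiard flow with flow from fried model}, $\kappa_0^{-1}\circ e^{2\delta H_{y_1}}\circ\kappa_0(z) = e^{\alpha(2\delta,z)H_f}(z)$, so $G(z) = e^{\alpha(2\delta,z)H_f}(z)$ on $S^-$.

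Differentiating at $z_0$ with the chain rule, as in the proof of Lemma \ref{lemma: billiard flow between two transv hypersurf},
\[
dG(w) = d(e^{\tilde tH_f})(w) + d_z\alpha(2\delta,\cdot)(w)\, H_f(\varphi^{\tilde t}(z_0)) = W(\tilde t\,) - c\,H_f,
\qquad c = -d_z\alpha(2\delta,z_0)(w).
\]
Also $dG(w) = d\kappa_0^{-1}\, N_0\, d\kappa_0(w) \in T S^+$, which is (1).

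\textbf{Main obstacle.} The delicate point is justifying that on the $\eta_n\le 0$ side the identification $\beta = \alpha$ holds uniformly, so that the smooth extension $e^{\alpha(2\delta,\cdot)H_f}$ is the right object to differentiate at $z_0$, which lies on the discontinuity locus $\{\eta_n = 0\}$ of $R_0$. This is handled by noting that $N_0$ is by definition the differential of the $\eta_n\le0$ branch, which extends smoothly across $\{\eta_n = 0\}$; for the $\eta_n>0$ branch the $1/\sqrt{\eta_n}$ term in \eqref{eqn: diff of tilde R} blows up, so there is no such extension. Finally, $\alpha$ is smooth in $(t,x)$ on $U$, which makes the differentiation legitimate.
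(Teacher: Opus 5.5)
Your proposal is correct and follows the paper's proof essentially step for step. You define $W(t)=d(e^{tH_f})(w)$ and use $\Phi^t(\tilde z_0)=e^{tH_{y_1}}(\tilde z_0)$ to get $\tilde t=\alpha(2\delta,z_0)$. You then differentiate $z\mapsto e^{\alpha(2\delta,z)H_f}(z)=\kappa_0^{-1}\circ e^{2\delta H_{y_1}}\circ\kappa_0(z)$ at $z_0$, whose differential in the $(y',y_n,\eta',\eta_n)$ coordinates is $N_0$.

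Your ``main obstacle'' is in fact moot. That map is defined through the Hamiltonian flow $e^{2\delta H_{y_1}}$ and never through the billiard flow. Also, $R_0$ is continuous across $\{\eta_n=0\}$; only its differential blows up there.
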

\begin{proof}
    Let the billiard flow $\Phi^t$ be defined by Equation \eqref{def: symplectic billiard flow}. From the definition of $\Phi^t$, one can see that for all $t \in [0,2\delta]$ we have 
    $$
    \Phi^t(\tilde z_0) = e^{tH_{y_1}}(\tilde z_0) \quad \text{with} \quad \Phi^{2\delta}(\tilde z_0) \in \kappa_0(S^+).
    $$
    For each $x \in S^-$, Lemma \ref{lemma: relating billiard flow with flow from fried model} says that for small time $t$ we have
    $$
    \kappa_0^{-1} \circ e^{tH_{y_1}} \circ \kappa_0(x) = e^{\alpha(t,x)H_f}(x) \quad \text{and} \quad \kappa_0^{-1} \circ \Phi^{t} \circ \kappa_0(x) = \varphi^{\beta(t,x)}(x), 
    $$
    where $\alpha$ is smooth. As a result, we see that for all $t \in [0,2\delta]$ we get
    $$
    \varphi^{\beta(t,z_0)}(z_0) = e^{\alpha(t,z_0)H_f}(z_0) \quad \text{with} \quad \varphi^{\beta(2\delta,z_0)}(z_0) \in S^+ \quad \text{and} \quad \beta(2\delta,z_0) = \alpha(2\delta,z_0).
    $$

    Now, define the vector field $W(t) \in T_{\varphi^t(z_0)}F$ for all $t \in [0,\tilde t = \beta(2 \delta,z_0)]$ by 
    \begin{equation}
        \label{eqn: vector field near glancing point}
        W(t) = d(e^{tH_f})(w).
    \end{equation}
    Notice that 
    $$
    \begin{aligned}
        ( d\kappa_0^{-1} \circ d\left(e^{2 \delta H_{y_1}}|_{\kappa_0(S^-)}\right) \circ d \kappa_0 )(w) &= d \left( e^{\alpha(2 \delta, \bullet)H_f}(\bullet) \right)(w) \\
        &= \left( W(\tilde t \,) + d(\alpha(2 \delta,\bullet))(w) H_f(\varphi^{\tilde t}(z_0)) \right) \in T_{\varphi^{\tilde t}(z_0)} S^+. 
    \end{aligned}
    $$
    Recall that $\kappa_0(S^-) \subset \{y_1 = 0, \eta_1 = \delta \}$, so for $(0,y_2,\delta,\eta_2) \in \kappa_0(S^-)$, one has that 
    $$
    e^{2\delta H_{y_1}}(0,y_2,\delta,\eta_2) = (0,y_2,-\delta,\eta_2) \in \kappa_0(S^+).
    $$
    As a result, if we take the differential of $e^{2\delta H_{y_1}}|_{\kappa_0(S^-)}$ in the $y_2, \eta_2$ variables, we have that 
    $$
    d \left( e^{2\delta H_{y_1}}|_{\kappa_0(S^-)} \right) = N_0.
    $$

    Finally, Equation \eqref{eqn: vector field near glancing point} defining $W(t)$ and Equation \eqref{eqn: jac field and differ of geodesic flow} show that the horizontal and vertical decomposition of $W(t)$ is given by 
    $$
    W(t) = (J(t),\dot J(t)),
    $$
    where $J(t)$ is a Jacobi field along $\pi \circ \varphi^t(z_0)$.
\end{proof}

\subsection{Curvature assumption.}
 
  Let $t_{\per} > 0$ and $x_0 \in F \cap \{ \pi^*b = 0 \} = S_{\partial \Sigma}\Sigma$ be such that $\varphi^{t_{\per}}(x_0) = x_0$. Suppose that $m \in \mathbb N$ and that there exist $t_0 = 0 < t_1 < \cdots < t_{m-1} < t_m = t_{\per}$ such that $\varphi^{t_j}(x_0)$ is a glancing point of $F$ and $\{ \pi^*b = 0 \}$ for any $0 \leq j < m$ and $\varphi^t(x_0)$ is not a glancing point for any $t \in (0,t_{\per}) \setminus \{t_1, \dots, t_{m-1} \}$. We then speficify the following assumption: 
\begin{equation}
    \label{eqn: curvature assumption}
    \begin{split}
        &\text{$(\widetilde \Sigma ,g)$ of dimension $n=2$ has:}\\
        &\quad \bullet \; \text{negative sectional curvature,} \quad \text{or} \\
        &\quad \bullet \; \text{non-positive sectional curvature and for every  $t_j$, $0 \leq j < m$, there exists} \\ 
        &\quad  \quad \; t \in (t_j,t_{j+1}) \quad \text{such that} \quad \varphi^t(y_0) \in F \cap \{\pi^*b = 0\} \cap \{ H_f \pi^*b >0 \}.
    \end{split}
\end{equation}

Before stating the main lemma of this subsection, we show first the following.
\begin{lemm}
    \label{lemma: vec tang to boundary}
    Suppose that $x_0 \in F \cap \{ \pi^*b = 0\} \cap \{ H_f \pi^*b  = 0\} = S \partial \Sigma$ and that the symplectomorphism $\kappa_0: U \subset T \widetilde \Sigma \rightarrow V \subset T^* \mathbb R^n$ and $y_n$ are given by Lemma \ref{lemma: relating billiard flow with flow from fried model}. Let $\partial_{y_n} \in T_0(T^* \mathbb R^n)$. Then $d\kappa_0^{-1}(\partial_{y_n}) \in T_{x_0} (S \partial \Sigma)$. 
\end{lemm}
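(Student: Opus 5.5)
We need to show that $d\kappa_0^{-1}(\partial_{y_n})$ is tangent to $S\partial\Sigma = F\cap G\cap\{H_f\pi^*b=0\}$ at $x_0$. The plan is to identify $S\partial\Sigma$ near $x_0$ inside the model, and then check that $\partial_{y_n}$ is tangent to that image at $0$. Since $\kappa_0$ is a diffeomorphism with $\kappa_0(x_0)=0$, this gives the claim.

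\textbf{Step 1: the image of the glancing set.} As in the proof of Lemma \ref{lemma: relating billiard flow with flow from fried model}, write $y_1\circ\kappa_0=\lambda f$ and $q\circ\kappa_0=\gamma\,\pi^*b$, where $q=\eta_1^2-y_1-\eta_n$ and $\lambda,\gamma>0$ on $U$. Then $\kappa_0(F\cap G\cap U)=\{y_1=0,\ q=0\}\cap V$. On $F\cap G$ we have
$$
H_{y_1}q\circ\kappa_0=H_{\lambda f}(\gamma\pi^*b)=\lambda\gamma\, H_f\pi^*b+\lambda\,\pi^*b\, H_f\gamma+\gamma f\,(\ldots).
$$
The last two terms vanish on $F\cap G$, so $H_{y_1}q\circ\kappa_0=\lambda\gamma\,H_f\pi^*b$ there. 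This is the same computation already used in that proof for $-2\eta_1$. Because $H_{y_1}=-\partial_{\eta_1}$, we have $H_{y_1}q=-2\eta_1$. Since $\lambda\gamma\neq0$, this gives
$$
\kappa_0(S\partial\Sigma\cap U)=\{y_1=0,\ \eta_1=0,\ \eta_n=0\}\cap V,
$$
using that $q=0$ together with $y_1=\eta_1=0$ forces $\eta_n=0$.

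\textbf{Step 2: tangency of $\partial_{y_n}$.} The set $\{y_1=\eta_1=\eta_n=0\}$ is a smooth submanifold. None of its defining functions depends on $y_n$, so the curve $s\mapsto(0,\dots,0,s,0,\dots,0)$ in the $y_n$ direction lies in it for small $s$. Hence $\partial_{y_n}\in T_0\kappa_0(S\partial\Sigma\cap U)$. Applying $d\kappa_0^{-1}$ gives $d\kappa_0^{-1}(\partial_{y_n})\in T_{x_0}(S\partial\Sigma)$.

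\textbf{Main obstacle.} The only delicate point is Step 1, namely justifying that the product-rule remainder terms vanish on $F\cap G$ and that $S\partial\Sigma$ is exactly cut out by $H_f\pi^*b=0$ within $F\cap G$. Both follow directly from the definitions and from the positivity of $\lambda$ and $\gamma$ established earlier.
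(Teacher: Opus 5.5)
Your proposal is correct and follows the paper's proof essentially verbatim. The paper also writes $y_1\circ\kappa_0=\lambda f$ and $q\circ\kappa_0=\gamma\,\pi^*b$, and uses $-2\eta_1\circ\kappa_0=\lambda\gamma\,H_f\pi^*b$ on $F\cap G$ to identify $\kappa_0(S\partial\Sigma\cap U)=\{y_1=\eta_1=\eta_n=0\}\cap V$; it then observes that this set contains the $y_n$-direction. Your explicit treatment of the product-rule terms, and your use of $\lambda\gamma\neq 0$ to get both inclusions, only fill in details the paper leaves implicit.
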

\begin{proof}
    From the definition of $\kappa_0$, we have that 
    $$
    \begin{aligned}
        & \kappa_0 (F \cap U) = \{ y_1 = 0\} \cap V \quad \text{and} \quad \kappa_0(\{ \pi^*b = 0 \} \cap U) = \{ q  = 0 \} \cap V,
    \end{aligned}
    $$
    where $q=\eta_1^2 - y_1 - \eta_n$. So we know that on $U$ 
    $$
    y_1 \circ \kappa_0 = \lambda f \quad \text{and} \quad q \circ \kappa_0 = \gamma \pi^*b,
    $$
    where $\lambda$ and $\gamma$ are smooth function that do not vanish on $U$. Pick $x \in F \cap \{ \pi^*b = 0 \} \cap \{ H_f \pi^*b = 0 \} \cap U$. Then, we see that
    $$
    -2 \eta_1(\kappa_0(x)) = H_{y_1}q  (\kappa_0(x)) = H_{y_1 \circ \kappa_0} (q \circ \kappa_0) (x) = \lambda(x) \gamma(x) H_f \pi^*b(x) = 0.
    $$
    We deduce that 
    $$
    \kappa_0(F \cap \{ \pi^*b = 0\} \cap \{ H_f \pi^*b = 0 \} \cap U) = \kappa_0(S\partial \Sigma \cap U) = \{y_1 = \eta_1 = \eta_n = 0 \} \cap V.
    $$
    Let $y = (y_1, y',y_n)$ and $\eta = (\eta_1, \eta', \eta_n)$. One can see that  if $(0,y',y_n,0,\eta',0) \in V$ then $(0,y',y_n,0,\eta',0) \in \kappa_0(S \partial \Sigma \cap U)$. As a result, we see that $d\kappa_0^{-1}(\partial_{y_n}) \in T_{x_0} (S \partial \Sigma)$.
\end{proof}

Under the assumptions in Equation \eqref{eqn: curvature assumption}, we show for example that a Poincar\'e map of the billiard flow $\varphi^t$ does not map a tangent vector of the glancing submanifold $S \partial \Sigma$ to a tangent vector of $S \partial \Sigma$. Precisely,
\begin{lemm}
\label{lemma: no tang vec mapped to tang vec}
    Let $N_0$ be given by Equation \eqref{eqn: matrices in expansion of det of Poinc return map minus ident} and the Poincar\'e maps $\widetilde \Phi_j$ by Equation \eqref{eqn: Poinc return map in sympl coord}. Under the assumptions in Equation \eqref{eqn: curvature assumption}, we have that 
\begin{equation}
    \label{eqn: tangent vector not mapped to tangent vector}
    \begin{split}
        &\det \left( d\widetilde \Phi_{m-1}(0)  N_0  \cdots  d\widetilde \Phi_0 (0) N_{0} - I \right) \neq 0 \quad \text{and} \\
        &\left \langle \left(d \widetilde \Phi_r(0)  N_0  d \widetilde \Phi_{r-1}(0) \cdots  N_0  d \widetilde \Phi_p(0) \right) \left(\partial_{y_2}\right) , \partial_{\eta_2}  \right \rangle \neq 0 \quad \text{for all} \quad p \leq r < m,
    \end{split}
\end{equation}
where $y_2$ and $\eta_2$ are defined by Lemma \ref{lemma: relating billiard flow with flow from fried model} and $\langle \cdot, \cdot \rangle$ is the standard metric on $\mathbb R^2$. 
\end{lemm}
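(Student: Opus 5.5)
Since the Poincar\'e sections are near glancing points where $N_0$ is the flow map $(y_2,\eta_2)\mapsto(y_2,\eta_2)$, the composition $d\widetilde \Phi_{m-1}(0) N_0 \cdots d\widetilde \Phi_0(0) N_0$ is just the differential at $\tilde z_0$ of the linearized \emph{geodesic-with-transversal-reflections} return map along the closed trajectory, treating the glancing points as if there were no reflection there. By Lemma \ref{lemma: Jac field on bill segment with one glanc pt} near each glancing point and Lemma \ref{lemma: billiard jacobi field with transversal reflections} between glancing points, every tangent vector $w$ is transported by a piecewise Jacobi field $W(t)=(J(t),\dot J(t))$, with jumps only at the transversal reflections, given by $dR$ from \eqref{eqn: differ of R in horiz and vert decomp}. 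In dimension two, after quotienting by $H_f$, we may restrict to the normal component: $J(t)=j(t)e(t)$ with $e$ the unit normal to $\dot\gamma$, and the Jacobi equation becomes $\ddot j + K j = 0$ with $K\le 0$. At a transversal reflection, the standard computation from \eqref{eqn: differ of R in horiz and vert decomp} with the concavity \eqref{eqn: concave boundary} gives $j(t_j+0)=\pm j(t_j-0)$ and $\dot j(t_j+0)=\pm\bigl(\dot j(t_j-0)+ \kappa_j j(t_j-0)\bigr)$ with $\kappa_j>0$ (a dispersing kick, up to orientation conventions of $e$). Squaring away the sign, the key monotone quantity is $Q(t)=j(t)\dot j(t)$.

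The main step is a cone-invariance argument. Let $\mathcal C=\{(j,\dot j): j\dot j\ge 0\}$. Along the geodesic flow, $\frac{d}{dt}(j\dot j)=\dot j^2 - K j^2\ge 0$, and at reflections $j\dot j$ increases by $\kappa_j j^2\ge 0$. Thus the cone is forward invariant. Moreover, strict increase holds whenever $K<0$ somewhere, or $\dot j\ne 0$, or a strict reflection with $j\ne0$ occurs. Hence any nonzero $(j,\dot j)$ leaves the boundary of the cone strictly into its interior over a full period (or over any segment $[t_p,t_{r+1}]$). In the negative-curvature case this is immediate. In the nonpositive case it uses the second bullet of \eqref{eqn: curvature assumption}: a strict reflection in each $(t_j,t_{j+1})$, combined with the observation that if $j$ vanished at the reflection then $\dot j\neq0$ nearby already forces growth.

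I then translate to the coordinates $(y_2,\eta_2)$. Via Lemma \ref{lemma: vec tang to boundary}, $\partial_{y_2}$ corresponds to a vector tangent to $S\partial\Sigma$, i.e.\ a Jacobi datum on the boundary of the relevant cone (tangent to the boundary at the glancing point). I will identify the line $\partial_{\eta_2}^\perp$, i.e.\ the $\partial_{y_2}$ direction, at each glancing section with the $J$-data of perturbations staying in $S\partial\Sigma$ to first order. The second claim of \eqref{eqn: tangent vector not mapped to tangent vector} then says that this boundary line is not mapped to itself, which follows from the strict cone entry above. For the determinant, in dimension two the maps are symplectic, so $\det(A-I)=2-\tr A$, and $A$ is hyperbolic (trace $>2$ in absolute value) because it strictly maps a cone into its interior. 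Thus $\det(A-I)\ne0$. One must check the sign: with a $-1$ orientation flip one gets $\tr A<-2$, and again $\det(A-I)\neq 0$.

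The obstacle I expect is matching the abstract Melrose coordinates $\kappa_j$ with the Jacobi field picture, in particular making sure that the cone $\mathcal C$ at each glancing section, expressed in $(y_2,\eta_2)$, has the line spanned by $\partial_{y_2}$ on its boundary and is consistently oriented across the different $\kappa_j$. I would handle this by computing $d\kappa_j^{-1}$ only on the two-dimensional quotient modulo $H_f$ and $\partial_{f}$-directions, using that $\partial_{y_2}$ is tangent to $S\partial\Sigma$ while $\partial_{\eta_2}$ is transverse to it (since $q=\eta_1^2-y_1-\eta_2$ changes).
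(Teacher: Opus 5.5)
Your proposal is correct and uses the same mechanism as the paper. The key quantity is $\langle J,\dot J\rangle = j\dot j$. It is nondecreasing along the piecewise Jacobi field: its derivative is $\dot j^2-Kj^2\ge 0$, and it jumps up at the dispersing transversal reflections. It vanishes at the glancing datum $d\kappa_p^{-1}(\partial_{y_2})$, since in dimension two the horizontal part of that datum is parallel to $\dot\gamma$. Under the curvature assumption it increases strictly. The paper expresses exactly this by integrating the Jacobi identity over $[t_i,t_f]$.

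The one place you deviate is the determinant. You go through cone invariance and hyperbolicity; that is valid, and $|\tr A|>2$ already gives $\tr A\neq 2$, so your sign check is unnecessary. The paper's route is shorter: a vector fixed by $A$ would give $\langle J,\dot J\rangle(t_m)=\langle J,\dot J\rangle(t_0)$, which contradicts strict monotonicity directly.
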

\begin{proof}
    We break down the proof in several steps.
    \bigskip
    \newline
    \textbf{Step 1:} Construction of a Jacobi field.
    
    Given any $0 \leq j < m$, recall that Equation \eqref{eqn: Poinc return map in sympl coord} says that $\widetilde \Phi_j = \kappa_{j+1} \circ \Phi_j \circ \kappa_j^{-1}$, where the Poincar\'e maps $\Phi_j = \varphi^{T_j(\bullet)}(\bullet): S_j^+ \rightarrow S_{j+1}^-$ of the form  
    \begin{equation*}
        \Phi_j(\bullet) = e^{T_{j,l_j}(\bullet)H_f} (\bullet) \circ R \circ \cdots \circ R \circ e^{T_{j,0}(\bullet) H_f} (\bullet)
    \end{equation*}
    are defined by Lemma \ref{lemma: Poinc return map for closed traj with m glancing pts} and the symplectomorphisms $\kappa_j$ by Lemma \ref{lemma: relating billiard flow with flow from fried model} (Here we set $\kappa_m = \kappa_0$ and $S^{\pm}_m = S^{\pm}_0$). 
    
    For each $0 \leq j < m$, notice also that $x_j^+ = \varphi^{\beta_j(\delta_j,\varphi^{t_j}(x_0))} (\varphi^{t_j}(x_0)) \in S^+_j$, where $\delta_j$ is defined by Equation \eqref{eqn: Poinc secs near glanc point} and $\beta_j$ by Lemma \ref{lemma: relating billiard flow with flow from fried model}. If we define $x_{j+1}^- = \Phi_j(x^+_j) \in S_{j+1}^-$, then we have that $\varphi^{t_{j+1}}(x_0) = \varphi^{\beta_{j+1}(\delta_{j+1},x_{j+1}^-)}(x_{j+1}^-)$. As a result, we can see that 
    $$
    T_j(x_j^+) = t_{j+1} - t_j - (\tilde \delta_j^+ + \tilde \delta_{j+1}^-),
    $$
    where $\tilde \delta_j^+ = \beta_j(\delta_j,\varphi^{t_j}(x_0))$ and $\tilde \delta_{j+1}^- = \beta_{j+1}(\delta_{j+1},x_{j+1}^-)$. Set $x_0^{\pm} = x_m^{\pm}$.
  
    \begin{figure}[ht]
        \centering
        \includegraphics[width=0.95\linewidth]{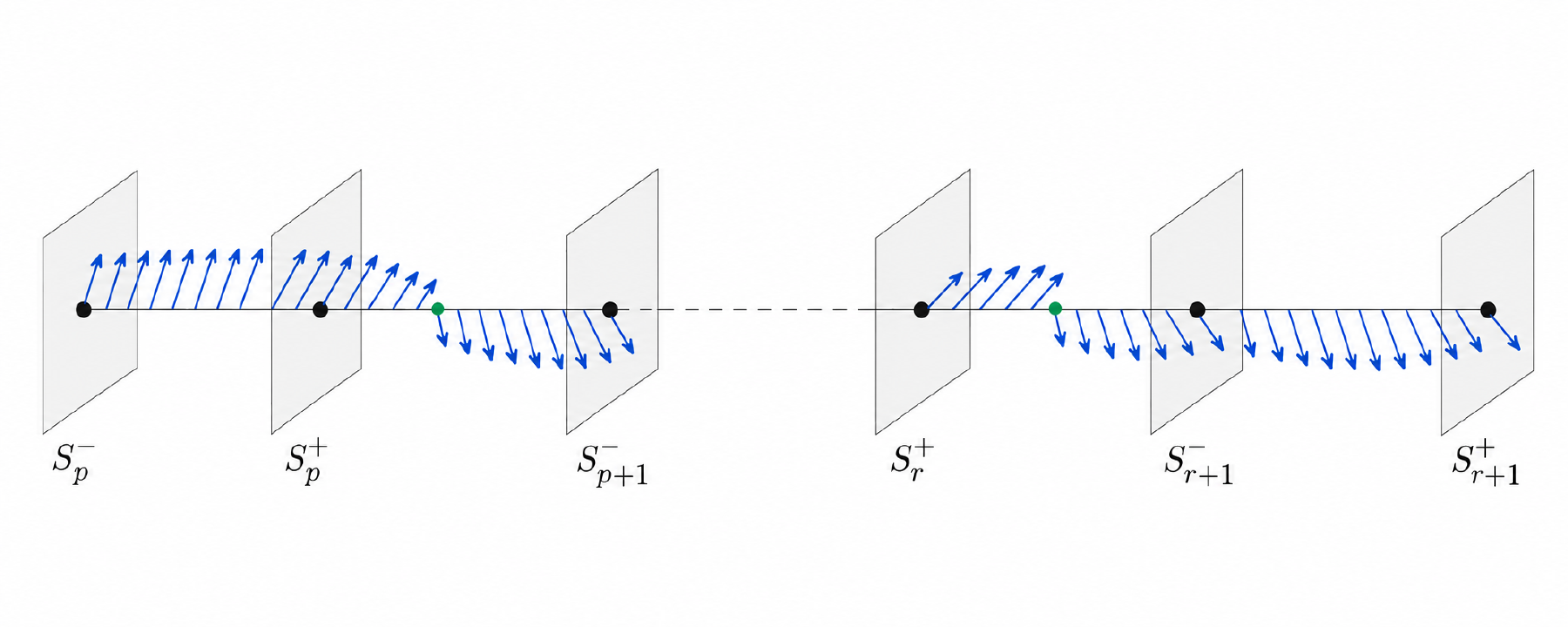}
        \caption{Jacobi field $W(t)$ defined by Equation \eqref{eqn: Jacobi field on segment of closed bill traj with transversal and glanc points} along a segment of a billiard trajectory between two Poincar\'e sections $S_p^-$ and $S_{r+1}^+$. $W(t)$ is not continuous at the black and green dots. The green dots represent points where the billiard flow undergoes transversal reflections.}
        \label{fig:placeholder}
    \end{figure}
    Suppose that $p \leq r < m$, and let $w \in T_{x_p^-}S^-_p$. We use Lemma \ref{lemma: Jac field on bill segment with one glanc pt} to construct a vector field $W_{p,0}(t) \in T_{\varphi^t(x_p^-)}F$ defined for all $t \in [0,\beta_p(2 \delta_j,x_p^-)]$ and satisfying properties (1)--(3) in Lemma \ref{lemma: Jac field on bill segment with one glanc pt}. Notice also that $\beta_p(2\delta_p,x_p^-) = \tilde \delta_p^- + \tilde \delta_p^+$, so the vector field $W_{p,0}(t)$ is defined for all $t \in [0,\tilde \delta_p^- + \tilde \delta_p^+]$. Additionally, property $(1)$ in Lemma \ref{lemma: Jac field on bill segment with one glanc pt} says that there exists $c_{p,0} \in \mathbb R$ such that $W_{p,0}(\tilde \delta_p^- + \tilde \delta_p^+) - c_{p,0} H_f(x_p^+) \in T_{x_p^+} S^+_p$.  Now, assume that for any $p \leq j < r$ we have the tangent vector $W_{j,0}(\tilde \delta_{j}^- + \tilde \delta_{j}^+) - c_{j,0}H_f(x_j^+) \in T_{x_{j}^+}S_{j}^+$. Given this initial tangent vector $W_{j,0}(\tilde \delta_{j}^- + \tilde \delta_{j}^+) - c_{j,0}H_f(x_j^+)$, we use Lemma \ref{lemma: billiard jacobi field with transversal reflections} to get a vector field $W_j(t) \in T_{\varphi^t(x_j^+)}F$ defined for all $t \in [0,T_j(x_j^+)]$ and that satisfies properties (1)--(5) in Lemma \ref{lemma: billiard jacobi field with transversal reflections}. Using property (1) in Lemma \ref{lemma: billiard jacobi field with transversal reflections}, we see that $W_j(T_j(x_j^+)) - c_j H_f(x_{j+1}^-) \in T_{x_{j+1}^-}S_{j+1}^-$, for some $c_j \in \mathbb R$. So we use Lemma \ref{lemma: Jac field on bill segment with one glanc pt} again to construct a vector field $W_{j+1,0}(t) \in T_{\varphi^t(x_j^-)}F$ defined for all $t \in [0,\tilde \delta_{j+1}^- + \tilde \delta_{j+1}^+]$ ($\tilde \delta^{\pm}_m = \tilde \delta^{\pm}_0$) and satisfying properties (1)--(3) in Lemma \ref{lemma: Jac field on bill segment with one glanc pt}.

    Now, we define the vector field $W(t) \in T_{\varphi^t(x_0)}F$ for all $t \in (t_p - \tilde \delta^-_p, t_{r+1} + \tilde \delta^+_{r+1})$ by 
    \begin{equation}
        \label{eqn: Jacobi field on segment of closed bill traj with transversal and glanc points}
        W(t) = \begin{cases}
            W_{j,0}(t-t_j + \tilde \delta_j^-) & \text{if} \quad p \leq j \leq r+1, \quad t_j - \tilde \delta_j^- < t < t_j + \tilde \delta_j^+\\
            W_j(t-t_j - \tilde \delta_j^+) & \text{if} \quad p \leq j \leq r, \quad t_j + \tilde \delta_j^+ \leq t \leq t_{j+1} - \tilde \delta_{j+1}^-.
        \end{cases}
    \end{equation}
    From the definition of $W(t)$, one can check that
    \begin{equation}
    \label{eqn: left and right limits at end points of N_0 match}
    \begin{split}
        &W(t_j - \tilde \delta_j^- -0) - c_{j-1} H_f(x_j^-) = W(t_j - \tilde \delta_j^- +0 ) \in T_{x_j^-}S_j^- \quad \text{for all} \quad p < j \leq r+1 \\
        &\text{and}  \quad W(t_j + \tilde \delta_j^+ -0) -c_{j,0} H_f(x_j^+) = W(t_j + \tilde \delta_j^+ +0) \in T_{x_j^+}S_j^+ \quad \text{for all} \quad p \leq j \leq r.
    \end{split} 
    \end{equation}

    Next we proceed with cases.

    \textbf{Case 1:} Suppose $\left \langle \left(d \widetilde \Phi_r(0)  N_0  d \widetilde \Phi_{r-1}(0) \cdots  N_0  d \widetilde \Phi_p(0) \right) \left(\partial_{y_2}\right) , \partial_{\eta_2}  \right \rangle = 0$ for some $0 \leq p \leq r < m$.
    \noindent
    \newline
    In this case, we set the initial tangent vector $w \in T_{x_p^-}S^-$ used in the definition of $W(t)$ to be $w = d\kappa_p^{-1}(\partial_{y_2})$. The definition of $W(t)$ provided in Equation \eqref{eqn: Jacobi field on segment of closed bill traj with transversal and glanc points}, properties (1)--(2) in Lemma \ref{lemma: Jac field on bill segment with one glanc pt}, and property $(1)$ in Lemma \ref{lemma: billiard jacobi field with transversal reflections} imply that there exist constants $a_p,a_{r+1} \in \mathbb R$ such that
    \begin{equation}
        \label{eqn: W(t) at bound points when we assume glan mapped to glanc}
        \begin{split}
            &W(t_p) -a_p H_f(\varphi^{t_p}(x_0)) = d\kappa_p^{-1}(\partial_{y_2}) \in T_{\varphi^{t_p}(x_0)}F \quad \text{and} \\
            &W(t_{r+1}) - a_{r+1} H_f(\varphi^{t_{r+1}}(x_0)) = a\, d\kappa_{r+1}^{-1}(\partial_{y_2}) \in T_{\varphi^{t_{r+1}}(x_0)}F
        \end{split}
    \end{equation}
    for some $a \in \mathbb R$ because $\left(d \widetilde \Phi_r(0)  N_0  d \widetilde \Phi_{r-1}(0) \cdots  N_0  d \widetilde \Phi_p(0) \right) \left(\partial_{y_2}\right) \in \text{span} \{ \partial_{y_2} \}$ by assumption.

    \textbf{Case 2:} Assume that $\det \left( d\widetilde \Phi_{m-1}(0)  N_0  \cdots  d\widetilde \Phi_0 (0) N_{0} - I \right) = 0$.
    \noindent
    \newline
    Set $p = 0$ and $r=m-1$ and choose $w \in T_{x_0^-}S_0^-\setminus \{ 0 \}$ such that 
    $$
    d\widetilde \Phi_{m-1}(0)  N_0  \cdots  d\widetilde \Phi_0 (0) N_{0} (d\kappa_0(w)) = d \kappa_0(w).
    $$ 
    Using this $w$ in the definition of the vector field $W(t)$ in Equation \eqref{eqn: Jacobi field on segment of closed bill traj with transversal and glanc points}, we see that properties $(1)-(2)$ in Lemma \ref{lemma: Jac field on bill segment with one glanc pt} and property $(1)$ in Lemma \ref{lemma: billiard jacobi field with transversal reflections} imply that there exists $a \in \mathbb R$ such that
    \begin{equation}
        \label{eqn: W(t) at bound points when when dP-I not invert}
        \begin{split}
            &W(-\tilde \delta_0^- +0) = W(t_m - \tilde \delta_0^- -0) - a\, H_f(x_m^-) = w, \quad \text{which implies} \\
            &W(t_0) = W(t_m) - \tilde a H_f(x_0) \quad \text{for some} \quad \tilde a \in \mathbb R.
        \end{split}
    \end{equation}
    \bigskip
    \newline
    \textbf{Step 2: } Applying Jacobi Equation.

    For each $0 \leq j < m$, define the set $B_j = \{ t_{j,0}, \dots, t_{j,l_j} \}$, $t_j < t_{j,0} < \dots < t_{j,l_j} < t_{j+1}$, to be such that $\varphi^{t_{j,k}}(x_0) \in F \cap \{ \pi^*b = 0 \} \cap \{ H_f \pi^*b \neq 0\}$ for all $t_{j,k} \in B_j$ and $\varphi^t(x_0) \in F \cap \{ \pi^*b >  0 \}$ for all $t \in (t_j,t_{j+1}) \setminus B_j$. This set $B_j$ specifies transversal reflections of the billiard flow $\varphi^t$ that happen between times $t_j$ and $t_{j+1}$.   

    Consider the Jacobi field $W(t)$ from case 1 (or case 2) in Step 1 defined for all $t \in [t_i,t_f] = [t_p, t_{r+1}]$ (or $t \in [t_i, t_f] = [t_0, t_m ]$). The horizontal and vertical decomposition of $W(t)$  is given by 
    \begin{equation}
    \label{eqn: W explicitly as jac field}
        W(t) = (J(t),\dot J(t)),
    \end{equation}
    where $J(t)$ is a Jacobi field along $\gamma(t) = \left(\pi \circ \varphi^t\right)(x_0)$. So, Equation \eqref{eqn: jacob field equation} implies 
    
    \begin{equation}
    \label{eqn: integrated Jac equation with end points not handled}
        \begin{split}
            &\int_{t_i}^{t_f} \left(- \left( \| J \|^2_g - \langle J, \dot \gamma \rangle_g^2 \right) K(J, \dot \gamma) + \| \dot J \|_g^2 \right) dt = \langle J, \dot J  \rangle_g(t_f) - \left \langle J, \dot J \right \rangle_g(t_i) \\
            & \quad \quad \quad  + \sum_{t_i \leq t_j < t_f} \left( \sum_{t_{j,k} \in B_j } \left\langle \dot J, J \right\rangle_g \left(t_{j,k}-0\right) - \left\langle \dot J, J \right\rangle_g \left(t_{j,k}+0\right) \right) \\
            & \quad \quad \quad + \sum_{t_i < t_j \leq t_f} \left\langle \dot J, J \right\rangle_g \left(t_j - \tilde \delta_j^- -0\right) - \left\langle \dot J, J \right\rangle_g \left(t_j - \tilde \delta_j^- +0\right) \\
            & \quad \quad \quad + \sum_{t_i \leq t_j < t_f} \left\langle \dot J, J \right\rangle_g \left(t_j + \tilde \delta_j^+ -0\right) - \left\langle \dot J, J \right\rangle_g \left(t_j + \tilde \delta_j^+ +0\right).
        \end{split}
    \end{equation}
    
    Now suppose that $x = (p,v) \in F = S\widetilde \Sigma$ and $w_1,w_2 \in T_{x}F$ are such that $w_1 - w_2 = cH_f(x)$, for some $c \in \mathbb R$. Recall that if $\sigma_i(s) = (p_i(s),v_i(s))$, $i=1,2$, is a curve in $F$ such that $\sigma_i(0) = x$ and $\dot \sigma_i(0) = w_i$, then by Equation \eqref{eqn: hor and vert decomp} the horizontal and vertical decomposition of $w_i$ is given by 
    $$
    w_i = (\dot p_i(0), \nabla_{\dot p_i}v_i (0)).
    $$
    Because $w_1-w_2 = c H_f(x)$, we have that $(\dot p_1(0), \nabla_{\dot p_1}v_1 (0)) - (\dot p_2(0), \nabla_{\dot p_2}v_2 (0)) = c(v,0)$,  where $(v,0)$ is the horizontal and vertical decomposition of the generator of the geodesic flow $H_f(x)$. As a result, we get that 
    \begin{equation}
        \label{eqn: diff of inner prod of vert and hor components for two vect field differing by gen of geod flow}
        \langle \dot p_1 (0), \nabla_{\dot p_1} v_1 (0) \rangle_g - \langle \dot p_2 (0), \nabla_{\dot p_2} v_2 (0) \rangle_g = c \langle v, \nabla_{\dot p_1} v_1 (0) \rangle_g = 0,
    \end{equation}
    where the last equality follows from the observation that $\langle v_1(s), v_1(s) \rangle_g = 1 $, which implies that $2 \langle v, \nabla_{\dot p_1} v_1 (0) \rangle_g = \partial_s|_{s=0}\langle v_1(s), v_1(s) \rangle_g = 0$.

    Set $n = \frac{\nabla b}{\| \nabla b\|_g}$. Then for each $t_{j,k} \in B_j$, the definition of $W(t)$ in Equation \eqref{eqn: Jacobi field on segment of closed bill traj with transversal and glanc points}, Properties $(3)$ and $(4)$ in Lemma \ref{lemma: billiard jacobi field with transversal reflections}, Equation \eqref{eqn: differ of R in horiz and vert decomp}, and Equation \eqref{eqn: diff of inner prod of vert and hor components for two vect field differing by gen of geod flow} imply that 
    \begin{equation}
    \label{eqn: sign from transversal reflections}
        \begin{split}
            &\left\langle \dot J, J \right\rangle_g \left(t_{j,k}-0\right) - \left\langle \dot J, J \right\rangle_g \left(t_{j,k}+0\right) \\
            & \quad \quad \quad = 2 \left(H_f \pi^*b\right)(\varphi^{t_{j,k}-0}(x_0)) \left \langle \nabla^2 b, \frac{1}{\| \nabla b\|^2} J(t_{j,k}+0) \otimes J (t_{j,k}+0)\right \rangle_g \\
            & \quad \quad \leq -c_{j,k} \left\| J\left(t_{j,k} +0\right) \right\|_g^2  \quad (c_{j,k}>0),
        \end{split}
    \end{equation}
    where in the first equality we used the fact that $J(t_{j,k}+0) \in T \partial \Sigma$, which implies that $\left \langle J(t_{j,k}+0), \nabla b \right \rangle = 0$. The last inequality follows from the assumption that the boundary $\partial \Sigma$ is concave as made precise in Equation \eqref{eqn: concave boundary} and from the fact that $H_f \pi^*b\left( \varphi^{t_{j,k}-0}(x_0)\right) < 0$.
    
    In the case when $W(t)$ is from Case 1 in Step 1, Lemma \ref{lemma: vec tang to boundary} and Equation \eqref{eqn: W(t) at bound points when we assume glan mapped to glanc} imply that there exists $c_i,c_f \in \mathbb R$ such that 
    $$
    W(t_i) - c_i H_f(\varphi^{t_i}(x_0)) \in T_{\varphi^{t_i}(x_0)} (S \partial \Sigma) \quad \text{and} \quad W(t_f) - c_f H_f(\varphi^{t_f}(x_0))  \in T_{\varphi^{t_f}(x_0)} (S \partial \Sigma),
    $$
    so Equation \eqref{eqn: diff of inner prod of vert and hor components for two vect field differing by gen of geod flow} give $\left \langle J, \dot J \right \rangle_g(t_f) = \left \langle J, \dot J \right \rangle_g(t_i) = 0 $ since $\Sigma$ has dimension $2$ according to the assumptions made in Equation \eqref{eqn: curvature assumption}.
    
    When $W(t)$ is from Case 2 in Step 1, then Equation \eqref{eqn: W(t) at bound points when when dP-I not invert} and Equation \eqref{eqn: diff of inner prod of vert and hor components for two vect field differing by gen of geod flow} give that $\left \langle J, \dot J \right \rangle_g(t_f) - \left \langle J, \dot J \right \rangle_g(t_i) = 0 $. 

    Furthermore, Equation \eqref{eqn: left and right limits at end points of N_0 match} and Equation \eqref{eqn: diff of inner prod of vert and hor components for two vect field differing by gen of geod flow} gives that 
    $$
    \begin{aligned}
        &\left\langle \dot J, J \right\rangle_g \left(t_j - \tilde \delta_j^- -0\right) - \left\langle \dot J, J \right\rangle_g \left(t_j - \tilde \delta_j^- +0\right) = 0 \quad \text{for all} \quad t_i < t_j \leq t_f \quad \text{and} \\
        & \left\langle \dot J, J \right\rangle_g \left(t_j + \tilde \delta_j^+ -0\right) - \left\langle \dot J, J \right\rangle_g \left(t_j + \tilde \delta_j^+ +0\right) = 0 \quad \text{for all} \quad t_i \leq t_j < t_f.
    \end{aligned}
    $$
    
    We can then use the inequality in Equation \eqref{eqn: sign from transversal reflections} to simplify Equation \eqref{eqn: integrated Jac equation with end points not handled} as follows: 
    \begin{equation*}
        \begin{split}
            \int_{t_i}^{t_f} &\left(- \left( \| J \|^2_g - \left\langle J, \dot \gamma \right\rangle_g^2 \right) K(J, \dot \gamma) + \| \dot J \|_g^2 \right) dt \leq \sum_{t_i \leq t_j < t_f} \left( \sum_{t_{j,k} \in B_j } -c_{j,k} \left\| J\left(t_{j,k} +0\right) \right\|_g^2   \right)  ,
        \end{split}
    \end{equation*}
    where $c_{j,k}>0$. The assumptions made in Equation \eqref{eqn: curvature assumption} imply that $J=0$ and $\dot J = 0$ for all $t \in [t_i,t_f]$. In fact, if the sectional curvature $K$ is negative, then we must have that $\left(\| J(t) \|^2_g - \left\langle J(t), \dot \gamma(t) \right\rangle_g^2\right)K(J(t),\dot \gamma(t)) = 0$ for all $t \in [t_i, t_f]$, which happens when $J = 0$ or when $J = c \dot \gamma $ for some constant $c \neq 0$. But we cannot have $J = c \dot  \gamma$ because otherwise $W(t_i) = (J(t_i), \dot J(t_i))$ would be in the span of $H_f(\varphi^{t_i}(x_0))$, which would contradict Equation \eqref{eqn: W(t) at bound points when we assume glan mapped to glanc} and Equation \eqref{eqn: W(t) at bound points when when dP-I not invert}. On the other hand, if the sectional curvature $K = 0$, then for each $j$ such that $t_i \leq t_j < t_f$ (in fact notice there exists at least one such $j$) the set $B_j$ is non-empty; so we deduce that $J(t_{j,k}) = 0$ for all $t_{j,k} \in B_j$. As a result, we also obtain that $J(t) = 0$ for all $t \in [t_i, t_f]$.
\end{proof}

As an immediate corollary of Lemma \ref{lemma: no tang vec mapped to tang vec} above, we get the following.
\begin{corr}
    \label{cor: lead term in trac of dP non-zero}
    Let $\textbf i=(i_0, \dots, i_{m-1}) \in \{ 0, 1 \}^m$ and the map $B_{\textbf i}$ be given by Equation \eqref{eqn: matrices in expansion of det of Poinc return map minus ident}. If the assumptions in Equation \eqref{eqn: curvature assumption} hold and $i_j = 1$ for some $0 \leq j < m$, then
    \begin{equation*}
        \tr B_{\textbf i}(0) \neq 0.
    \end{equation*}
\end{corr}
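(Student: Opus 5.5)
The idea is to exploit that $N_1$ has rank one: the trace of $B_{\textbf i}(0)$ then factors into a product of scalar matrix entries. Each of these entries is shown to be nonzero by the second assertion of Lemma \ref{lemma: no tang vec mapped to tang vec}.

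\textbf{Step 1: the structure of $B_{\textbf i}(0)$.} At $z=0$ we have $\widetilde Q_j(0)=0$ and $\widetilde R(\widetilde Q_j(0))=0$ in each chart $\kappa_j$. Hence $B_{\textbf i}(0) = d\widetilde\Phi_{m-1}(0) N_{i_{m-1}}\cdots d\widetilde\Phi_0(0) N_{i_0}$. In dimension $n=2$ we use the coordinates $(y_2,\eta_2)$. Since $h(y_2,\eta_2)=(\eta_2,0)$, we get
$$N_1 v=\langle v,\partial_{\eta_2}\rangle\,\partial_{y_2},$$
that is, $N_1=\partial_{y_2}\otimes \partial_{\eta_2}^{*}$. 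Also $N_0=I$.

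\textbf{Step 2: reduce to $i_0=1$.} The labelling of the glancing points $\varphi^{t_j}(x_0)$ along $\gamma$ is arbitrary up to cyclic shift, and the assumption \eqref{eqn: curvature assumption} is invariant under such shifts. We may therefore relabel so that $i_0=1$. Equivalently, one can use cyclic invariance of the trace and note that Lemma \ref{lemma: no tang vec mapped to tang vec} applies equally with any glancing point chosen as the starting one. This relabelling is the only delicate point. The lemma is stated only for index ranges $p\le r<m$, so we must make sure no product of $d\widetilde\Phi$'s wraps around past index $m-1$. Choosing $i_0=1$ guarantees this.

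\textbf{Step 3: compute the trace.} Let $0=j_1<j_2<\dots<j_k$ be the indices with $i_j=1$, and set $j_{k+1}=m$. For $1\le s\le k$ define
$$M_s=d\widetilde\Phi_{j_{s+1}-1}(0)\,N_0\cdots N_0\,d\widetilde\Phi_{j_s}(0).$$
Then $B_{\textbf i}(0)=M_k N_1 M_{k-1}N_1\cdots M_1 N_1$. Substituting $N_1=\partial_{y_2}\otimes\partial_{\eta_2}^*$, each block $\partial_{\eta_2}^*M_s\partial_{y_2}$ is a scalar. The trace telescopes to
$$\tr B_{\textbf i}(0)=\prod_{s=1}^{k}\big\langle M_s(\partial_{y_2}),\partial_{\eta_2}\big\rangle .$$
Each $M_s$ is exactly a product of the form $d\widetilde\Phi_r(0)N_0\cdots N_0 d\widetilde\Phi_p(0)$ with $p=j_s\le r=j_{s+1}-1<m$. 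By the second assertion of Lemma \ref{lemma: no tang vec mapped to tang vec}, every factor in the product is nonzero, so $\tr B_{\textbf i}(0)\neq0$.
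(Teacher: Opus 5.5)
Your proof is correct and follows essentially the same route as the paper. Both arguments reduce to $i_0=1$ via cyclic invariance of the trace, use that $N_1=\partial_{y_2}\otimes\partial_{\eta_2}^*$ has rank one to factor $\tr B_{\textbf i}(0)$ into the scalars $\langle M_s(\partial_{y_2}),\partial_{\eta_2}\rangle$, and conclude with the second assertion of Lemma~\ref{lemma: no tang vec mapped to tang vec}. You also make the reduction step more careful than the paper's one-line cyclic-trace WLOG: you relabel the glancing points so that no block wraps past index $m-1$, a case the lemma's range $p\le r<m$ does not literally cover.
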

\begin{proof}
    From the definition of $B_{\textbf i}$, see that $B_{\textbf i}(0) = d \widetilde \Phi_{m-1}(0) N_{i_{m-1}} \cdots d \widetilde \Phi_0(0) N_{i_0}$. Without loss of generality, we can assume that $i_0 = 1$, since otherwise we have 
    $$
    \tr B_{\textbf i}(0) = \tr\left( d\widetilde \Phi_{j-1}(0) N_{i_{j-1}}  \cdots  d\Phi_0 (0) N_{i_0}  d \widetilde \Phi_{m-1} (0) N_{i_{m-1}}  \cdots d \widetilde \Phi_j (0) N_{i_j} \right),
    $$ 
    where $j$ is such that $i_j =1$ and $i_l = 0$ for all $0 \leq l < j$. In fact, notice that such $j$ exists because we assumed that $i_l = 1$ for some $0 \leq l < m$.

    Using the definition of $N_{i_l}$ provided in Equation \eqref{eqn: matrices in expansion of det of Poinc return map minus ident}, we see that $N_1 = \begin{pmatrix}
        0 & 1 \\
        0 & 0
    \end{pmatrix}$ since we assume that $n=2$. With $i_0 = 1$, we immediately get that 
    \begin{equation*}
        \begin{split}
            \tr B_{\textbf i}(0)= \left \langle \left( d \widetilde \Phi_{m-1}  N_{i_{m-1}} (0)\cdots  d\widetilde \Phi_1(0)N_{i_1}d \widetilde \Phi_0(0) \right)  \left(\partial_{y_2}\right)  , \partial_{\eta_2}\right \rangle,
        \end{split}
    \end{equation*}
    where $y_2$ and $\eta_2$ are defined as in Equation \eqref{eqn: matrices in expansion of det of Poinc return map minus ident}. Let $A_{\textbf i} = \{ j_0 = 0, j_1, \dots, j_r \}$ be such that $0 = j_0 < j_1 < \dots < j_r < m$, $i_{j_0} = i_{j_1} = \dots = i_{j_r} = 1$, and $i_j = 0$ for all $j \in \{ 0, 1, \dots, m-1 \} \setminus A_{\textbf i}$. It follows that 
    
    \begin{equation*}
        \begin{split}
             &\tr B_{\textbf i}(0) = \prod_{p=0}^r \left \langle \left( d \widetilde \Phi_{j_{p+1}-1} (0) N_0 \cdots d \widetilde \Phi_{j_p+1}(0) N_0 d \widetilde \Phi_{j_p}(0) \right)  \left(\partial_{y_2}\right)  , \partial_{\eta_2}\right \rangle, 
        \end{split}
    \end{equation*}
    where we set $j_{r+1} = m$. Then, Lemma \ref{lemma: no tang vec mapped to tang vec} gives that for each $j_p \in A_{\textbf i}$, we have
    $$
    \left \langle \left( d \widetilde \Phi_{j_{p+1}-1} (0) N_0 \cdots  d \widetilde \Phi_{j_p}(0) \right) \left(\partial_{y_2}\right)  , \partial_{\eta_2}\right \rangle \neq 0,
    $$
    which finishes the proof.
\end{proof}

Let $P: U \rightarrow S_0^-$ be the Poincar\'e return map defined by Lemma \ref{lemma: Poinc return map for closed traj with m glancing pts} and $\widetilde Q_j$ the maps given by Equation \eqref{eqn: Poincare map up to the jth Poincare sec before glancing point}. For each $\textbf k = (k_0,\dots, k_{m-1}) \in \{0,1 \}^m$, we define the subset $U_{\textbf k} \subset U$ by 
\begin{equation}
    \label{eqn: partition of U based on how many boundary hitting points}
    U_{\textbf k} = \left\{ x \in U: (-1)^{k_j + 1} \eta_n( \widetilde Q_j(\kappa_0(x))) > 0 \quad \text{for all} \quad 0 \leq j < m \right\}.
\end{equation}
Each of these sets $U_{\textbf k}$ contains points in $U$ such that the billiard flow through those points intersects a subset of the boundary $S_{\partial \Sigma} \Sigma$ close to the glancing set $S \partial \Sigma$ near the same glancing points. For example, $U_{(1, 0 , \dots, 0)}$ contains those points in $U$ where the billiard flow through them intersects the boundary near the glancing point $x_0$ and possibly at other points which are away from any of the other glancing points $x_1, \dots, x_{m-1}$. Points in the set $U_{(0,\dots,0)}$ are such that the billiard flow through them does not interest the boundary near any of the glancing points $x_0, \dots, x_{m-1}$. Notice that 
%\SD{maybe mention that here we use that the maps in question preserve measure 0 sets (note this is not automatic for homeomorphisms); in fact they preserve the Lebesgue measure even}
\begin{equation}
    \label{eqn: the sets U_k form a partition of U up to a set of meas 0}
    \begin{split}
        & x \in U \setminus (\bigcup_{\textbf k \in \{0,1 \}^m} U_{\textbf k}) \iff \eta_n(\widetilde Q_j(\kappa_0(x))) = 0 \quad \text{for some} \quad 0 \leq j < m,  \\
        &\text{so the set} \quad U \setminus (\bigcup_{\textbf k \in \{0,1 \}^m} U_{\textbf k}) \quad \text{has measure zero.} 
    \end{split}
\end{equation}

For each $\textbf k \in (k_0, \dots , k_{m-1}) \in \{0 , 1 \}^m$, define
\begin{equation}
\label{eqn: indices that record bound hit time near glancing same on subset mak partition}
    \begin{split}
        &\mathcal{I}(U_{\textbf k}) = \left\{ \textbf i = (i_0, \dots, i_{m-1}) \in \{0, 1\}^m: 0 \leq i_l \leq k_l \quad \text{for all} \quad 0 \leq l < m \right \} \; \text{and see that}  \\   
        &\mathcal I(z) = \mathcal I(U_{\textbf k}) \quad \text{for all} \quad z \in \kappa_0(U_{\textbf k}), 
    \end{split}
\end{equation}
where $\mathcal I(z)$ is given by Equation \eqref{eqn: sets recording if we hit boundary at j glanc point}.

%\SD{Shouldn't there be an easy formula for $\mathcal I(U_{\mathbf k})$?}

If we consider $\widetilde P$ defined in Equation \eqref{eqn: Poinc return map in sympl coord} as the Poincar\'e return map $P$ in the local coordinate $\kappa_0$, then we get an expansion of $\det( d\widetilde P(z) - I)$ in terms of singular functions where the coefficient corresponding to each singular function in the expansion does not vanish. Precisely, we show the following. 

\begin{prop}
\label{prop: expans of det of Poinc ret map in coord minus ident with non-zero coeff}
Let the functions $\lambda_{\textbf i}$ be defined by Equation \eqref{eqn: fns giving assymptotics of determinant of diff of Poinc return map minus ident} and the matrices $B_{\textbf i}$ by Equation \eqref{eqn: matrices in expansion of det of Poinc return map minus ident}. If we make $U$ small enough and assume that the assumptions in Equation \eqref{eqn: curvature assumption} hold, then there exists a constant $C > 0$ such that 
\begin{equation}
    \label{eqn: upper bound on trace of matrix in expans of det of Poin minus id}
    C^{-1} <|\tr B_{\textbf i}(z) | < C \quad \text{for all} \quad \textbf i \in \{ 0,1 \}^m \setminus (0, \dots, 0) \quad \text{and all} \quad z \in \kappa_0(U).
\end{equation}
Moreover, we have
    \begin{equation}
        \label{eqn: expansion of det of conjug Poinc return map minus Ident with non-zero coeff}
        \det( d \widetilde P(z) - I ) = 2 - \sum_{\textbf i \in \mathcal I(U_{\textbf k})} \lambda_{\textbf i}(z) \tr B_{\textbf i}(z) \neq 0 \quad \text{for all} \quad \textbf k \in \{ 0, 1 \}^m \quad \text{and} \quad z \in \kappa_0(U_{\textbf k}) 
    \end{equation}
    and
    \begin{equation}
        \label{eqn: det of Poinc minus ident when miss bound near glanc set is non-zero and bounded}
        C^{-1} < |\det(d \widetilde P(z) - I )| < C \quad \text{for all} \quad z \in \kappa_0\left(U_{(0,\dots,0)}\right). 
    \end{equation}
\end{prop}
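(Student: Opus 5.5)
The plan is to get everything from continuity at $z=0$ combined with Corollary \ref{cor: lead term in trac of dP non-zero} and Lemma \ref{lemma: no tang vec mapped to tang vec}.

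\textbf{Bounds on $\tr B_{\textbf i}$.} Each $d\widetilde\Phi_j$ is smooth near the relevant base point, since $\Phi_j$ only involves transversal reflections (Lemma \ref{lemma: Poinc map with strict reflections between poinc secs}). The maps $\widetilde Q_j$ and $\widetilde R\circ\widetilde Q_j$ are continuous and send $0$ to $0$. Hence $z\mapsto B_{\textbf i}(z)$ is continuous on $\kappa_0(U)$, with $B_{\textbf i}(0)=d\widetilde\Phi_{m-1}(0)N_{i_{m-1}}\cdots d\widetilde\Phi_0(0)N_{i_0}$. For $\textbf i\neq(0,\dots,0)$, Corollary \ref{cor: lead term in trac of dP non-zero} gives $\tr B_{\textbf i}(0)\neq 0$. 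There are finitely many such $\textbf i$, so after shrinking $U$ we obtain \eqref{eqn: upper bound on trace of matrix in expans of det of Poin minus id} with $C$ about $2\max_{\textbf i}(|\tr B_{\textbf i}(0)|+|\tr B_{\textbf i}(0)|^{-1})$.

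\textbf{The case $z\in\kappa_0(U_{(0,\dots,0)})$.} Here $\mathcal I(z)=\{(0,\dots,0)\}$ and $\lambda_{(0,\dots,0)}=1$. By Lemma \ref{lemma: det of Poinc ret map minus ident}, $\det(d\widetilde P(z)-I)=2-\tr B_{(0,\dots,0)}(z)$, and this tends to $\det(B_{(0,\dots,0)}(0)-I)$ because $\det B=1$ for $2\times2$ symplectic matrices. That limit is nonzero by the first part of Lemma \ref{lemma: no tang vec mapped to tang vec}. Shrinking $U$ then gives \eqref{eqn: det of Poinc minus ident when miss bound near glanc set is non-zero and bounded}.

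\textbf{The case of general $\textbf k$.} The identity in \eqref{eqn: expansion of det of conjug Poinc return map minus Ident with non-zero coeff} follows from Lemma \ref{lemma: det of Poinc ret map minus ident} and \eqref{eqn: indices that record bound hit time near glanc set same on subset mak partition}, so what remains is nonvanishing. For $z\in\kappa_0(U_{\textbf k})$ with $\textbf k\neq 0$, every $j$ with $k_j=1$ has $\eta_2(\widetilde Q_j(z))>0$ small, and this quantity tends to $0$ as $U$ shrinks. So the $\lambda_{\textbf i}$ with $\textbf i\neq 0$ are large.

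The idea is to isolate a dominant term. I take $\textbf i=\textbf k$, whose $\lambda_{\textbf k}=\prod_{k_j=1}\eta_2(\widetilde Q_j)^{-1/2}$ dominates every $\lambda_{\textbf i}$ with $\textbf i<\textbf k$. Concretely, $\lambda_{\textbf i}/\lambda_{\textbf k}=\prod_{j:k_j=1,i_j=0}\eta_2(\widetilde Q_j)^{1/2}$, which is at most $\max_j\eta_2(\widetilde Q_j)^{1/2}$. That maximum can be made less than any prescribed $\varepsilon$ on $\kappa_0(U)$ by shrinking $U$. It follows that
$$|\det(d\widetilde P-I)|\ge\lambda_{\textbf k}\Bigl(C^{-1}-\varepsilon\,(2+2^mC)\Bigr)>0,$$
where I used $\lambda_{\textbf k}\ge1$ for small $U$ and the bounds on $\tr B_{\textbf i}$ from the first step.

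The main obstacle I expect is this domination step. Individual ratios $\lambda_{\textbf i}/\lambda_{\textbf k}$ need not be small when they only omit a factor close to $1$. They are nonetheless uniformly small here, because every omitted factor is $\eta_2^{1/2}$ evaluated at a point near the glancing set. I must check that $\widetilde Q_j(z)\to0$ uniformly, which holds by continuity of $\widetilde Q_j$ with $\widetilde Q_j(0)=0$. Everything else is continuity.
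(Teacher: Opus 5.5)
Your proposal is correct and follows essentially the same route as the paper. The trace bounds come from continuity of $B_{\textbf i}$ together with Corollary~\ref{cor: lead term in trac of dP non-zero}, the case $\textbf k\neq 0$ uses domination by the $\lambda_{\textbf k}$ term (your uniform bound $\lambda_{\textbf i}/\lambda_{\textbf k}\le\max_j\eta_2(\widetilde Q_j)^{1/2}$ is a quantitative form of the paper's $\lambda_{\textbf i}/\lambda_{\textbf k}\to 0$), and the case $\textbf k=0$ uses the first part of Lemma~\ref{lemma: no tang vec mapped to tang vec}.

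There are two trivial bookkeeping fixes. In your final estimate, $C$ must also bound $|\tr B_{(0,\dots,0)}|$; your first step excludes that index, but continuity covers it. Absorbing the constant $2$ needs $\lambda_{\textbf k}\ge\varepsilon^{-1}$, not merely $\lambda_{\textbf k}\ge 1$; this holds because $\lambda_{\textbf k}^{-1}=\lambda_{(0,\dots,0)}/\lambda_{\textbf k}\le\varepsilon$.
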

\begin{proof}

    From the definition of the matrices $B_{\textbf i}$, we have that 
    $$
    B_{\textbf i}(z) = d\widetilde \Phi_{m-1}(\widetilde R (\widetilde Q_{m-1}(z))) N_{i_{m-1}} \cdots d\widetilde \Phi_{0}(\widetilde R (\widetilde Q_{0}(z))) N_{i_{0}},
    $$
    where the functions $\widetilde \Phi_j$ are smooth and the functions $\widetilde R$ and $\widetilde Q_j$ continuous. Recall also that $N_0 = \begin{pmatrix}
        1 & 0 \\
        0 & 1
    \end{pmatrix}$ and $N_1 = \begin{pmatrix}
        0 & 1\\
        0 & 0
    \end{pmatrix}$. After possibly making $U$ small, Corollary \ref{cor: lead term in trac of dP non-zero} implies that there exists a constant $C > 0$ such that for all $\textbf i \in \{0, 1 \}^m \setminus \{ (0,\dots,0) \}$ we have $C^{-1}<|\tr B_{\textbf i}(z)| < C$ for all $z \in \kappa_0(U)$ because $\tr B_{\textbf i}(z)$ is continuous in $z$.

    Let $\textbf k \in \{0,1 \}^m$. By Equation \eqref{eqn: indices that record bound hit time near glancing same on subset mak partition}, see that $\mathcal I(z) = \mathcal I(U_{\textbf k})$ for all $z \in \kappa_0(U_{\textbf k})$ and that $\textbf{k} \in \mathcal I(U_{\textbf k})$. Therefore, Lemma \ref{lemma: det of Poinc ret map minus ident} gives that 
    $$
    \det( d \widetilde P(z) - I ) = 2 - \sum_{\textbf i \in \mathcal I(U_{\textbf k})} \lambda_{\textbf i}(z) \tr B_{\textbf i}(z) \quad \text{for all} \quad z \in \kappa_0(U_{\textbf k}).
    $$
    If $\textbf k \neq 0$ and $z \in \kappa_0(U_{\textbf{ k}})$, then the definition of $\lambda_{\textbf{k}}>0$ shows that $\lambda_{\textbf{k}}(z) \rightarrow + \infty$ as $z  \rightarrow 0$. Furthermore, if $\textbf{i} \in \mathcal{I}(U_{\textbf{k}})$ is such that $\textbf i \neq \textbf k$, then we have that 
    $$
    \frac{\lambda_{\textbf i}(z)}{\lambda_{\textbf k}(z)} \rightarrow 0 \quad \text{as} \quad z \rightarrow 0.
    $$
    Therefore, if we make $U$ small enough, we can ensure that 
    $$
    \det( d \widetilde P(z) - I ) \neq 0
    $$
    for all $z \in \kappa_0(U_{\textbf k})$ with $\textbf k \neq 0$.

    Finally, for any $z \in \kappa_0\left(U_{(0,\dots,0)}\right)$, see that 
    $$
    d\widetilde P(z) = d\widetilde \Phi_{m-1}( \widetilde Q_{m-1}(z)) N_{0} \cdots d\widetilde \Phi_{0}(\widetilde Q_{0}(z)) N_{0}
    $$
    because $\widetilde R (\widetilde Q_j(z)) = \widetilde Q_j(z)$ since $\eta_2(\widetilde Q_j(z)) < 0$ for all $0 \leq j < m$. After making $U$ small again if needed and $C$ large enough, Lemma~\ref{lemma: no tang vec mapped to tang vec} implies that 
    $$
    C^{-1} < |\det(d \widetilde P(z) - I )| < C \quad \text{for all} \quad z \in \kappa_0\left(U_{(0,\dots,0)}\right). 
    $$
\end{proof}

As an immediate consequence of Proposition \ref{prop: expans of det of Poinc ret map in coord minus ident with non-zero coeff}, we get the following:
\begin{corr}
    \label{corr: limits diff of Poinc retur map minus id based on reflect pts}
    Let $y_0^-$ be defined by Lemma \ref{lemma: Poinc return map for closed traj with m glancing pts}, $\textbf k \in \{0, 1 \}^m$, and $x_n \in U_{\textbf k}$ be such that $x_n \rightarrow y_0^-$. If the assumptions in Equation \eqref{eqn: curvature assumption} hold, then 
\begin{equation}
    \label{eqn: invers of det of Poinc return map minus ident on each part component}
    \begin{split}
        c_{\textbf k} &:= \lim_{n \rightarrow \infty} \frac{1}{\left|\det \left( d \left( \kappa_0 \circ  P \circ  \kappa_0^{-1} \right)(\kappa_0(x_n)) - I \right) \right|} \\
        &\, = \begin{cases}
            \left|\det \left( d\widetilde \Phi_{m-1}(0)  N_0  \cdots  d\widetilde \Phi_0 (0) N_{0} - I \right) \right|^{-1} \neq 0 & \text{if} \; \textbf{ k} = 0 \\
            0 & \text{else},
        \end{cases}
    \end{split}
\end{equation}
where $\widetilde \Phi_j$ are defined by Equation \eqref{eqn: Poinc return map in sympl coord} and $N_0$ by Equation \eqref{eqn: matrices in expansion of det of Poinc return map minus ident}. Furthermore, if $\psi$ is another coordinate for $S \Sigma$ near $y_0^-$, then 
\begin{equation}
    \label{eqn: coord independence of the constants c_k}
    \lim_{n \rightarrow \infty} \frac{1}{\left|\det \left( d \left( \psi \circ  P \circ  \psi^{-1} \right)(\psi(x_n)) - I \right) \right|} = c_{\textbf k}.
\end{equation}
\end{corr}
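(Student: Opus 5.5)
The argument has two parts: compute the limit using the expansion in Proposition \ref{prop: expans of det of Poinc ret map in coord minus ident with non-zero coeff}, then transfer it to an arbitrary chart by a change-of-coordinates argument.

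\textbf{Case $\textbf k = 0$.} For $z\in\kappa_0(U_{(0,\dots,0)})$, the proof of Proposition \ref{prop: expans of det of Poinc ret map in coord minus ident with non-zero coeff} shows that $\widetilde R$ acts as the identity near $\widetilde Q_j(z)$. Hence
$$d\widetilde P(z)=d\widetilde\Phi_{m-1}(\widetilde Q_{m-1}(z))N_0\cdots d\widetilde\Phi_0(\widetilde Q_0(z))N_0 .$$
The maps $\widetilde\Phi_j$ are smooth, and $\widetilde Q_j$ is continuous with $\widetilde Q_j(\kappa_0(x_n))\to 0$. So $d\widetilde P(\kappa_0(x_n))$ converges to $d\widetilde\Phi_{m-1}(0)N_0\cdots d\widetilde\Phi_0(0)N_0$. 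By Lemma \ref{lemma: no tang vec mapped to tang vec}, the determinant of this limit minus $I$ is nonzero. Continuity of $\det$ then gives the stated value of $c_{0}$, and it is nonzero.

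\textbf{Case $\textbf k\neq 0$.} By \eqref{eqn: expansion of det of conjug Poinc return map minus Ident with non-zero coeff},
$$\det(d\widetilde P(z)-I)=2-\lambda_{\textbf k}(z)\tr B_{\textbf k}(z)-\sum_{\textbf i\ne\textbf k}\lambda_{\textbf i}(z)\tr B_{\textbf i}(z).$$
We have $\lambda_{\textbf k}(z)\to\infty$, and for $\textbf i\neq \textbf k$ in $\mathcal I(U_{\textbf k})$ the ratio $\lambda_{\textbf i}/\lambda_{\textbf k}\to 0$. The bound \eqref{eqn: upper bound on trace of matrix in expans of det of Poin minus id} gives $|\tr B_{\textbf k}|>C^{-1}$ and $|\tr B_{\textbf i}|<C$ (the $\textbf i=0$ term is bounded since $\lambda_0=1$). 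Dividing by $\lambda_{\textbf k}$ therefore shows $|\det(d\widetilde P-I)|\ge \lambda_{\textbf k}/(2C)\to\infty$, so the reciprocal tends to $0$.

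\textbf{Coordinate independence.} Let $\psi$ be another chart and set $\theta=\psi\circ\kappa_0^{-1}$, a smooth diffeomorphism. Then
$$d(\psi P\psi^{-1})(\psi(x))=d\theta(P(x))\,d\widetilde P(z)\,d\theta(z)^{-1},\qquad z=\kappa_0(x).$$
Write $A_n=d\theta(z_n)$ and $A_n'=d\theta(\widetilde P(z_n))$. Since $P$ is continuous with $P(y_0^-)=y_0^-$, both $A_n$ and $A_n'$ converge to $A=d\theta(0)$. Factor
$$A_n'd\widetilde P A_n^{-1}-I=A_n'\bigl(d\widetilde P-I\bigr)A_n^{-1}+(A_n'-A_n)A_n^{-1}.$$
\begin{itemize}
\item When $\textbf k=0$, $d\widetilde P$ stays bounded, so the determinant converges to $\det(A(M-I)A^{-1})=\det(M-I)$, where $M$ is the limit from the first case.
\item When $\textbf k\ne0$, $d\widetilde P$ is unbounded and this is the main obstacle. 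The perturbation $(A_n'-A_n)A_n^{-1}$ is small, but it multiplies the large entries of $d\widetilde P$ in the $2\times2$ determinant.
\end{itemize}

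To handle the $\textbf k\ne0$ case I use the $2\times2$ identity $\det(X-I)=\det X-\tr X+1$ and $\det d\widetilde P=1$. This gives $\det(A_n'd\widetilde PA_n^{-1}-I)=\det(A_n'A_n^{-1})-\tr(A_n^{-1}A_n'\,d\widetilde P)+1$. Since $A_n^{-1}A_n'=I+E_n$ with $E_n\to0$, the trace term equals $\tr d\widetilde P+\tr(E_n d\widetilde P)$. Here $|\tr(E_n d\widetilde P)|\le \|E_n\|\,\|d\widetilde P\|=o(\lambda_{\textbf k})$, because $\|d\widetilde P\|=O(\lambda_{\textbf k})$ by \eqref{eqn: trace of poinc return map}. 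Meanwhile $|\tr d\widetilde P|\gtrsim\lambda_{\textbf k}$. Hence the determinant still grows like $\lambda_{\textbf k}$, and the reciprocal tends to $0=c_{\textbf k}$.
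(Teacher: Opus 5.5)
Your proposal is correct and follows the paper's proof essentially step for step. It uses the same reduction $\widetilde P=\widetilde\Phi_{m-1}\circ\cdots\circ\widetilde\Phi_0$ on $\kappa_0(U_0)$, the same dominance of $\lambda_{\textbf k}\tr B_{\textbf k}$ when $\textbf k\neq 0$, and, for the change of chart, the same identity $\det(X-I)=\det X-\tr X+1$ with $\det d\widetilde P=1$. Your $A_n^{-1}A_n'=I+E_n$ is exactly the paper's matrix $M(\kappa_0(x_n))$ with $M(0)=I$.

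One small slip: in the chain rule the outer factor should be $d\theta(\widetilde P(z))$ rather than $d\theta(P(x))$, as your later notation $A_n'$ already reflects.
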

\begin{proof}
    From the definition of $\widetilde P = \kappa_0  \circ P \circ \kappa_0^{-1}$ in Equation \eqref{eqn: Poinc return map in sympl coord}, the function $\widetilde P$ is continuous and is given explicitly by $\widetilde P = \widetilde \Phi_{m-1} \circ \widetilde R \circ \cdots \circ \widetilde \Phi_0 \circ \widetilde R$, where $\widetilde R$ is defined by Lemma \ref{lemma: Poincare map between Poinc secs near glanc point}. Suppose that $\textbf k = 0$. Then for any $z \in \kappa_0(U_{\textbf k})$, we have 
    $$
    \widetilde P(z) = \widetilde \Phi_{m-1}\circ \widetilde \Phi_{m-2} \circ \dots \circ \widetilde \Phi_1 \circ \widetilde \Phi_0(z).
    $$
    In fact, from the definition of $U_0$, we have that $\eta_n (\widetilde Q_j(z)) < 0$ for all $0 \leq j < m$ and all $z \in \kappa_0(U_{0})$, where $\widetilde Q_j$ is defined by Equation \eqref{eqn: Poincare map up to the jth Poincare sec before glancing point}. Consequently, we get that $\widetilde R(\widetilde Q_j(z)) = \widetilde Q_j(z)$ for all $0 \leq j < m$ and all $z \in \kappa_0(U_0)$. So, Proposition \ref{prop: expans of det of Poinc ret map in coord minus ident with non-zero coeff} implies that  $|\det( d\widetilde P(\kappa_0(x_n)) - I) | \rightarrow |\det(d\widetilde \Phi_{m-1}(0) N_0 d\widetilde \Phi_0(0)N_0 - I)|$ as $n \rightarrow \infty$ by noticing that $N_0$ is the identity matrix and that $\widetilde Q_j(0) = 0$ for all $0 \leq j < m$.

    Next, suppose that $\textbf k = (k_0,\dots , k_{m-1}) \in \{0,1\}^m \setminus 0 $. Proposition \ref{prop: expans of det of Poinc ret map in coord minus ident with non-zero coeff} says that there exists a constant $C>0$ such that for any $z \in \kappa_0(U_{\textbf k})$, we have 
    $$
    \det(d\widetilde P(z) - I) = 2 - \sum_{\textbf i \in \mathcal I(U_{\textbf k})} \lambda_{\textbf i}(z) \tr B_{\textbf i}(z), 
    $$
    where $\lambda_{\textbf i}$ are defined by Equation \eqref{eqn: fns giving assymptotics of determinant of diff of Poinc return map minus ident} and $ C^{-1} < |\tr B_{\textbf i}(z)| < C $ for all $\textbf i \in \mathcal I(U_{\textbf k})$.
    
    From the definition of $\mathcal I (U_{\textbf k})$ given in Equation \eqref{eqn: indices that record bound hit time near glancing same on subset mak partition}, it is immediate that $\textbf k \in \mathcal I (U_{\textbf k})$. Additionally, for each $\textbf i  = (i_0, \dots, i_{m-1}) \in \mathcal I (U_{\textbf k}) \setminus \{ \textbf k\}$, there exists $0 \leq l < m$ such that $i_l = 0$ and $k_l = 1$. Consequently, for all $z \in \kappa_0(U_{\textbf k})$, we get that 
    $$
    0 < \frac{\lambda_{\textbf i}(z) }{\lambda_{\textbf k}(z) }  \leq \eta_n(\widetilde Q_l(z)),
    $$
    where $\eta_n(\widetilde Q_l(z)) \rightarrow 0$ as $z \rightarrow 0$.
    
    From the definition of the matrix $B_{\textbf k}$ in Equation \eqref{eqn: matrices in expansion of det of Poinc return map minus ident}, we see that $B_{\textbf k}(z)$ is continuous in $z$. Then, Proposition \ref{prop: expans of det of Poinc ret map in coord minus ident with non-zero coeff} implies that there exists a constant $r \neq 0$ such that $\tr B_{\textbf k}(\kappa_0(x_n)) \rightarrow r$ as $n \rightarrow \infty$. But since $\lambda_{\textbf k}(\kappa_0(x_n)) \rightarrow + \infty$ as $n \rightarrow +\infty$, we get that 
    \begin{equation}
        \lim_{n \rightarrow \infty} \frac{1}{|d \widetilde P (\kappa_0(x_n) ) - I|} = 0.
    \end{equation}
    Thus, we have shown Equation \eqref{eqn: invers of det of Poinc return map minus ident on each part component}.

    To prove Equation \eqref{eqn: coord independence of the constants c_k}, notice that $\psi \circ P \circ \psi^{-1} = (\psi \circ \kappa_0^{-1}) \circ (\kappa_0 \circ P \circ \kappa_0^{-1}) \circ (\kappa_0 \circ \psi^{-1})$. As a result, for any $\textbf k \in \{ 0, 1\}^m$ and any $x \in U_{\textbf k}$ we have that 
    $$
    d (\psi \circ P \circ \psi^{-1}) (\psi(x)) = d(\psi \circ \kappa_0^{-1})(\widetilde P (\kappa_0(x)) \; d \widetilde P(\kappa_0(x)) \; d(\kappa_0 \circ \psi^{-1})(\psi(x))
    $$
    and 
    $$
    \det(d (\psi \circ P \circ \psi^{-1}) (\psi(x)) - I) = \det(M(\kappa_0(x))d \widetilde P (\kappa_0(x)) - I), 
    $$
    where $M(z) = d(\kappa_0 \circ \psi^{-1})(\psi \circ \kappa_0^{-1}(z)) \, d (\psi \circ \kappa_0^{-1})(\widetilde P (z) )$ for all $z \in \kappa_0(U)$ and $M(0) = I$. Lemma \ref{lemma: det of Poinc ret map minus ident} and Equation \eqref{eqn: indices that record bound hit time near glancing same on subset mak partition} imply that 
    \begin{equation*}
        \begin{split}
            \det(d (\psi \circ P \circ \psi^{-1}) (\psi(x_n)) - I) & = 1 + \det(M(\kappa_0(x_n))) \\
            &- \sum_{\textbf i \in \mathcal I (U_{\textbf k})} \lambda_{\textbf i}(\kappa_0(x_n)) \tr (M(\kappa_0(x_n)) B_{\textbf i}(\kappa_0(x_n))). 
        \end{split}
    \end{equation*}
    See that $\det(M(\kappa_0(x_n))) \rightarrow 1$ and $\tr (M(\kappa_0(x_n)) B_{\textbf i}(\kappa_0(x_n))) \rightarrow \tr (B_{\textbf i}(0))$ as $n \rightarrow \infty$. Thus, we conclude that 
    $$
    \lim_{n \rightarrow \infty} \frac{1}{|d (\psi \circ P \circ \psi^{-1}) (\psi(x_n)) - I|} = c_{\textbf k}
    $$
    as desired.
\end{proof}
\subsection{Injectivity of the Poincar\'e return map minus identity.}
We consider a closed billiard trajectory with exactly one glancing point and show that the Poincar\'e return map in a local coordinate minus the identity is injective when restricted to the subsets $U_{\textbf k}$ defined by Equation \eqref{eqn: partition of U based on how many boundary hitting points}.  Specifically,

\begin{lemm}
    \label{lemma: injectivity for one glancing}
    Let the Poincar\'e section $U$ and $m \in \mathbb N$ be defined by Lemma \ref{lemma: Poinc return map for closed traj with m glancing pts}. If the assumptions in Equation \eqref{eqn: curvature assumption} hold, $m=1$, and $U$ is made small enough, then 
    \begin{equation}
        \label{eqn: injectivity for one glancing}
        \begin{split}
            &\widetilde P(z) - z \quad \text{restricted on the set $\overline{\kappa_0(U_0)}$ or $\overline{\kappa_0(U_1) }$ is injective}, 
        \end{split}
    \end{equation}
    where $\widetilde P$ is defined by Equation \eqref{eqn: Poinc return map in sympl coord}, $\kappa_0$ by Lemma \ref{lemma: relating billiard flow with flow from fried model}, and the sets $U_{\textbf k}$ ( $\textbf k = 0 , 1$) by Equation \eqref{eqn: partition of U based on how many boundary hitting points}. Here $\overline{\kappa_0(U_{\textbf k})}$ ($\textbf k = 0, 1$) denotes the closure of $\kappa_0(U_{\textbf k})$.
\end{lemm}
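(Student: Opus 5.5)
With $m=1$ we have $\widetilde P=\widetilde\Phi_0\circ\widetilde R$, where $\widetilde\Phi_0$ is a smooth diffeomorphism near $0$ and $\widetilde R(y_2,\eta_2)=(y_2+2\sqrt{\eta_2}\,\mathbf 1_{\eta_2>0},\eta_2)$. In the coordinates $z=(y_2,\eta_2)$ on $\kappa_0(S_0^-)$, the sets $\kappa_0(U_0)$ and $\kappa_0(U_1)$ are the half-planes $\{\eta_2<0\}$ and $\{\eta_2>0\}$, intersected with $\kappa_0(U)$, since $\widetilde Q_0=\mathrm{id}$. The plan is to treat the two closed half-discs separately and, on each, to apply a mean-value / inverse-function argument to $F(z)=\widetilde P(z)-z$.

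\emph{The set $\overline{\kappa_0(U_0)}$.} Here $\widetilde P=\widetilde\Phi_0$ is smooth up to and across $\{\eta_2=0\}$. Its differential at $0$ is $d\widetilde\Phi_0(0)N_0$, and Lemma \ref{lemma: no tang vec mapped to tang vec} gives $\det(d\widetilde\Phi_0(0)-I)\neq0$. By the inverse function theorem, $\widetilde\Phi_0-\mathrm{id}$ is then injective on a small disc, and so injective on its closed lower half. To be quantitative, one can write $F(z)-F(z')=\int_0^1 (dF)(z'+s(z-z'))\,ds\,(z-z')$ and note that the averaged matrix is close to the invertible $d\widetilde\Phi_0(0)-I$.

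\emph{The set $\overline{\kappa_0(U_1)}$.} Write $\widetilde P(y_2,\eta_2)=\widetilde\Phi_0(y_2+2\sqrt{\eta_2},\eta_2)$. The convenient change of variable is $s=\sqrt{\eta_2}\ge0$, which is a homeomorphism of the closed upper half-disc onto $\{s\ge0\}$. In the variables $(y_2,s)$ we have
\[
G(y_2,s)=F(y_2,s^2)=\widetilde\Phi_0(y_2+2s,s^2)-(y_2,s^2),
\]
and $G$ is smooth, including at $s=0$. Its differential at the origin is
\[
dG(0)=\bigl(d\widetilde\Phi_0(0)e_1-e_1,\ 2\,d\widetilde\Phi_0(0)e_1\bigr),
\]
where $e_1=\partial_{y_2}$ (the $s$-derivative of $s^2$ vanishes at $0$). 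Its determinant is $-2\det\bigl(e_1,d\widetilde\Phi_0(0)e_1\bigr)=-2\langle d\widetilde\Phi_0(0)\partial_{y_2},\partial_{\eta_2}\rangle$. This is nonzero by the second statement of Lemma \ref{lemma: no tang vec mapped to tang vec} with $p=r=0$, or equivalently by Corollary \ref{cor: lead term in trac of dP non-zero}, since $\tr B_1(0)$ is exactly this entry.

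Hence $G$ is a local diffeomorphism near $0$, and in particular it is injective on a neighborhood of $0$ in $\{s\ge0\}$. Since $(y_2,s)\mapsto(y_2,s^2)$ is a bijection of $\{s\ge0\}$ onto the closed upper half-plane, $F$ is injective on $\overline{\kappa_0(U_1)}$ once $U$ has been shrunk.

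The main obstacle is exactly this upper-half case. In the original variables $dF$ blows up like $\eta_2^{-1/2}$, so no direct mean-value estimate works there; the desingularizing substitution $\eta_2=s^2$ is what resolves it. One point needs care: $G$ is defined only on $s\ge0$. To invoke the inverse function theorem, extend $G$ by the same smooth formula to $s<0$, obtain injectivity on a full disc in $(y_2,s)$, and then restrict to $s\ge0$. Finally, both neighborhoods should be chosen inside a single disc and the smaller one taken as the new $U$.
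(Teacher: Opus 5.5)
Your proof is correct. On $\overline{\kappa_0(U_0)}$ it coincides with the paper's argument: the inverse function theorem applied to $\widetilde\Phi_0-\mathrm{id}$, using the first part of Lemma~\ref{lemma: no tang vec mapped to tang vec}. On $\overline{\kappa_0(U_1)}$ you take a genuinely different route.

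\emph{The paper's route.} It stays in the coordinates $(y_2,\eta_2)$ and makes $\kappa_0(U)$ convex. It then writes $z-w=\int_0^1 d\widetilde P(z(t))(z-w)\,dt$ along the segment, where $d\widetilde P(z)$ is $d\widetilde\Phi_0(\widetilde R(z))$ composed with the shear $e_2\mapsto e_2+\eta_2^{-1/2}e_1$. Finally it splits according to the unit direction $v$ of $z-w$:
\begin{itemize}
\item if $|\langle v,e_2\rangle|\ge\gamma$, the singular term $\eta_2^{-1/2}\langle d\widetilde\Phi_0 e_1,e_2\rangle$ dominates;
\item if $|\langle v,e_2\rangle|<\gamma$, a chain of explicit inequalities gives a contradiction.
\end{itemize}
So your remark that no direct mean-value estimate works is too strong. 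One does work, but only through this delicate case analysis.

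\emph{Your route.} The substitution $\eta_2=s^2$ removes the singularity and reduces everything to a single application of the inverse function theorem. The nondegeneracy input is the same, $\langle d\widetilde\Phi_0(0)\partial_{y_2},\partial_{\eta_2}\rangle\neq 0$, but it now appears transparently as $-\tfrac12\det dG(0)$. The identity $F(y_2,0)=G(y_2,0)$ on the glancing line, which you use implicitly, holds because $\widetilde R=\mathrm{id}$ there.

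\emph{What your route buys.} It is shorter and needs no convexity. It also gives more structure. Set $\Psi(y_2,s)=(y_2,s^2)$. Then $F$ restricted to the closed upper half is $G\circ\Psi^{-1}$, with $G$ a diffeomorphism of a full disc. Hence the image of $\kappa_0(U_1)$ is, locally, one side of the smooth curve $F(\{\eta_2=0\})$. That is essentially the content of Lemma~\ref{lemma: image of missing or hitting subset under returm map minus identity}, for which the paper needs a separate continuity/supremum argument.

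\emph{One point to state explicitly.} In the final shrinking step, say that taking $\kappa_0(U)$ to be a disc of radius $r$ places $\Psi^{-1}\bigl(\overline{\kappa_0(U_1)}\bigr)$ inside a ball of radius $O(\sqrt r)$. For $r$ small this lies inside the disc on which $G$ is injective.
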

\begin{proof}

    Since $m=1$, Equation \eqref{eqn: Poinc return map in sympl coord} gives that 
    $$
    \widetilde P = \widetilde \Phi_0 \circ \widetilde R,
    $$
    where  
    $$
    \widetilde R\left(y_2, \eta_2\right) = \begin{cases}
        \left(y_2, \eta_2\right) &\text{if} \quad \eta_2 \leq 0\\
        \left(y_2 + 2 \sqrt{\eta_2}, \eta_2\right) &\text{if} \quad \eta_2 > 0 
    \end{cases}
    $$ 
    according to Lemma \eqref{lemma: Poincare map between Poinc secs near glanc point}. Let the set $U$ be given by Lemma \ref{lemma: Poinc return map for closed traj with m glancing pts}.
    
    \noindent
    \textbf{Step 1:} Proving the injectivity of $\widetilde P(z) - z$ on the set $\overline{\kappa_0(U_0)}$. 
    
    Notice that by definition $\kappa_0(U_0) = \kappa_0(U) \cap \{ \eta_2 < 0 \}$ and that for any ~$z \in \kappa_0(U) \cap \{ \eta_2 \leq 0 \}$ we have 
    $$
    \widetilde P(z) - z = \widetilde \Phi_0(z) - z.
    $$
    Lemma \ref{lemma: no tang vec mapped to tang vec} gives that $d \widetilde \Phi_0(0) N_0 - I$ is invertible, with $N_0$ being the identity matrix as specified in Equation \eqref{eqn: matrices in expansion of det of Poinc return map minus ident}. So by making $U$ small if needed, the inverse function theorem gives that the restriction of the function $\widetilde P(z) - z $ on the set $\overline{\kappa_0(U_0)}$ is injective.
    
    \bigskip
    \noindent
    \textbf{Step 2:} Proving the injectivity of $\widetilde P(z) - z$ on the set $\overline{\kappa_0(U_1)}$.
    
    By definition, we have that $\kappa_0(U_1) = \kappa_0(U) \cap \{ \eta_2 > 0 \}$. Assume that there exist $w \in \overline{\kappa_0(U_1)}$ and $z \in \overline{\kappa_0(U_1)} \setminus \{ \eta_2 = 0 \}$ such that  $\widetilde P (z) - z = \widetilde P(w) - w$. We define $z(t) = t z + (1-t) w \in \overline{\kappa_0(U_1)}$. In fact we can guarantee that $z(t) \in \overline{\kappa_0(U_1)}$ by making the set $U$ small and convex. See that $z(t) \in  \overline{\kappa_0(U_1)} \setminus \{ \eta_2 = 0\}$ for all $t \in (0,1]$, so it follows that 
    $$
    z-w = \widetilde P(z) - \widetilde P(w) = \int_0^1 d \widetilde P (z(t)) (z-w) dt, 
    $$
    where $d\widetilde P (z(t)) = d\widetilde \Phi_0( \widetilde R (z(t)))  \begin{pmatrix}
        1 & \frac{1}{\sqrt{\eta_2(z(t))}}\\
        0 & 1
    \end{pmatrix}$ for all $t \in (0,1]$. Substituting for $d\widetilde P(z(t))$, we get that 
    \begin{equation}
    \label{eqn: cond that two points maps have same image under P minus ident}
        z-w = \int_0^1 \frac{\langle z-w, e_2 \rangle}{\sqrt{\eta_2(z(t))}}d\widetilde \Phi_0(\widetilde R(z(t))) (e_1) dt + \int_0^1 d\widetilde \Phi_0(\widetilde R(z(t))) (z-w) dt,
    \end{equation}
    where $e_1 = \partial_{y_2}$, $e_2 = \partial_{\eta_2}$,
    and $\langle \cdot, \cdot \rangle$ is the standard inner product on $\mathbb R^2$.

    Set $z-w = r v$, where $r \geq 0$ and $v \in \mathbb R^2$ is such that $\langle v , v \rangle = 1$. Additionally, choose $\gamma > 0$ small such that
    \begin{equation}
    \label{eqn: upper bound on norm of dPhi_0 of e_j}
       \gamma^{\frac{1}{4}} < |\langle d \widetilde \Phi_0(0)(e_1) , e_2 \rangle|, \quad  \gamma^{\frac{1}{4}} < \left| d \widetilde \Phi_0(0)(e_1) \right| < \gamma^{-
    \frac{1}{4}} \quad \text{and} \quad \gamma^{\frac{1}{4}} < \left| d \widetilde \Phi_0(0)(e_2) \right| < \gamma^{-
    \frac{1}{4}}.
    \end{equation}
    Indeed, notice that Lemma \ref{lemma: no tang vec mapped to tang vec} gives the lower bound $\gamma^{\frac{1}{4}} < |\langle d \widetilde \Phi_0(0)(e_1) , e_2 \rangle|$.
    
    \noindent
    \textbf{Case 1:} Suppose that $| \langle v, e_2 \rangle| \geq \gamma$.
    
    Equation \eqref{eqn: cond that two points maps have same image under P minus ident} gives that 
    \begin{equation}
    \label{eqn: proper of vector joining points with same displ of Poinc return map}
        \begin{split}
            &r\left( 1 - \int_0^1 \frac{1}{\sqrt{\eta_2(z(t))}} \left\langle d\widetilde \Phi_0(\widetilde R(z(t))) (e_1), e_2 \right \rangle dt \right) \langle v , e_2 \rangle \\
            & \quad \quad \quad = r \int_0^1 \left \langle d\widetilde \Phi_0(\widetilde R(z(t))) (v) , e_2 \right \rangle dt.
        \end{split}
    \end{equation}
    If we make $U$ small enough, then Equation \eqref{eqn: upper bound on norm of dPhi_0 of e_j} implies that 
    $$
    \left| \left\langle d\widetilde \Phi_0(\widetilde R(\tilde z)) (e_1), e_2 \right \rangle \right| > \gamma^{\frac{1}{4}} > 0
    $$
    for all $\tilde z \in \overline{\kappa_0(U)}$. We can also arrange for
    $$
    \left| 1 - \min_{\tilde z \in \overline{\kappa_0(U_1)}} \frac{\gamma^{\frac{1}{4}}}{\sqrt{\eta_2(\tilde z)}} \right| \gamma \gg \max_{\tilde z \in \overline{\kappa_0(U)}} \left( \max_{u \in \mathbb R^2: \langle u, u \rangle = 1} \left| \left \langle d\widetilde \Phi_0(\widetilde R(\tilde z)) (u) , e_2 \right \rangle \right| \right)
    $$ 
    by choosing $U$ small enough again. In fact, notice that the term $\min_{\tilde z \in \overline{\kappa_0(U_1)}}\frac{\gamma^{\frac{1}{4}}}{\sqrt{\eta_2(\tilde z)}}$ increases as $U$ becomes small. Consequently, because $|\langle v , e_2 \rangle| \geq \gamma$, we get that the term
    $$
    \left( 1 - \int_0^1 \frac{1}{\sqrt{\eta_2(z(t))}} \left\langle d\widetilde \Phi_0(\widetilde R(z(t))) (e_1), e_2 \right \rangle dt \right) \langle v , e_2 \rangle - \int_0^1 \left \langle d\widetilde \Phi_0(\widetilde R(z(t))) (v) , e_2 \right \rangle dt
    $$
    does not vanish. So, Equation \eqref{eqn: proper of vector joining points with same displ of Poinc return map} implies that $r = 0$, which shows that $z-w = 0$. 

    \noindent
    \textbf{Case 2:} Suppose that $|\langle v,e_2\rangle | < \gamma$.

    We can write $v = \left(a\sqrt{1 - \sigma^2} \right) e_1 + \sigma e_2$, where $|\sigma | \leq \gamma $ and $a = 1 \; \text{or} \; -1$. So Equation \eqref{eqn: cond that two points maps have same image under P minus ident} becomes 
    \begin{equation}
    \label{eqn: two points same image under P- I and along horiz dir}
        \begin{split}
            r v &= r \int_0^1 \left( a \sqrt{1 - \sigma^2} + \frac{\sigma}{\sqrt{\eta_2(z(t))}} \right) d \widetilde \Phi_0( \widetilde R(z(t))) (e_1) dt \\
            & +r \sigma \int_0^1 d \widetilde \Phi_0( \widetilde R(z(t))) (e_2) dt. 
        \end{split}
    \end{equation}

    Assume that $r \neq 0$. It follows that 
    \begin{equation}
        \label{eqn: decomp of v}
        v = C_1 u_0 + C_2 u_0 + u_1 + u_2 + u_3,
    \end{equation} 
    where 
    $$
    \begin{aligned}
        &C_1 = a \sqrt{1 - \sigma^2}, \quad C_2 = \int_0^1 \frac{\sigma}{\sqrt{\eta_2(z(t))}} dt, \quad u_0 = d \widetilde \Phi_0(0)(e_1) \\
        & u(z(t)) = d\widetilde \Phi_0(\widetilde R(z(t)))(e_1) - u_0, \quad u_1 = C_1 \int_0^1 u(z(t)) dt, \quad  u_2 = \int_0^1 \frac{\sigma u(z(t))}{\sqrt{\eta_2(z(t))}} dt,  \\
        & \text{and} \quad u_3 = \sigma \int_0^1 d \widetilde \Phi_0( \widetilde R(z(t))) (e_2) dt .
    \end{aligned}
    $$
    
    Let $ 0 < \epsilon < \frac{\gamma}{30}$. If we make $U$ small enough if needed and by using Equation \eqref{eqn: upper bound on norm of dPhi_0 of e_j}, we deduce that 
    \begin{equation}
    \label{eqn: upper bound on dPhi_0 of e_2 on U}
        \gamma^{\frac{1}{4}} < |u_0| < \gamma^{-\frac{1}{4}}, \quad \gamma^{\frac{1}{4}} < |\langle u_0, e_2 \rangle|,  \quad \sup_{\tilde z \in \overline{\kappa_0(U)}}|u(z)| < \epsilon, \quad \text{and} \quad |u_3| < \gamma^{\frac{3}{4}}.
    \end{equation}
    Recalling that $|v| = 1$, Equation \eqref{eqn: decomp of v} implies the following upper bound on $C_2$: 
    \begin{equation*}
        \begin{split}
            1 + |C_1 u_0| + |u_1| + |u_3| &\geq |C_2 u_0 + u_2|  \geq |C_2| (\gamma^{\frac{1}{4}} - \epsilon)  \geq \frac{1}{2}|C_2| \gamma^{\frac{1}{4}},
        \end{split}
    \end{equation*}
    where the second inequality follows from Equation \eqref{eqn: upper bound on dPhi_0 of e_2 on U}. Using Equation \eqref{eqn: upper bound on dPhi_0 of e_2 on U} again, we see that 
    $$
    |C_2| \leq (1 + \gamma^{-\frac{1}{4}} + \epsilon + \gamma^{\frac{3}{4}}) 2 \gamma^{-\frac{1}{4}} \leq 3 \gamma^{-\frac{1}{2}}.
    $$

    See also that Equation \eqref{eqn: decomp of v} gives
    $$
    |C_1 + C_2| \gamma^{-\frac{1}{4}} \geq |(C_1 + C_2)u_0| \geq 1 - (|u_1| + |u_2| + |u_3|) \geq 1 -(\epsilon + 3 \gamma^{-\frac{1}{2}} \epsilon + \gamma^{\frac{3}{4}}),
    $$
    where in the last inequality we used the bounds found in Equation \eqref{eqn: upper bound on dPhi_0 of e_2 on U}. Thus, we see that 
    $$
    |C_1 + C_2| \geq \frac{3 \gamma^{\frac{1}{4}}}{4}. 
    $$

    Finally, we use Equation \eqref{eqn: decomp of v} to get 
    $$
    \begin{aligned}
        |\langle v, e_2 \rangle| &\geq |C_1 + C_2| |\langle u_0, e_2 \rangle| - (|u_1|+|u_2| + |u_3|) \\
        & \geq \frac{3 \gamma^{\frac{1}{2}}}{4} - (\epsilon + |C_2| \epsilon + \gamma^{\frac{3}{4}}) \\
        & \geq \frac{ \gamma^{\frac{1}{2}}}{2},
    \end{aligned}
    $$
    where in the second inequality we use the bounds from Equation \eqref{eqn: upper bound on dPhi_0 of e_2 on U}. Because we assumed that $|\langle v,e_2 \rangle| < \gamma$, we get a contradiction. Thus, we have $r=0$, which implies that $z-w = 0$. 

    In both Case 1 and Case 2, we deduce that $z-w =0$. If $z,w \in \overline{\kappa_0(U_1)} \setminus \{ \eta_2 = 0 \}$, then we get the desired result that $z=w$. On the other hand, if $w \in \overline{\kappa_0(U_1)} \cap \{ \eta_2 = 0 \}$, then we get a contradiction because $z \in \overline{\kappa_0(U_1)} \setminus \{ \eta_2 = 0 \}$ would imply that $z \neq w$. As desired, we must have that $\widetilde P(z) - z \neq \widetilde P(w) - w$ if $w \in \overline{\kappa_0(U_1)} \cap \{ \eta_2 = 0 \}$ and $z \in \overline{\kappa_0(U_1)} \setminus \{ \eta_2 = 0 \}$.

\end{proof}

We get the following corollary of Lemma \ref{lemma: injectivity for one glancing}.

\begin{corr}
    \label{corr: almost closed implies close to z_0}
    Let $m \in \mathbb N$ and $U$ be defined by Lemma \ref{lemma: Poinc return map for closed traj with m glancing pts} and $\gamma > 0$. If the assumptions in Equation \eqref{eqn: curvature assumption} hold, $m=1$, and $U$ is made small as in Lemma \ref{lemma: injectivity for one glancing}, then there exists $\epsilon(\gamma) > 0$ such that
    $$
    \text{for all} \quad z \in \kappa_0(U),\quad
    |\widetilde P(z) - z| < \epsilon(\gamma) \ \Longrightarrow\   |z| < \gamma.
    $$
\end{corr}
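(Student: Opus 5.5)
The plan is a compactness argument built on the injectivity in Lemma~\ref{lemma: injectivity for one glancing}, with care taken because $\widetilde P$ is only continuous. Shrink $U$ as in that lemma and additionally assume that $\overline{\kappa_0(U)}$ is a compact convex set on which $\widetilde P$ is defined and continuous. This is possible because $\widetilde P = \widetilde \Phi_0 \circ \widetilde R$, $\widetilde R$ is continuous across $\{\eta_2 = 0\}$ (since $2\sqrt{\eta_2} \to 0$), and $\widetilde \Phi_0$ is smooth. Set $D(z) = \widetilde P(z) - z$, a continuous map on the compact set $K = \overline{\kappa_0(U)}$, and write $K = K_0 \cup K_1$ with $K_j = \overline{\kappa_0(U_j)}$, both compact.

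First I would show that $D(z) = 0$ on $K$ only at $z = 0$. We have $D(0) = 0$ because $\widetilde P(0) = 0$, that is, $P(y_0^-) = y_0^-$ and $\kappa_0(y_0^-) = 0$ in the section coordinates. The origin lies on $\{\eta_2 = 0\}$, so $0 \in K_0 \cap K_1$. By Lemma~\ref{lemma: injectivity for one glancing}, $D$ is injective on each of $K_0$ and $K_1$. Hence any $z \in K_j$ with $D(z) = 0 = D(0)$ must equal $0$.

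Next I would argue by contradiction. Suppose that for some $\gamma > 0$ no $\epsilon(\gamma)$ works. Then there are $z_k \in \kappa_0(U)$ with $|D(z_k)| < 1/k$ and $|z_k| \geq \gamma$. Passing to a subsequence lying in a single $K_j$ and using compactness, $z_k \to z_* \in K_j$ with $|z_*| \geq \gamma$. By continuity $D(z_*) = 0$, which contradicts the previous step. Equivalently, one can set $\epsilon(\gamma) = \min\{|D(z)| : z \in K,\ |z| \geq \gamma\}$, which is positive when the set is nonempty because a continuous function on a compact set attains its minimum; if the set is empty, any $\epsilon$ works.

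The only delicate point is justifying continuity of $D$ up to the boundary of each $K_j$, including at $\{\eta_2 = 0\}$, and checking that $0$ lies in both closures. Both follow from the explicit form of $\widetilde R$ in Lemma~\ref{lemma: Poincare map between Poinc secs near glanc point}. If the boundary of $\kappa_0(U)$ causes trouble, I would first shrink $U$ slightly so that its closure still lies in the domain on which Lemma~\ref{lemma: injectivity for one glancing} holds.
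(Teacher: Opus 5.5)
Your proposal is correct and follows essentially the same route as the paper, which minimizes $|\widetilde P(z)-z|$ over the compact set $\{z\in\overline{\kappa_0(U)}: |z|\ge\gamma\}$ and uses Lemma~\ref{lemma: injectivity for one glancing} to conclude that the minimum is positive. Your explicit observations that $0$ lies in both $\overline{\kappa_0(U_0)}$ and $\overline{\kappa_0(U_1)}$, and that $\widetilde P$ is continuous across $\{\eta_2=0\}$, fill in steps the paper leaves implicit.
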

\begin{proof}
    Define the compact set 
    $$
    V = \{ z \in \overline{\kappa_0(U)}: |z| \geq \gamma  \},
    $$
    where $\overline{\kappa_0(U)}$ is the closure of the open set $\kappa_0(U)$. It follows that there exists $\tilde z \in V$ such that 
    $$
    |\widetilde P(\tilde z) - \tilde z| = \inf_{z \in V} |\widetilde P (z) - z|.
    $$
    As a result, Lemma \ref{lemma: injectivity for one glancing} implies that $|\widetilde P(\tilde z) - \tilde z| > 0$. Take $\epsilon(\gamma) > 0$ such that $|\widetilde P(\tilde z) - \tilde z| > \epsilon(\gamma) > 0$. Then it follows that for all $z \in \kappa_0(U)$, 
    $$
    |\widetilde P(z) - z| < \epsilon(\gamma) \Longrightarrow |z| < \gamma.
    $$
\end{proof}

We finish this section by giving a description of the image of the subsets $\kappa_0(U_{\textbf k})$, where $U_{\textbf k}$ ($\textbf k = 0,1$) are defined by Equation \eqref{eqn: partition of U based on how many boundary hitting points}, under the map $\widetilde P(z) - z$ and when the closed trajectory $\varphi^{t_{\per}}(x_0) = x_0$ has exactly one glancing point.  

\begin{lemm}
    \label{lemma: image of missing or hitting subset under returm map minus identity}
    Let $m \in \mathbb N$ and $U$ be defined by Lemma \ref{lemma: Poinc return map for closed traj with m glancing pts} and $R > 0$. Suppose that the assumptions in Equation \eqref{eqn: curvature assumption} hold, $m=1$, and $U$ is made small as in Lemma \ref{lemma: injectivity for one glancing}. Then there exist a constant $C>0$ and a small constant $\epsilon_0 > 0$ such that for any $0 < \epsilon < \epsilon_0$ and any $\textbf k \in \{ 0, 1 \}$ we have that 
    \begin{equation}
        \label{eqn: swandwiching of image of missing or hitting subset under return map minus ident}
        M_{\textbf k} (\mathbb R \times (C \epsilon^2,\infty))\cap B_{\epsilon R}(0) \; \subset \; A_{\textbf k, \epsilon} \; \subset \; M_{\textbf k} (\mathbb R \times (-C \epsilon^2,\infty)) \cap B_{\epsilon R}(0),
    \end{equation}
    where $B_{\epsilon R}(0) :=\left\{ u \in \mathbb R^2: |u| < \epsilon R \right\}$, $A_{\textbf k, \epsilon} := \left\{\widetilde P(z) - z: z \in \kappa_0(U_{\textbf k}) \right\} \cap B_{\epsilon R}(0)$, and $M_{\textbf k} \in \SO(2)$ is a rotation matrix.
    \begin{comment}
    a smooth function $f: \mathbb R^2 \rightarrow \mathbb R$ and a small constant $\epsilon_0 > 0$ such that  
    $$
    \left\{u \in \mathbb R^2: f(u) = 0 \right\} \cap B_{\epsilon_0 R}(0) = \left\{\widetilde P(z) - z: z \in \kappa_0(U) \cap \{ \eta_2 = 0 \} \right\} \cap B_{\epsilon_0 R}(0). 
    $$
    Moreover, for any $0 < \epsilon < \epsilon_0$ and any $\textbf k \in \{0,1\}$, we have
    $$
    \begin{aligned}
        &A_{\textbf k, \epsilon} := \left\{u \in \mathbb R^2: (-1)^{\textbf k}f(u) < 0 \right\} \cap B_{\epsilon R}(0) = \left\{\widetilde P(z) - z: z \in \kappa_0(U_{\textbf k}) \right\} \cap B_{\epsilon R}(0),\\
        &A_{0, \epsilon} \cup A_{1, \epsilon} = B_{\epsilon R}(0) \setminus \{ f =0 \}, \quad \text{and}\\
        &M (\mathbb R \times (C \epsilon^2,\infty))\cap B_{\epsilon R}(0) \; \subset \; A_{\textbf k, \epsilon} \; \subset \; M (\mathbb R \times (-C \epsilon^2,\infty)) \cap B_{\epsilon R}(0), 
    \end{aligned}
    $$
    \end{comment}
\end{lemm}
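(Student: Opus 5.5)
We work in the coordinates $z=(y_2,\eta_2)$ on $\kappa_0(U)$, with $\widetilde P=\widetilde\Phi_0\circ\widetilde R$, and set $F(z)=\widetilde P(z)-z$. The plan is to show that the image of the interface $\{\eta_2=0\}$ under $F$ is a $C^1$ curve through $0$ whose tangent line there is fixed. The images $A_{\mathbf k,\epsilon}$ then lie on the two sides of that curve. Inside $B_{\epsilon R}(0)$ the curve deviates from its tangent line by $O(\epsilon^2)$, which produces the two-sided sandwich with rotations $M_{\mathbf k}$.

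\textbf{Step 1: the interface curve.} On $\{\eta_2=0\}$ we have $\widetilde R=\mathrm{Id}$, so $F(y_2,0)=\widetilde\Phi_0(y_2,0)-(y_2,0)=:g(y_2)$, and $g$ is smooth. Its derivative is $g'(0)=(d\widetilde\Phi_0(0)-I)e_1$. This vector is nonzero, because $d\widetilde\Phi_0(0)-I$ is invertible by Lemma \ref{lemma: no tang vec mapped to tang vec}. Set $\Gamma=g(\{|y_2|<\rho\})$, and let $\ell$ be the line spanned by $g'(0)$. Taylor expansion gives that $\Gamma\cap B_{\epsilon R}(0)$ lies within distance $C\epsilon^2$ of $\ell$. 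Here we use Corollary \ref{corr: almost closed implies close to z_0} together with the invertibility of $d\widetilde\Phi_0(0)-I$: any point $z$ with $|F(z)|<\epsilon R$ satisfies $|z|\lesssim\epsilon$, at least on $\overline{\kappa_0(U_0)}$, where $F=\widetilde\Phi_0-I$ is a local diffeomorphism. Choose $M_{\mathbf k}\in SO(2)$ mapping the $y$-axis $\mathbb R\times\{0\}$ onto $\ell$, oriented so that $M_{\mathbf k}(\mathbb R\times(0,\infty))$ is the side into which $F(\kappa_0(U_{\mathbf k}))$ points.

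\textbf{Step 2: the side of $\ell$ for each piece.} For $\mathbf k=0$, $F=\widetilde\Phi_0-I$ on $\{\eta_2\le0\}$ is a diffeomorphism near $0$ (Step 1 of Lemma \ref{lemma: injectivity for one glancing}). It therefore maps the half-disc $\{\eta_2<0\}$ onto one side of $\Gamma$, and this side contains, up to the $O(\epsilon^2)$ strip, the corresponding half-ball of $B_{\epsilon R}$. For $\mathbf k=1$ we use the continuity of $F$ on $\overline{\kappa_0(U_1)}$ and its injectivity there (Lemma \ref{lemma: injectivity for one glancing}). Invariance of domain makes $F(\kappa_0(U_1))$ an open set whose boundary near $0$ is contained in $\Gamma$, since the outer boundary of $U$ is sent far away by Corollary \ref{corr: almost closed implies close to z_0}. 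Consequently $F(\kappa_0(U_1))\cap B_{\epsilon R}$ is a union of components of $B_{\epsilon R}\setminus\Gamma$. To identify which side, compute the leading behaviour along $z=(0,s)$ with $s\downarrow0$:
\[
F(0,s)=\widetilde\Phi_0(2\sqrt s,s)-(0,s)=2\sqrt s\,d\widetilde\Phi_0(0)e_1+O(s).
\]
The vector $d\widetilde\Phi_0(0)e_1$ is transverse to $\ell$. Indeed, if $d\widetilde\Phi_0(0)e_1$ were parallel to $(d\widetilde\Phi_0(0)-I)e_1$, then $e_1$ would be an eigenvector of $d\widetilde\Phi_0(0)$, which contradicts $\langle d\widetilde\Phi_0(0)e_1,e_2\rangle\ne0$. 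So $F(0,s)$ leaves $\Gamma$ strictly, and we must check that it lands on the side not covered by the $\mathbf k=0$ piece.

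\textbf{Step 3 (main obstacle): showing $F(\kappa_0(U_1))$ fills its whole side of $\Gamma$ in $B_{\epsilon R}$, with no holes.} I expect this to be the hardest part. The plan is a degree and boundary argument. Take a small closed region $D\subset\overline{\kappa_0(U_1)}$ bounded by the segment of $\{\eta_2=0\}$ together with an arc at distance $\sim c$ from $0$. By Corollary \ref{corr: almost closed implies close to z_0}, $F$ maps that arc outside $B_{\epsilon R}$ once $\epsilon_0$ is small. Since $F$ is injective and continuous on $D$, $F(\partial D)$ is a Jordan curve. Inside $B_{\epsilon R}$ this curve coincides with $\Gamma$, so $F(D)$ is the closed region it bounds, and that region contains the side of $\Gamma$ identified in Step 2.

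The complement inclusion, namely that $A_{1,\epsilon}$ does not reach the other side beyond the $C\epsilon^2$ strip, follows from disjointness: $F(\kappa_0(U_0))$ and $F(\kappa_0(U_1))$ are disjoint open sets. This disjointness is exactly the point that needs care. I would derive it by combining the two injectivity statements with the sign of the $\sqrt{\eta_2}$ term in the expansion above. If that combination does not yield disjointness directly, the fallback is to argue that near $\Gamma$ the map $F$ on each piece is a one-sided homeomorphism onto a neighborhood, which forces the two images onto opposite sides.
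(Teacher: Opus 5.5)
\textbf{There is a genuine gap, in how you close the case $\mathbf k=1$.} The strip estimate for $\Gamma=F(\{\eta_2=0\})$, the case $\mathbf k=0$, and your invariance-of-domain observation that $A_{1,\epsilon}$ avoids $\Gamma$ and is a union of components of $B_{\epsilon R}(0)\setminus\Gamma$ are all sound. The trouble starts in Step~2, where you set out to show that $F(0,s)$ lands on the side ``not covered by the $\mathbf k=0$ piece''. For the outer inclusion you then invoke disjointness of $F(\kappa_0(U_0))$ and $F(\kappa_0(U_1))$.

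That disjointness does not follow from Lemma~\ref{lemma: injectivity for one glancing}, which gives injectivity on each closure separately and not on $\kappa_0(U)$. In this setting it is in fact false. Take the linear model $\widetilde\Phi_0=A=(a_{ij})$ and $\lambda(u)=\det((A-I)e_1,u)$. Then $\lambda(F(z))=\eta_2\det(A-I)$ on $\{\eta_2<0\}$, while $\lambda(F(0,s))=-2\sqrt{s}\,a_{21}+s\det(A-I)$. So the two images lie on opposite sides exactly when $a_{21}$ and $\det(A-I)$ have opposite signs. Equivalently, this happens exactly when $\det(d\widetilde P-I)$ keeps its sign across $\{\eta_2=0\}$; that determinant equals $\det(A-I)$ on $U_0$ and is approximately $-a_{21}/\sqrt{\eta_2}$ on $U_1$.

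For dispersing billiards the sign flips. A dispersing reflection acts on perpendicular Jacobi data by $(j,\dot j)\mapsto-(j,\dot j+cj)$ with $c>0$, and geodesic segments with $K\le 0$ preserve the positive quadrant. Hence the extra near-grazing reflection reverses the sign of the eigenvalues of the linearized return map. So $F$ folds along the interface and both images lie on the same side of $\Gamma$. Indeed the paper's proof, after a without-loss-of-generality choice, ends with $A_{1,\epsilon}=X_{0,\epsilon}$, which is the same set as $A_{0,\epsilon}$.

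\textbf{The fix is to stop trying to locate the side.} The lemma only asks for some rotation $M_1$. What you need is that $A_{1,\epsilon}$ is \emph{exactly one} component of $B_{\epsilon R}(0)\setminus\Gamma$. Each component is sandwiched between the two half-discs by your $O(\epsilon^2)$ strip estimate, and you choose $M_1$ accordingly.
\begin{itemize}
\item Nonemptiness follows from $F(0,s)\to 0$.
\item Covering both components is impossible. For a small fixed $r>0$, $F$ is a homeomorphism of the compact half-disc $\overline{\kappa_0(U_1)}\cap\overline{B_r(0)}$ onto its image. By invariance of domain, no point of the edge $\{\eta_2=0\}$ is sent to an interior point of that image. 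But every point of $\Gamma\cap B_{\epsilon R}(0)$ would be interior if both components were covered.
\item Your Jordan-curve argument in Step~3 also gives this directly. Since $F(\partial D)\cap B_{\epsilon R}(0)=\Gamma\cap B_{\epsilon R}(0)$, and a Jordan curve is the common boundary of its inside and outside, exactly one of the two components lies inside.
\end{itemize}

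This is essentially the paper's route, with $f=\widetilde\Phi_0-I$. The paper gets avoidance of $\Gamma$ from injectivity on the closure. It confines $A_{1,\epsilon}$ to a single $X_{\mathbf k,\epsilon}=\{(-1)^{\mathbf k}\,\eta_2\circ f^{-1}<0\}\cap B_{\epsilon R}(0)$ by an intermediate-value argument along segments. It then shows $A_{1,\epsilon}$ fills that set by an open--closed continuation argument, using $\det(d\widetilde P-I)\neq 0$ on $U_1$ and compactness. At no point does it need to decide which side that set is.
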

\begin{proof}
    Consider the function $f: \kappa_0(U) \rightarrow \mathbb R^2$ given by $f(z) = \widetilde \Phi_0(z) - z$, where $\widetilde \Phi_0$ is defined by Equation \eqref{eqn: Poinc return map in sympl coord}. Lemma \ref{lemma: no tang vec mapped to tang vec} says that $d \widetilde \Phi_0(0) N_0 - I$ is invertible, where $N_0$ is the identity matrix according to Equation \eqref{eqn: matrices in expansion of det of Poinc return map minus ident}. So, we get that $df(0)$ is also invertible; and the inverse function theorem implies that there exists $\gamma > 0$ and an open neighborhood $V \subset \mathbb R^2$ of $0$ such that 
    $$
    f: B_{\gamma}(0) \rightarrow V \quad \text{is a diffeomorphism.}
    $$

    Take $\epsilon_0 > 0$ small such that $\overline{B_{\epsilon_0 R}(0)} \subset V$, where $\overline{B_{\epsilon_0 R}(0)}$ is the closure of the open ball $B_{\epsilon_0 R}(0)$. Furthermore, by making $\epsilon_0 > 0$ small again, Corollary \ref{corr: almost closed implies close to z_0} ensures that 
    $$
    \left\{z \in \kappa_0(U): | \widetilde P(z)-z | \leq \epsilon_0 R \right\} \subset B_{\gamma}(0).
    $$
    
    Because $f$ is a diffeomorphism, we deduce that there exists a constant $L$ such that for all $u \in B_{\epsilon_0 R}(0)$ we have 
    $$
    |f^{-1}(u)| \leq L |u|.
    $$

    Let $(y_2, \eta_2)$ be the coordinates for $\kappa_0(U)$ given by Lemma \ref{lemma: relating billiard flow with flow from fried model} and $0 < \epsilon < \epsilon_0$. 
    Notice that $\left\{ u \in B_{\epsilon R}(0): \eta_2 \circ f^{-1}(u) = 0 \right\} \subset \left\{ f(y_2,0): y_2 \in \mathbb R \quad \text{and} \quad |y_2| < \epsilon R L \right\}$. So because for any $(y_2,0) \in B_{\gamma}(0)$, we have $f(y_2,0) = y_2 \, df(0)(\partial_{y_2}) + O(|y_2|^2)$, we deduce that there exists a constant $C> 0$ and a rotation $M \in \SO(2)$ such that 
    \begin{equation*}
        \label{eqn: glancing set mapped inside a small strip}
        \left\{ u \in B_{\epsilon R}(0): \eta_2 \circ f^{-1}(u) = 0 \right\} \subset M (\mathbb R \times (-C \epsilon^2, C \epsilon^2)) \cap B_{\epsilon R}(0).
    \end{equation*}
    
    For each $\textbf k \in \{0, 1 \}$, define the set 
    $$
    X_{\textbf k, \epsilon} = \left\{u \in B_{\epsilon R}(0): (-1)^{\textbf k} (\eta_2 \circ f^{-1})(u) < 0\right\},
    $$
    which is illustrated in Figure \ref{fig: half space under return map minus identity}. Notice that there exists a rotation $M_{\textbf k} \in \SO(2)$ such that 
    \begin{equation}
        \label{eqn: each half space from slice by image of glancing is close to half disk}
        M_{\textbf k} (\mathbb R \times (C \epsilon^2, \infty)) \cap B_{\epsilon R}(0) \; \subset \; X_{\textbf k, \epsilon} \; \subset \; M_{\textbf k} (\mathbb R \times (-C \epsilon^2, \infty)) \cap B_{\epsilon R}(0).
    \end{equation}

    \begin{figure}[ht]
        \centering
        \includegraphics[width=0.95\linewidth]{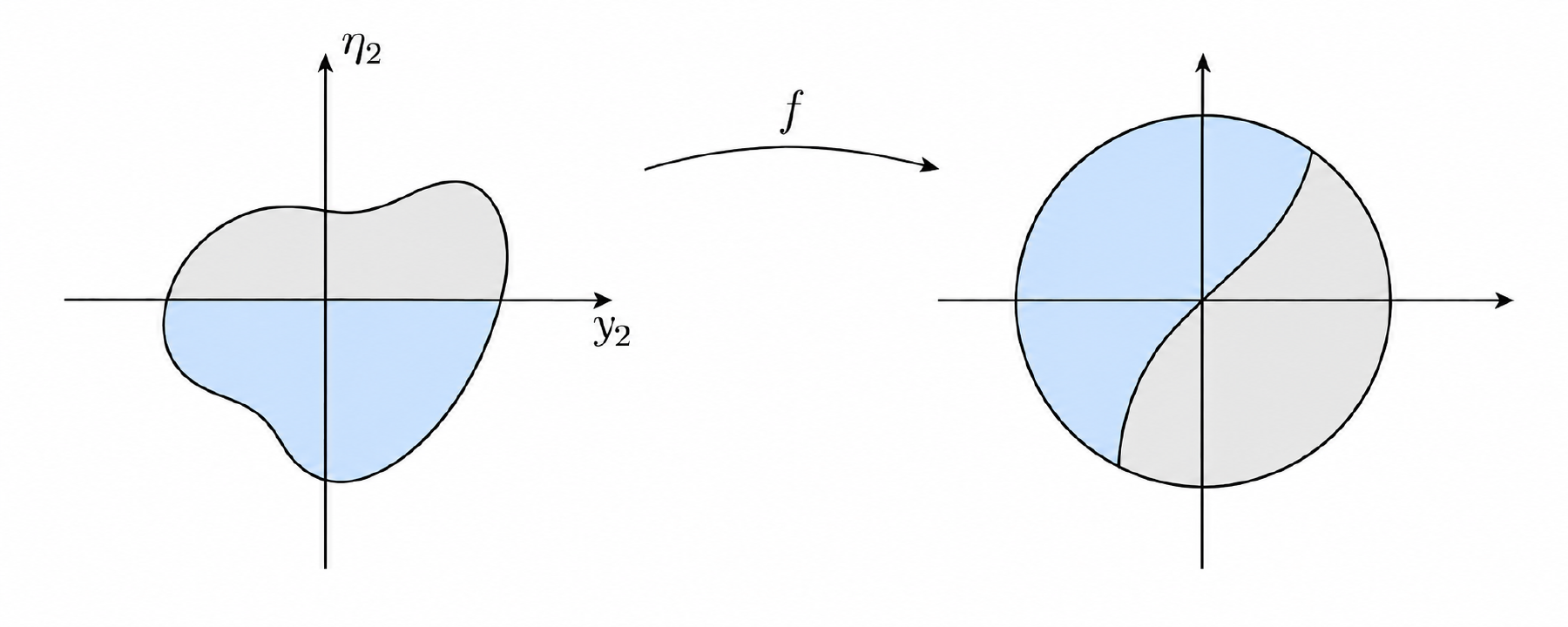}
        \caption{The light blue and light gray regions on the figure on the right represent the sets $X_{0,\epsilon}$ and $X_{1,\epsilon}$ respectively.}
        \label{fig: half space under return map minus identity}
    \end{figure}

    See that when $\textbf k = 0$, we have that $\widetilde P(z) - z = f(z)$ for any $z \in \kappa_0(U_0) \cap B_{\gamma}(0)$, so one can check that $A_{0,\epsilon} = X_{0,\epsilon}$. Then Equation \eqref{eqn: each half space from slice by image of glancing is close to half disk} finishes the proof in the case when $\textbf k = 0$. 
 
    Next we show Equation \eqref{eqn: swandwiching of image of missing or hitting subset under return map minus ident} in the case when $\textbf k = 1$. Lemma \ref{lemma: injectivity for one glancing} says that the map $\widetilde P(z) - z$ restricted on $\overline{\kappa_0(U_1)}$ is injective, so we deduce that the image of $\kappa_0(U_1)$ under the map $\widetilde P(z) - z$ does not interect the set $\left\{ u \in B_{\epsilon R}(0): \eta_2 \circ f^{-1}(u) = 0 \right\}$. 
    
    Now suppose that there exist $\tilde z, z' \in \kappa_0(U_1)$ such that 
    $$
    \widetilde P(\tilde z) - \tilde z \in X_{0, \epsilon} \quad \text{and} \quad \widetilde P(z') - z' \in X_{1,\epsilon}.
    $$
    Define $z(t) = tz' + (1-t) \tilde z \in \kappa_0(U_1)$. It follows that there exists $t_0 \in (0,1)$ such that 
    $$
    \eta_2\circ f^{-1}(\widetilde P(z(t_0)) - z(t_0)) = 0
    $$
    because $\widetilde P(z) - z$ is continuous and 
    $$
    \eta_2\circ f^{-1}( \widetilde P(z(0)) - z(0) ) < 0 \quad \text{and} \quad \eta_2\circ f^{-1}( \widetilde P(z(1)) - z(1) ) > 0
    $$
    by definition. Since for any $z \in \kappa_0(U) \cap \{ \eta_2 = 0 \}$, the map $\widetilde P(z) - z$ is given by $f(z)$, we deduce that $z(t_0) \in \kappa_0(U) \cap \{ \eta_2 = 0 \}$ because Lemma \ref{lemma: injectivity for one glancing} says that $\widetilde P(z) - z$ is injective on the set $\overline{\kappa_0(U_0)}$ or $\overline{\kappa_0(U_1)}$. Since $z(t_0) \in \kappa_0(U_1) = \kappa_0(U) \cap \{ \eta_2 > 0 \}$, we get a contradiction.  

    So without loss of generality we can assume that 
    $$
    \left\{ \widetilde P(z) - z: z \in \kappa_0(U_1) \right \} \cap B_{\epsilon R}(0) \subset X_{0,\epsilon}.
    $$
    Indeed this is the case because from the definition of $X_{0,\epsilon}$ and $X_{1,\epsilon}$, one can check that
    $$
    B_{\epsilon R}(0) = X_{0,\epsilon} \cup X_{1,\epsilon} \cup \left\{ u \in B_{\epsilon R}(0): \eta_2 \circ f^{-1}(u) = 0\right\}.
    $$

    Next we are going to show that 
    $$
    X_{0,\epsilon} \subset \left\{ \widetilde P(z) - z: z \in \kappa_0(U_1) \right \} \cap B_{\epsilon R}(0). 
    $$
    First from the continuity of $\widetilde P(z) - z$, we deduce that there exists $z_0 \in \kappa_0(U_1)$ such that $u_0 = \widetilde P(z_0) - z_0 \in B_{\epsilon R}(0)$ because $\widetilde P(0) = 0 \in B_{\epsilon R}(0)$. Consequently, we must have that $u_0 \in X_{0,\epsilon}$. Now, choose $u \in X_{0,\epsilon}$, and notice that for all $t \in [0,1]$
    $$
    u(t) = f\left(t f^{-1}(u) + (1-t)f^{-1}(u_0) \right)
    $$
    is a smooth curve in $X_{0,\epsilon}$ with $u(0) = u_0$ and $u(1) = u$. 

    Define 
    $$
    \Omega = \left\{ t \in [0,1]: u(t) \in \left\{ \widetilde P(z) - z: z \in \kappa_0(U_1) \right\} \right\} \quad \text{and} \quad t_1 = \sup \Omega.
    $$
    Then, we proceed with cases.

    \noindent
    \textbf{Case 1:} Suppose $t_1 = 1$. 
    
    It follows that there exist sequences $\{t_n\}_{n\in \mathbb N} \subset [0,1]$ and $z_n \in \kappa_0(U_1) \cap B_{\gamma}(0)$ such that $t_n \rightarrow t_1$ and $\widetilde P(z_n) - z_n = u(t_n)$. Because the set $\overline{ \kappa_0(U_1) \cap B_{\gamma}(0)}$ is compact, there exists $z_{\infty} \in \overline{ \kappa_0(U_1) \cap B_{\gamma}(0)}$ such that after taking a subsequence, we have that $z_n \rightarrow z_{\infty}$. Consequently, we get that 
    $$
    \widetilde P(z_{\infty}) - z_{\infty} = \lim_{n \rightarrow \infty} \widetilde P(z_n) - z_n = \lim_{n \rightarrow \infty} u(t_n) = u(t_1) = u.
    $$ 
    Because $u \in X_{0,\epsilon}$, it follows that $z_{\infty} \in \kappa_0(U_1)$. 

    \noindent
    \textbf{Case 2:} Suppose that $t_1 < 1$ and $t_1 \in \Omega$. 

    There exists $z_1 \in \kappa_0(U_1) \cap B_{\gamma}(0)$ such that $\widetilde P(z_1) - z_1 = u(t_1)$. Proposition \ref{prop: expans of det of Poinc ret map in coord minus ident with non-zero coeff} gives that $d\widetilde P(z_1) - I$ is invertible, so the inverse function theorem shows that $\widetilde P(z) - z$ is a local diffeomorphism at $z_1$. As a result, there exists a small constant $\sigma > 0$ such that $[t_1, t_1 + \sigma) \in \Omega$. We get a contradiction since $t_1 = \sup \Omega$. 

    \noindent
    \textbf{Case 3:} Suppose that $t_1 < 0$ and $t_1 \notin \Omega$. 

    By definition, there exist sequences $\{ t_n \}_{n \in \mathbb N} \subset [0,1]$ and $z_n \in \kappa_0(U_1) \cap B_{\gamma}(0)$ such that $t_n \rightarrow t_1$ and $\widetilde P(z_n) - z_n = u(t_n)$. After passing to a subsequence, we deduce that there exists $z_{\infty} \in \overline{\kappa_0(U_1) \cap B_{\gamma}(0)}$ such that $z_n \rightarrow z_{\infty}$. So, we get that 
    $$
    \widetilde P(z_{\infty}) - z_{\infty} = \lim_{n \rightarrow \infty} \widetilde P(z_n) - z_n = \lim_{n \rightarrow \infty} u(t_n) = u(t_1),
    $$
    which implies that $t_1 \in \Omega$. We obtain a contradiction since by assumption $t_1 \notin \Omega$. 

    Thus, we have shown that $X_{0,\epsilon} = \left\{ \widetilde P(z) - z: z \in \kappa_0(U_1) \right \} \cap B_{\epsilon R}(0) = A_{1,\epsilon}$ as desired. So Equation \eqref{eqn: each half space from slice by image of glancing is close to half disk} finishes the proof. 
\end{proof}

\section{Regularized Flat trace}
\label{sec: regularized flat trace}

Let $M = S \Sigma$ and $\widetilde M = S \widetilde \Sigma$, and notice that the billiard flow $\varphi^t: M \rightarrow M$ defined by Definition \eqref{def: sympl billiard flow for all positive times} is such that the pullback $(\varphi^t)^*$ has the following mapping properties: 
\begin{equation}
    \label{eqn: map prop of pullback by billiard flow}
    (\varphi^t)^*: C^{\infty}(M) \rightarrow \mathcal D' (M),
\end{equation}
where $C^{\infty}(M)$ is the space of smooth functions on $M$ and $\mathcal D'(M)$  the space of distributions on $M$. 

Suppose that we fix a metric $h$ on $\widetilde M$, which induces a distance function $d$ on $\widetilde M$. Let $\epsilon > 0$ and $n =  \dim \Sigma$, and define the smoothing operator
\begin{equation}
    \label{eqn: regularizer}
    \begin{split}
        &E_{\epsilon}: \mathcal D'(M) \rightarrow C_c^{\infty}(M^{\circ}) \quad \text{by}\\
        &\left(E_{\epsilon}u \right)(x) = \int_M K_{E_{\epsilon}}(x,y) u(y) dy =  \int_M \chi \left( \frac{d(x,y)}{\epsilon}\right) \left(1-w \right)\left( \frac{d\left(y,\partial M\right)}{4 \epsilon} \right) u(y)  \frac{dy}{\epsilon^{2n-1}},
     \end{split}
\end{equation}
where $M^{\circ}$ denotes the interior of $M$, $C^{\infty}_c(M^{\circ})$ is the space of smooth and compactly supported functions on $M^{\circ}$, $dy$ is the volume form $\dvol_h$, and $d(y, \partial M)$ is the distance of a point $y \in M$ to the boundary $\partial M$. The functions $\chi \,, w \in C_c^{\infty}([0,1), [0,\infty))$, where $C_c^{\infty}([0,1), [0,\infty))$ is the space of smooth and compactly supported functions in $[0,1)$ with values in $[0,\infty)$, have the properties that 
\begin{equation}
    \label{eqn: prop of regularizer}
    \begin{split}
        & \int_0^{\infty} \chi(r) r^{2n-2} dr = \frac{1}{\text{Vol}(\mathbb S^{2n-2})}, \quad w \leq 1,  \quad  \text{and} \quad w(r) = 1 \quad \text{for all} \quad r \in \left[0,\frac{1}{2}\right].\\
    \end{split}
\end{equation}
Here $\text{Vol}(\mathbb S^{2n-2})$ denotes the volume of the $2n-2$ dimensional sphere $\mathbb S^{2n-2}$. This choice of functions $\chi$ and $w$ satisfying properties in Equation \eqref{eqn: prop of regularizer} imply that for any $u \in C^{\infty}(M^{\circ})$, we have that 
\begin{equation}
    \label{eqn: regularizer approx a delta function}
    E_{\epsilon}u \rightarrow u \quad \text{in} \quad C^{\infty}(M^{\circ}) \quad \text{as} \quad \epsilon \rightarrow 0. 
\end{equation}
One can also see that
\begin{equation}
    \label{eqn: support away from bound after apply regularizer}
    \supp E_{\epsilon}u \subset \{y \in M: d(y,\partial M) > \epsilon \} \quad \text{for any} \quad u \in \mathcal D'(M).
\end{equation}

Using the smoothing operator $E_{\epsilon}$, we define a regularized pullback operator 
\begin{equation}
    \label{eqn: regularized pullback}
    \begin{split}
        &A_{\epsilon}(t) = E_{\epsilon} \left( \varphi^t \right)^* E_{\epsilon}: \mathcal D'(M) \rightarrow C^{\infty}_c(M^{\circ}),
    \end{split}
\end{equation}
which has Schwartz kernel 
\begin{equation}
    \label{eqn: kernel of regul pullback}
    \begin{split}
        &K_{A_{\epsilon}(t)}(x,y) = \int_M K_{E_{\epsilon}}(x,z) K_{E_{\epsilon}}(\varphi^t(z),y) dz,
    \end{split}
\end{equation}
where $K_{E_{\epsilon}}$ is defined by Equation \eqref{eqn: regularizer}.

\begin{lemm}
    \label{lemma: int of kernel of regularized pull back in x}
    Let $\rho \in C^{\infty}_c(M^{\circ})$. Then there exists $\epsilon_0 > 0$ and $R> 0$ such that for all $y,z \in M$ and all  $0 < \epsilon < \epsilon_0$ we have 
    \begin{equation}
    \label{eqn: int of kernel of regularized pull back in x}
        \begin{split}
             p(y,z,\epsilon) &:=\epsilon^{2n-1} \int_M \rho(x) K_{E_{\epsilon}}(x,y) K_{E_{\epsilon}}(z,x) dx \\ &= \begin{cases}
                \rho(y) \alpha_y(\epsilon^{-1} \exp_y^{-1}(z)) + O(\epsilon) & \text{if} \quad \| \epsilon^{-1} \exp^{-1}_y(z) \|_{h(y)} < R \\
                0 & \text{else}
            \end{cases} , \\
            \text{where} \quad &\alpha_y(w) = \int_{T_yM} \chi(\|v\|_{h(y)}) \chi(\|v- w \|_{h(y)}) dv \quad \text{for any} \quad w \in T_yM .
        \end{split}
    \end{equation}
    Moreover, we have that 
    \begin{equation}
        \label{eqn: psi int to 1}
        \alpha_y(w) = 0 \quad \text{for any} \quad w \in T_yM \quad \text{with} \quad \|w \|_{h(y)} \geq R \quad \text{and} \quad \int_{T_y M} \alpha_y(w) dw = 1.
    \end{equation}
\end{lemm}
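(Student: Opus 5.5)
The plan is to unfold the definition of $K_{E_\epsilon}$ from \eqref{eqn: regularizer} and rescale in normal coordinates. By definition,
\begin{equation*}
p(y,z,\epsilon)=\epsilon^{-(2n-1)}\int_M \rho(x)\,\chi\Big(\tfrac{d(x,y)}{\epsilon}\Big)\chi\Big(\tfrac{d(z,x)}{\epsilon}\Big)(1-w)\Big(\tfrac{d(y,\partial M)}{4\epsilon}\Big)(1-w)\Big(\tfrac{d(x,\partial M)}{4\epsilon}\Big)\,dx .
\end{equation*}
Since $\chi$ is supported in $[0,1)$, the integrand vanishes unless $d(x,y)<\epsilon$ and $d(x,z)<\epsilon$, hence unless $d(y,z)<2\epsilon$. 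I will take $R=2$ (or slightly more, to absorb the distortion between $d(y,z)$ and $\|\exp_y^{-1}z\|_{h(y)}$). For $\epsilon$ small this is bounded by the injectivity radius of $(\widetilde M,h)$. This already gives the ``else'' case. The support property of $\alpha_y$ in \eqref{eqn: psi int to 1} follows the same way: $\alpha_y(w)\ne0$ forces $\|v\|<1$ and $\|v-w\|<1$, so $\|w\|<2\le R$.

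Next I deal with the cutoffs. Because $\supp\rho$ is a compact subset of $M^\circ$, it has distance $\ge 2\delta>0$ from $\partial M$. If $\rho(x)\neq 0$ and $d(x,y)<\epsilon<\delta$, then $d(y,\partial M)>\delta$ and $d(x,\partial M)>\delta$, so both factors $(1-w)(\cdot/4\epsilon)$ equal $1$ once $4\epsilon<\delta$ (recall $w$ is supported in $[0,1)$). Likewise, if $y$ is at distance $\ge\epsilon$ from $\supp\rho$, then both $p$ and the main term $\rho(y)\alpha_y(\cdot)$ vanish. So it suffices to treat $y$ in a fixed compact neighbourhood $K\subset M^\circ$ of $\supp\rho$, where all geometric constants are uniform.

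For the main computation I substitute $x=\exp_y(\epsilon v)$ with $v\in T_yM$, $\|v\|<1$. Then $dx=\epsilon^{2n-1}(1+O(\epsilon|v|))\,dv$, which cancels the prefactor, and $d(x,y)/\epsilon=\|v\|_{h(y)}$ exactly. Write $w=\epsilon^{-1}\exp_y^{-1}(z)$, so $\|w\|<R$. The key step is the estimate
\begin{equation*}
d\big(\exp_y(\epsilon v),\exp_y(\epsilon w)\big)=\epsilon\|v-w\|_{h(y)}+O(\epsilon^2),
\end{equation*}
uniformly for $y\in K$ and $|v|,|w|\le R$. This comes from the smoothness of the map $(y,a,b)\mapsto \exp_y^{-1}$ composed with $\exp$: in normal coordinates at $y$ the metric is $h(y)+O(|x|^2)$, so distances agree with Euclidean ones up to relative error $O(\epsilon^2)$, and in particular up to $O(\epsilon^2)$ absolutely. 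Since $\chi$ is Lipschitz, $\chi(d(z,x)/\epsilon)=\chi(\|v-w\|)+O(\epsilon)$. In addition, $\rho(\exp_y(\epsilon v))=\rho(y)+O(\epsilon)$. The integration domain $\|v\|<1$ is bounded, so all these errors integrate to $O(\epsilon)$, and we obtain $p=\rho(y)\alpha_y(w)+O(\epsilon)$.

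The normalization $\int\alpha_y=1$ is Fubini:
\begin{equation*}
\int_{T_yM}\alpha_y(w)\,dw=\Big(\int_{T_yM}\chi(\|v\|)\,dv\Big)^2,\qquad \int_{T_yM}\chi(\|v\|)\,dv=\mathrm{Vol}(\mathbb S^{2n-2})\int_0^\infty\chi(r)r^{2n-2}\,dr=1,
\end{equation*}
using polar coordinates in dimension $2n-1$ and \eqref{eqn: prop of regularizer}. The only real obstacle I expect is making the distance expansion and the $O(\epsilon)$ error uniform in $y,z$. I will handle it by compactness of $K$ together with smooth dependence of normal coordinates on the base point. The remaining issue is the mismatch between $d(y,z)<2\epsilon$ and the condition $\|w\|<R$, which is resolved simply by choosing $R$ slightly larger than $2$.
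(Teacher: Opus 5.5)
Your proposal is correct and follows essentially the same route as the paper: drop the boundary cutoffs $(1-w)$ for small $\epsilon$, substitute $x=\exp_y(\epsilon v)$, and use the expansion of $d(\exp_y(\epsilon v),\exp_y(\epsilon w))$ together with the Lipschitz continuity of $\chi$ to get $\rho(y)\alpha_y(w)+O(\epsilon)$, then obtain $\int\alpha_y=1$ from Fubini and polar coordinates. Your explicit choice $R=2$ needs no enlargement, since $\|\exp_y^{-1}z\|_{h(y)}=d(y,z)$ within the injectivity radius, and your $O(\epsilon^2)$ distance error is weaker than the paper's $O(\epsilon^3)$ but still suffices for the $O(\epsilon)$ conclusion.
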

\begin{proof}
    From the definition of $K_{E_{\epsilon}}$ provided in Equation \eqref{eqn: regularizer}, we have that 
    $$
    \epsilon^{2n-1}\int_M \rho(x) K_{E_{\epsilon}}(x,y) K_{E_{\epsilon}}(z,x) dx = \int_M \rho(x) \chi\left(\frac{d(x,y)}{\epsilon}\right) \chi\left(\frac{d(z,x)}{\epsilon}\right) \frac{dx}{\epsilon^{2n-1}}
    $$
    by choosing $\epsilon_0 > 0$ small enough such that for all $0 < \epsilon < \epsilon_0$ we have $(1-w)\left( \frac{d\left(x, \partial M\right)}{4\epsilon} \right) = 1$ for all $x \in \supp \rho$ and $(1-w)\left( \frac{d\left(y, \partial M\right)}{4\epsilon} \right) = 1$ for all $y \in M$ such that $d(y,x) < \epsilon$ for some $x \in \supp \rho$.

    Making the change of variables $x = \exp_y(\epsilon v)$, where $\exp_y$ is the exponential map and $v \in T_y M$, we get that 
    \begin{equation*}
        \begin{split}
            &\epsilon^{2n-1}\int_M \rho(x) K_{E_{\epsilon}}(x,y) K_{E_{\epsilon}}(z,x) dx \\
            &= \int_{T_yM} \rho(\exp_y(\epsilon v)) \chi\left(\| v \|_{h(y)}\right) \chi\left(\| v-\epsilon^{-1} \exp_y^{-1}(z) \|_{h(y)} + O(\epsilon)\right) |\det(d\exp_y(\epsilon v))|dv.
        \end{split}
    \end{equation*}
    We can then pick $R > 0$ big enough such that for any $v \in T_y M$, we have that 
    $$
    \chi\left(\| v \|_{h(y)}\right) \chi\left(\| v-\epsilon^{-1} \exp_y^{-1}(z) \|_{h(y)} + O(\epsilon)\right) = 0 \quad \text{if} \quad \| \epsilon^{-1} \exp_y^{-1}(z) \|_{h(y)} \geq R.
    $$ 
    As a result, we get that 
    $$
    \epsilon^{2n-1}\int_M \rho(x) K_{E_{\epsilon}}(x,y) K_{E_{\epsilon}}(z,x) dx = 0 \quad \text{if} \quad \| \epsilon^{-1} \exp_y^{-1}(z) \|_{h(y)} \geq R.
    $$
    In the case when $\| \epsilon^{-1} \exp_y^{-1}(z) \|_{h(y)} < R$, we use the identity $$
    d\left(\exp_y(\epsilon v), \exp_y(\epsilon w)\right) = \epsilon \| v- w \|_h(y) + O(\epsilon^3) \quad \text{for any} \quad v,w \in T_yM
    $$
    and the fact that $d \exp_y(0)$ is the identity to show that 
    $$
    \begin{aligned}
        \epsilon^{2n-1}\int_M \rho(x) K_{E_{\epsilon}}(x,y) &K_{E_{\epsilon}}(z,x) dx  \\
        &= \rho(y) \int_{T_yM} \chi\left(\| v \|_{h(y)}\right) \chi\left(\| v-\epsilon^{-1} \exp_y^{-1}(z) \|_{h(y)}\right) dv + O(\epsilon).
    \end{aligned}
    $$

    Notice that if $R> 0$ is big enough, we can ensure that 
    $$
    \alpha_y(w) = 0 \quad \text{for any} \quad w \in T_yM \quad \text{with} \quad \| w\|_{h(y)} \geq R \,.
    $$
    Finaly, using Equation \eqref{eqn: prop of regularizer}, one can show that 
    $$
    \int_{T_yM} \alpha_y(w) dw = \left( \int_{T_y M} \chi(\| w \|_{h(y)}) dw \right)^2 = 1.
    $$
\end{proof}

Using a similar argument to the proof of Lemma B.1 in Appendix B of \cite{DZ16}, we show the following lemma: 
\begin{lemm}
    \label{lemma: local trace formula for regularized billiard flow on fns}
    Suppose that $x_0 \in F \cap \{\pi^*b = 0\} \cap \{H_f \pi^*b = 0 \}$ and $t_{\per} >0 $ are such that $\varphi^{t_{\per}}(x_0) = x_0$. Let $m \in \mathbb N$ be given by Lemma \ref{lemma: Poinc return map for closed traj with m glancing pts} and $ \tilde t \in (0, t_{\per})$ be such that $y_0 = \varphi^{\tilde t}(x_0) \in M^{\circ}$. If the assumptions in Equation \eqref{eqn: curvature assumption} hold and the closed trajectory $\varphi^{t_{\per}}(x_0) = x_0$ has exactly one glancing point, that is $m=1$, then there exists $\sigma> 0$ and an open neighborhood $W$ of $y_0$ in $M^{\circ}$ such that $e^{tH_f}(y_0) \in W$ for all $|t| < \sigma$ and 
    \begin{equation}
        \label{eqn: loc trace form for regu billiard flow on functions}
        \lim_{\epsilon \rightarrow 0^+}\int_{\mathbb R} \int_W f(t)\rho(x)K_{A_{\epsilon}(t)}(x,x)\, dx dt = \frac{ c_0 \, f(t_{\per}) }{2} \int_{-\sigma}^{\sigma} \rho \left( e^{tH_f}(y_0)\right)\, dt
    \end{equation}
    for any $f \in C_c^{\infty}\left((t_{\per}-\sigma, t_{\per} + \sigma)\right)$ and any $\rho \in C_c^{\infty}(W)$, where $c_0$ is defined by Corollary \ref{corr: limits diff of Poinc retur map minus id based on reflect pts}.
\end{lemm}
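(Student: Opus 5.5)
Following the approach of \cite[Lemma B.1]{DZ16}, we first rewrite the integral in flow-box coordinates. We take the Poincar\'e section $U\subset S_0^-$ through $y_0$ (after relabeling, $y_0=y_0^-$). We choose $W$ to be the image of $(-\sigma,\sigma)\times U$ under $(s,z)\mapsto e^{sH_f}(\kappa_0^{-1}(z))$, with $\sigma$ and $U$ small enough that this map is a diffeomorphism onto $W\subset M^\circ$. Its Jacobian at $s=0$ is smooth and nonvanishing; we normalize the section's measure so that $dx = J(s,z)\,ds\,dz$ with $J(0,0)=1$ for the metric $h$ (any discrepancy cancels in the final limit by the coordinate invariance in Corollary \ref{corr: limits diff of Poinc retur map minus id based on reflect pts}). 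By \eqref{eqn: kernel of regul pullback} and Fubini,
$$\int f(t)\rho(x)K_{A_\epsilon(t)}(x,x)\,dx\,dt=\epsilon^{-(2n-1)}\int\!\!\int f(t)\,p(\varphi^t(y),y,\epsilon)\,dy\,dt$$
for $\epsilon$ small, where $p$ is as in Lemma \ref{lemma: int of kernel of regularized pull back in x}. The cutoff $(1-w)$ factors equal $1$ near $\supp\rho$ and can be dropped. Lemma \ref{lemma: int of kernel of regularized pull back in x} then localizes the integrand to $d(\varphi^t(y),y)<R\epsilon$, replacing it by $\rho(y)\alpha_y(\epsilon^{-1}\exp_y^{-1}\varphi^t(y))+O(\epsilon)$.

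Next we parametrize. For $y=e^{sH_f}\kappa_0^{-1}(z)$ and $t$ near $t_{\per}$, Lemma \ref{lemma: Poinc return map for closed traj with m glancing pts} gives
$$\varphi^t(y)=e^{(s+t-t_{\per}+T(z))H_f}\kappa_0^{-1}(\widetilde P(z)).$$
We substitute $\tau = t-t_{\per}+T(z)$ for $t$ and write $u=\widetilde P(z)-z\in\mathbb R^2$. The displacement $\epsilon^{-1}\exp_y^{-1}\varphi^t(y)$ is then, up to $O(\epsilon)$ errors, $\epsilon^{-1}L_{s,z}(\tau,u)$, where $L_{s,z}$ is the linear isomorphism given by the differential of the flow-box map. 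Since the integrand forces $|\tau|+|u|\lesssim\epsilon$, Corollary \ref{corr: almost closed implies close to z_0} gives $z\to0$ and $T(z)\to0$. The continuity of $T$ and of $\widetilde P$ then lets us replace $f(t)$ by $f(t_{\per})$ and $\rho(y)$ by $\rho(e^{sH_f}y_0)$ with vanishing error.

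The key step is the change of variables $z\mapsto u$ on each of $\kappa_0(U_0)$ and $\kappa_0(U_1)$ separately. The complement $\{\eta_2=0\}$ has measure zero by \eqref{eqn: the sets U_k form a partition of U up to a set of meas 0}. On each piece, Lemma \ref{lemma: injectivity for one glancing} and Proposition \ref{prop: expans of det of Poinc ret map in coord minus ident with non-zero coeff} make $z\mapsto u$ a diffeomorphism onto $A_{\mathbf k,\epsilon}$, with $dz=|\det(d\widetilde P-I)|^{-1}du$. After rescaling $(\tau,u)=\epsilon(\tau',u')$, the $\epsilon^{2n-1}=\epsilon^3$ factor cancels the prefactor. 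The contribution of $U_{\mathbf k}$ becomes
$$f(t_{\per})\int\rho(e^{sH_f}y_0)\int_{\epsilon^{-1}A_{\mathbf k,\epsilon}}\int_{\mathbb R}\alpha(L(\tau',u'))\,|\det(d\widetilde P-I)|^{-1}(z(\epsilon u'))\,d\tau'\,du'\,ds.$$
By Lemma \ref{lemma: image of missing or hitting subset under returm map minus identity}, $\epsilon^{-1}A_{\mathbf k,\epsilon}$ is squeezed between half-discs $M_{\mathbf k}(\mathbb R\times(\pm C\epsilon,\infty))\cap B_R$, so it converges in measure to a half-plane. The factor $|\det(d\widetilde P-I)|^{-1}$ is uniformly bounded on $U_0$ by \eqref{eqn: det of Poinc minus ident when miss bound near glanc set is non-zero and bounded} and converges to $c_{\mathbf k}$ by Corollary \ref{corr: limits diff of Poinc retur map minus id based on reflect pts}. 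On $U_1$, however, it blows up like $\sqrt{\eta_2}$ only in the good direction, i.e.\ it tends to $0$. Dominated convergence therefore gives the $U_1$ contribution $\to0$. The $U_0$ contribution tends to $c_0$ times the integral of $\alpha\circ L$ over a half-space times $\mathbb R$.

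To finish, we need the half-space integral to equal exactly $\tfrac12$, and this is the main obstacle. Since $\alpha_y(w)$ depends only on $\|w\|_{h(y)}$ (it is a convolution of radial functions), it is even, so its integral over any half-space through the origin equals half its total. By \eqref{eqn: psi int to 1}, the full integral of $\alpha\circ L$ over $\mathbb R^3$ gives $|\det L|^{-1}$, which equals $1$ at $(0,0)$ with our normalization. A delicate point is that $L$ mixes $\tau$ and $u$ linearly, so the region $\mathbb R\times(\text{half-plane})$ maps to a half-space through $0$ under $L$; evenness still applies. We also need to justify the $O(\epsilon)$ errors in the exponential map uniformly. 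Combining these gives $\tfrac{c_0 f(t_{\per})}{2}\int_{-\sigma}^{\sigma}\rho(e^{sH_f}y_0)\,ds$.
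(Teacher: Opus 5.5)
Your proposal is correct and follows the paper's proof essentially step for step. The shared ingredients are: flow-box coordinates over the Poincar\'e section; the change of variables $\epsilon(\tau,u')=(t-t_{\per}+\widetilde T(\xi'),\,\widetilde P(\xi')-\xi')$, done separately on $\kappa_0(U_0)$ and $\kappa_0(U_1)$ using Lemma~\ref{lemma: injectivity for one glancing} (Jacobian $|\det(d\widetilde P-I)|^{-1}$); localization via Corollary~\ref{corr: almost closed implies close to z_0}; the half-plane squeezing of Lemma~\ref{lemma: image of missing or hitting subset under returm map minus identity}; the symmetry of $\alpha_y$ to produce the factor $\tfrac12$; and $c_1=0$.

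The remaining issues are bookkeeping only. First, no normalization of the section measure is needed, and the coordinate invariance in Corollary~\ref{corr: limits diff of Poinc retur map minus id based on reflect pts} plays no role there. The flow-box Jacobian $J(s,z)$ is exactly $|\det L_{s,z}|$, so it cancels the factor $|\det L|^{-1}$ coming from $\int\alpha\circ L$ for every $s$, not just at $s=0$; you dropped $J$ from your displayed integral. Second, for a general $y_0$ your ``relabeling'' is really conjugation by the transversal-reflection Poincar\'e map $Q\colon U\to U_{y_0}$, which the paper writes out explicitly.
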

\begin{proof}
     We break down the proof in multiple steps. 

     \noindent 
     \textbf{Step 1: } A coordinate adapted to the billiard flow and construction of $W$.
     \newline
     Let $U \subset M^{\circ}$ be the Poincar\'e section transversal to the billiard flow $\varphi^t$ given by Lemma \ref{lemma: Poinc return map for closed traj with m glancing pts} and suppose $U_{y_0} \subset M^{\circ}$ is a Poincar\'e section through $y_0 \in M^{\circ}$. After making the Poincar\'e sections $U$ and $U_{y_0}$ small if needed, Lemma \ref{lemma: Poinc map with strict reflections between poinc secs} says that the Poincar\'e map $Q$ between $U$ and $U_{y_0}$ is a diffeomorphism; and without loss of generality we can assume that $Q: U \rightarrow U_{y_0}$ (otherwise $Q^{-1}: U \rightarrow U_{y_0}$). 
     
     Because the billiard flow $\varphi^t$ on $M^{\circ}$ is given by the Hamiltonian flow $e^{tH_f}$, it follows that the Hamilonian flow $e^{tH_f}$ is also transversal to the Poincar\'e section $U_{y_0}$. So, there exists $\sigma_0 > 0$ such that $V = \left\{ e^{tH_f}(y): y \in U_{y_0} \; \text{and} \;\; |t| < \sigma_0 \right\} \subset M^{\circ}$.  

    Let $\kappa_0$ be the coordinate for $T \widetilde \Sigma$ near $x_0$ given by Lemma \ref{lemma: relating billiard flow with flow from fried model} and $\delta_0 > 0$ be given by Equation \eqref{eqn: Poinc secs near glanc point}. Define the set 
    $$
    S = \{ \xi' = (\xi_2, \xi_3) \in \mathbb R^2: \kappa_0^{-1}(0,\xi_2, \delta_0, \xi_3) = y \quad \text{for some} \quad y \in U \}
    $$
    and the smooth coordinate function $\psi:V \rightarrow (-\sigma_0, \sigma_0 ) \times S \subset  \mathbb R^3$ by 
    \begin{equation}
    \label{eqn: def of coord adapted to flow}
        x = \psi^{-1}(\xi_1, \xi_2, \xi_3) =  e^{\xi_1 H_f}(Q \circ \kappa_0^{-1}(0,\xi_2,\delta_0,\xi_3)).
    \end{equation}
    We see that the coordinate function $\psi$ satisfies the following properties:
    \begin{itemize}
        \item $\psi(y_0) = 0$, and
        \item $\psi(U_{y_0}) = \left\{ \xi = (\xi_1, \xi') \in \mathbb R^3: \xi_1 = 0 \right\} \cap \psi(V)$.
    \end{itemize}

    Let $0 < \sigma < \frac{\sigma_0}{4}$ and $B_{\sigma}(0) = \{ \xi \in \mathbb R^3: | \xi | < \sigma \}$. If $\sigma> 0$ is small enough, then $\psi^{-1}(B_{\sigma}(0)) \subset V$ and $\varphi^{t_{\per}}(x) \in V$ for all $x \in \psi^{-1}(B_{\sigma}(0))$. Let 
    $$
    P: U \rightarrow U \quad \text{and} \quad P_{y_0}: U_{y_0} \rightarrow U_{y_0}
    $$
    be the Poincar\'e return maps corresponding to the Poincar\'e sections $U$ and $U_{y_0}$, respectively. Then one can see that 
    $$
    P_{y_0} = Q \circ P \circ Q^{-1}. 
    $$
    So Lemma \ref{lemma: Poinc return map for closed traj with m glancing pts} implies that
    $$
    P_{y_0} = Q \circ \varphi^{t_{\per} - T(\bullet)}(\bullet) \circ Q^{-1}, 
    $$
    where $T$ is continuous with $T \circ Q^{-1}(y_0) = 0$. Recalling that Lemma \ref{lemma: Poinc map with strict reflections between poinc secs} says $Q(\bullet) = \varphi^{R(\bullet)}(\bullet)$, for some smooth function $R: U \rightarrow \mathbb R_+$, we deduce that 
    $$
    P_{y_0}(y) = \varphi^{t_{\per} - T_{y_0}(y)}(y), \quad \text{where} \quad T_{y_0}(y) = T \circ Q^{-1}(y) - R \circ P \circ Q^{-1}(y) + R\circ Q^{-1}(y).
    $$
    We can then check that $T_{y_0}(y_0) = 0$. 
    
    We use the identity $P_{y_0}(y) = \varphi^{t_{\per}-T_{y_0}(y)}(y)$ and the fact that $P_{y_0} = Q \circ P \circ Q^{-1}$ to deduce that for any $(0,\xi') \in B_{\sigma}(0)$ we have
    $$
    \Phi^{t_{\per}}(0,\xi') := \psi \circ \varphi^{t_{\per}} \circ \psi^{-1}(0,\xi') = (\widetilde T(\xi'), \widetilde P(\xi')), 
    $$
    where  $\widetilde P$ is defined by Equation \eqref{eqn: Poinc return map in sympl coord} and $\widetilde T(\xi') = T_{y_0}(\psi^{-1}(0,\xi'))$ is continuous with $\widetilde T(0) = 0$.     
    
    By making $\sigma > 0$ small again if needed, we get for any $\xi = (\xi_1, \xi') \in B_{\sigma}(0)$ and any $|t| \leq \sigma$ that
    \begin{equation}
        \label{eqn: billiard flow in special coordinates}
        \Phi^{t_{\per}+t}(\xi) = \left(\psi \circ \varphi^{t_{\per}+t} \circ \psi^{-1} \right)(\xi) =   (\xi_1 + t + \widetilde T(\xi') , \widetilde P(\xi') ).
    \end{equation}
    Define $W = \psi^{-1}(B_{\sigma}(0)) \subset M^{\circ}$, and see that $W$ is a neighborhood of $y_0$ in $M^{\circ}$ such that $e^{tH_f}(y_0) \in W$ for all $|t| < \sigma$.

    \noindent
    \textbf{Step 2: } Computing a local regularized flat trace
    
     From the definition of $K_{A_{\epsilon}(t)}$ in Equation \eqref{eqn: kernel of regul pullback}, we have that 
    \begin{equation*}
        \begin{split}
            \int_{\mathbb R} \int_W f(t) \rho(x) K_{A_{\epsilon}(t)} (x,x) dx dt &=  \int_{\mathbb R}\int_W \int_M f(t)\rho(x)K_{E_{\epsilon}}(x,y) K_{E_{\epsilon}}(\varphi^t(y),x) dy dx dt. 
        \end{split}
    \end{equation*}
    See that $x \in \supp \rho \subset W$. So, if $\epsilon$ is small enough, the definition of $K_{E_{\epsilon}}$ provided in Equation \eqref{eqn: regularizer} implies that $K_{E_{\epsilon}}(x,y) = 0$ for any $x \in \supp \rho$ and any $y \notin W$. As a result, we make the change of variables $\psi(y) = \xi = (\xi_1, \xi')$ for $y \in W$ and obtain that 
    \begin{equation*}
        \begin{split}
            \int_{\mathbb R} \int_W &f(t) \rho(x) K_{A_{\epsilon}(t)} (x,x) dx dt \\
            &= \int_{\mathbb R}\int_{B_{\sigma}(0)} \int_W f(t)\rho(x)K_{E_{\epsilon}}(x,\psi^{-1}(\xi)) K_{E_{\epsilon}}(\varphi^t(\psi^{-1}(\xi)),x) dx |\det d\psi^{-1}(\xi)|d\xi  dt \\
            & =\int_{\mathbb R}\int_{B_{\sigma}(0)} f(t) p(\psi^{-1}(\xi),\varphi^t(\psi^{-1}(\xi)),\epsilon) |\det d\psi^{-1}(\xi)| \, \epsilon^{-3} d\xi  dt, 
        \end{split}
    \end{equation*}
    where the function $p$ is defined by Lemma \ref{lemma: int of kernel of regularized pull back in x}.
    
    \begin{comment}
        \begin{equation*}
        \begin{split}
            &\int_W \rho(x)K_{A_{\epsilon}(t)} (x,x) dx =  \int_{W_{\epsilon}} \int_W \rho(x)\chi \left( \frac{d(x,y)}{\epsilon}\right) \chi\left( \frac{d\left(\varphi^t(y),x\right)}{\epsilon}\right)  \frac{dx dy}{\epsilon^{4n-2}}\\
            &= \int_{W_{\epsilon}} \int_{\frac{1}{\epsilon} \exp_y^{-1}(W)} \left(\rho  \circ \exp_y\right)(\epsilon v)\chi(\|v\|_{h(y)}) \chi \left( \left\| \frac{1}{\epsilon} \exp_y^{-1}\left(\varphi^t(y)\right)-v \right\|_{h(y)} + O(\epsilon)\right)\\
            & \quad \quad \quad \quad \left| d \exp_y(\epsilon v) \right|dv \frac{dy}{\epsilon^{2n-1}},
        \end{split}
    \end{equation*}
    where in the last equality we used the change of coordinate $x = \exp_y(\epsilon v)$ (here $\exp$ is the exponential map) and the fact that $d\left(\exp_y(\epsilon v), \exp_y(\epsilon w)\right) = \epsilon \| v- w \|_h + O(\epsilon^3)$ for any $v,w \in T_yM$.
    \end{comment}
     
     Notice that Lemma \ref{lemma: int of kernel of regularized pull back in x} also gives that 
     $$
     \begin{aligned}
         &p(\psi^{-1}(\xi),\varphi^t(\psi^{-1}(\xi)),\epsilon) = \rho(\psi^{-1}(\xi)) \, \alpha_{\psi^{-1}(\xi)}\left(\epsilon^{-1} \exp^{-1}_{\psi^{-1}(\xi)}\left(\varphi^t(\psi^{-1}(\xi))\right)\right) + O(\epsilon)\\
         &\text{if} \quad \left\| \epsilon^{-1} \exp^{-1}_{\psi^{-1}(\xi)}\left(\varphi^t(\psi^{-1}(\xi))\right) \right\|_{h(\psi^{-1}(\xi))} < R
     \end{aligned}
     $$
     and that
     $$
     p(\psi^{-1}(\xi),\varphi^t(\psi^{-1}(\xi)),\epsilon) = 0 \quad \text{otherwise},
     $$
     where $\alpha_{\psi^{-1}(\xi)}$ and $R$ are given by Lemma \ref{lemma: int of kernel of regularized pull back in x}. Let ~$(t,\xi) \in (t_{\per}-\sigma, t_{\per}+\sigma) \times B_{\sigma}(0)$ and suppose that $\varphi^t(\psi^{-1}(\xi)) = \exp_{\psi^{-1}(\xi)}(\epsilon w)$, for some $w \in T_yM$. Then, we see that 
     $$
     \Phi^t(\xi) = \left(\psi \circ \varphi^t \circ \psi^{-1}\right)(\xi) = \left(\psi \circ \exp_{\psi^{-1}(\xi)} \right) (\epsilon w).
     $$
     An expansion in the parameter $\epsilon$ shows that $d\psi^{-1}(\xi)(\Phi^t(\xi) -\xi) = \epsilon w + O(\epsilon^2)$, so if $\epsilon$ is small enough we have that
     \begin{equation}
         \label{eqn: value of inner integral}
         \begin{split}
             &p(\psi^{-1}(\xi),\varphi^t(\psi^{-1}(\xi)),\epsilon) = \rho(\psi^{-1}(\xi)) \, \alpha_{\psi^{-1}(\xi)}\left(\epsilon^{-1} d\psi^{-1}(\xi)(\Phi^t(\xi) -\xi) + O(\epsilon)\right) + O(\epsilon)\\
             &\text{if} \; \left\| \epsilon^{-1} d\psi^{-1}(\xi)(\Phi^t(\xi) -\xi) \right\|_{h(\psi^{-1}(\xi))} < R \quad \text{and} \quad p(\psi^{-1}(\xi),\varphi^t(\psi^{-1}(\xi)),\epsilon) = 0 \; \text{otherwise}.
         \end{split}
     \end{equation} 
     
     See that Equation \eqref{eqn: billiard flow in special coordinates} implies that
     $$
     \Phi^t(\xi) - \xi = (t-t_{\per} + \widetilde T(\xi'), \widetilde P (\xi') - \xi' ).
     $$
     For any $\xi = (\xi_1, \xi') \in \mathbb R^3$, define the projection map $\pi_{\xi'}$ by $\pi_{\xi'}(\xi) = \xi'$. For each $\textbf k \in \{ 0, 1\}$, the definition of the set $U_{\textbf k}$ in Equation \eqref{eqn: partition of U based on how many boundary hitting points} and the definition of $\psi$ provided by Equation \eqref{eqn: def of coord adapted to flow} imply that
     \begin{equation}
         \label{eqn: notation for subsets missing or hitting boundary}
         \widehat W_{\textbf k} := \pi_{\xi'} \left( B_{\sigma}(0) \cap \psi(U_{\textbf k}) \right) = \{ \xi' = (\xi_2, \xi_3) \in \mathbb R^2: |\xi'| < \sigma \quad \text{and} \quad (-1)^{\textbf k} \xi_3 < 0 \}.
     \end{equation} 
     Then, by Lemma \eqref{lemma: injectivity for one glancing}, we have that for any $\textbf k \in \{ 0, 1\}$ the map $\widetilde P(\xi') -  \xi'$ when restricted to the set $\widehat W_{\textbf k}$ is injective.   
   
    So, we make the change of variables 
     $$
     \epsilon(\tau,u') = \Phi^t(\xi) -  \xi = (t-t_{\per}+\widetilde T(\xi'), \widetilde P(\xi') - \xi'),
     $$
    and for each $\textbf k \in \{0 , 1 \}$, we define the sets 
    \begin{equation}
        \label{eqn: notation subsets missing or hitting boundary after eps change of coord}
        \widehat W_{\epsilon, \textbf k} = \{ u' \in \mathbb R^{2}: \quad \epsilon u' =  \widetilde P(\xi') - \xi' \quad \text{for some} \quad \xi' \in \widehat W_{\textbf k} \}
    \end{equation}
    and set 
    $$
    \widehat W_{\epsilon} = \widehat W_{\epsilon, 0} \cup \widehat W_{\epsilon, 1}.
    $$
    As a result, with 
    \begin{equation}
        \label{eqn: norm of diff of psi on ball of radius sigma}
        C = \sup_{\xi \in \overline{B_{\sigma}(0)}} \, \sup_{w \in S_{\psi^{-1}(\xi)} M}| d\psi(\psi^{-1}(\xi)) w | < \infty,
    \end{equation}
    where $\overline{B_{\sigma}(0)}$ denotes the closure of $B_{\sigma}(0)$, we get that 
    \begin{equation*}
        \begin{split}
            &\int_{\mathbb R} \int_W f(t) \rho(x) K_{A_{\epsilon}(t)} (x,x) dx dt \\
            &= \int_{-\sigma}^{\sigma} \int_{\widehat W_{\epsilon} \cap \{|u'| < CR \}} \int_{-CR}^{CR} f(\epsilon \tau + t_{\per} - \widetilde T(\xi'(\epsilon u'))) \rho(\psi^{-1}(\xi_1, \xi'(\epsilon u'))) \\
            &\quad \quad \quad \quad \quad \quad \left[ \alpha_{\psi^{-1}(\xi_1, \xi'(\epsilon u'))}\left( d\psi^{-1}(\xi_1,\xi'(\epsilon u'))(\tau, u') + O(\epsilon)\right) + O(\epsilon) \right] \\
            &\quad \quad \quad \quad \quad \quad |\det d\psi^{-1}(\xi_1, \xi'(\epsilon u'))| |\det(d \widetilde P(\xi'(\epsilon u')) - I) |^{-1} d\tau du' d\xi_1,
        \end{split}
    \end{equation*}
    where the second equality follows from the change of variables $\epsilon (\tau, u') = \Phi^t(\xi) - \xi$ and Equation \eqref{eqn: value of inner integral}.
        
    Let $\delta > 0$. There exists $\gamma > 0$ such that for all $|\xi '| < \gamma$, $|s| \leq \gamma$, $|\xi_1| \leq \sigma$, and $(\tau,u') \in \mathbb R^3 \cap B_{CR}(0)$, we have that 
    \begin{equation}
        \label{eqn: delta error for small xi'}
        \begin{split}
            &|f(t_{\per} + s - \widetilde T(\xi')) - f(t_{\per})| \leq \delta, \quad|\rho(\psi^{-1}(\xi_1,\xi')) - \rho(\psi^{-1}(\xi_1,0))| \leq \delta, \\
            &|\alpha_{\psi^{-1}(\xi_1, \xi')}(d\psi^{-1}(\xi_1,\xi')(\tau,u') + O(\gamma) ) - \alpha_{\psi^{-1}(\xi_1,0)}(d\psi^{-1}(\xi_1,0)(\tau,u'))| \leq \delta ,\\
            &|\det(d\psi^{-1}(\xi_1,\xi')) - \det(d\psi^{-1}(\xi_1,0))| \leq \delta, \quad \text{and}\\
            &\left| \frac{1}{\det(d\widetilde P(\xi')-I)} - c_{\textbf k} \right| \leq \delta \quad \text{for} \quad \textbf k \in {0,1} \quad  \text{and}  \quad \psi^{-1}(0,\xi') \in U_{\textbf k},
        \end{split}
    \end{equation}
    where $c_{\textbf k}$ are defined by Corollary \ref{corr: limits diff of Poinc retur map minus id based on reflect pts}. With the change of variables $\epsilon u' = \widetilde P(\xi') - \xi'$, where $|u'| \leq CR$, Corollary \ref{corr: almost closed implies close to z_0} implies that if $\epsilon > 0$ is small enough, then we must have that $|\xi'(\epsilon u)| < \gamma$. As a result, Equation \eqref{eqn: delta error for small xi'} implies that
    \begin{equation*}
        \begin{split}
            &\int_{\mathbb R}\int_W f(t)\rho(x) K_{A_{\epsilon}(t)} (x,x) dx dt = \\
            & = f(t_{\per}) \sum_{\textbf k=0}^1 c_{\textbf k} \int_{-\sigma}^{\sigma} \int_{ \widehat W_{\epsilon, \textbf k} \cap \{ |u'| < CR \}} \int_{-CR}^{CR} \rho (\psi^{-1}(\xi_1, 0)) \\
            & \quad \quad \quad    \alpha_{\psi^{-1}(\xi_1,0)}(d\psi^{-1}(\xi_1, 0)(\tau, u'))  \left|\det( d \psi^{-1}(\xi_1, 0
            ))\right| d\tau du' d\xi_1 + O(\delta).
        \end{split}
    \end{equation*}

    For each $\textbf k \in \{ 0, 1 \}$, Lemma \ref{lemma: image of missing or hitting subset under returm map minus identity} and Equations \eqref{eqn: notation for subsets missing or hitting boundary} and \eqref{eqn: notation subsets missing or hitting boundary after eps change of coord} defining the sets $\widehat W_{\textbf k}$ and $\widehat W_{\epsilon, \textbf k}$, respectively, imply that 
    $$
    M_{\textbf k}(\mathbb R \times(\widetilde C \epsilon, \infty)) \cap \{ |u'| < CR \} \subset \widehat W_{\epsilon,\textbf k} \cap \{ |u'| < CR \} \subset M_{\textbf k}(\mathbb R \times(-\widetilde C \epsilon, \infty)) \cap \{ |u'| < CR \}
    $$
    for some rotation matrix $M_{\textbf k} \in \SO(2)$ and some constant $\widetilde C > 0$. Then, it follows that 
    \begin{equation*}
        \begin{split}
            &\int_{-\sigma}^{\sigma}\int_{\widehat W_{\epsilon, 0} \cap \{|u'| < CR \}} \int_{-CR}^{CR} \rho (\psi^{-1}(\xi_1,\xi' = 0)) \\
            & \quad \quad \quad    \alpha_{\psi^{-1}(\xi_1,0)}(d\psi^{-1}(\xi_1, 0)(\tau, u'))  \left|\det( d \psi^{-1}(\xi_1,0
            ))\right| d\tau du' d\xi_1\\
            &\quad \quad \quad =\int_{-\sigma}^{\sigma}\int_{Y_{\epsilon , \textbf k}} \int_{-CR}^{CR} \rho (\psi^{-1}(\xi_1,\xi' = 0)) \\
            & \quad \quad \quad \quad    \alpha_{\psi^{-1}(\xi_1,0)}(d\psi^{-1}(\xi_1, 0)(\tau, u'))  \left|\det( d \psi^{-1}(\xi_1,0
            ))\right| d\tau du' d\xi_1 + O(\epsilon),
        \end{split}
    \end{equation*}
    where $Y_{\epsilon, \textbf k} = M_{\textbf k}(\mathbb R \times [0, \infty)) \cap \{ |u'| < CR \}$. Notice that 
    $$
    Y_{\epsilon, \textbf k} \cup (-Y_{\epsilon, \textbf k}) = B_{CR}(0) \quad \text{and} \quad Y_{\epsilon, \textbf k} \cap (- Y_{\epsilon, \textbf k}) = M_{\textbf k} (\mathbb R \times \{ 0\}) \cap \{ |u'| < CR \}.
    $$
    Additionally, for any $|\xi_1| < \sigma$, Equation \eqref{eqn: norm of diff of psi on ball of radius sigma} implies that the set
    $$
    d\psi^{-1}(\xi_1,0) \left( \left\{ (\tau, u') \in \mathbb R^3: |\tau| < CR \quad \text{and} \quad u' \in Y_{\epsilon} \cup (-Y_{\epsilon}) \right\} \right) 
    $$
    constains the set 
    $$
    \left\{ w \in T_{\psi^{-1}(\xi_1,0)} M : \| w \|_{h(\psi^{-1}(\xi_1,0))} < R \right\}.
    $$
    From the definition of $\alpha_{\psi^{-1}(\xi_1,0)}$ provided in Lemma \ref{lemma: int of kernel of regularized pull back in x}, we see that the function $\alpha_{\psi^{-1}(\xi_1,0)}(\bullet)$ is symmetric with respect to rotations in $T_{\psi^{-1}(\xi_1,0)}M$. Thus, we get that 
    \begin{equation*}
        \begin{split}
            &\int_{-\sigma}^{\sigma}\int_{Y_{\epsilon, \textbf k}} \int_{-CR}^{CR} \rho (\psi^{-1}(\xi_1,\xi' = 0)) \\
            & \quad \quad \quad \quad    \alpha_{\psi^{-1}(\xi_1,0)}(d\psi^{-1}(\xi_1, 0)(\tau, u'))  \left|\det( d \psi^{-1}(\xi_1,0
            ))\right| d\tau du' d\xi_1 \\
            &= \frac{1}{2}\int_{-\sigma}^{\sigma}\int_{Y_{\epsilon,\textbf k} \cup (-Y_{\epsilon, \textbf k})} \int_{-CR}^{CR} \rho (\psi^{-1}(\xi_1,\xi' = 0)) \\
            & \quad \quad \quad \quad    \alpha_{\psi^{-1}(\xi_1,0)}(d\psi^{-1}(\xi_1, 0)(\tau, u'))  \left|\det( d \psi^{-1}(\xi_1,0
            ))\right| d\tau du' d\xi_1 \\
            &= \frac{1}{2} \int_{-\sigma}^{\sigma} \rho(\psi^{-1}(\xi_1, 0))\int_{T_{\psi^{-1}(\xi_1,0)}M} \alpha_{\psi^{-1}(\xi_1,0)}(w) dw \, d\xi_1\\
            &= \frac{1}{2} \int_{-\sigma}^{\sigma} \rho(\psi^{-1}(\xi_1, 0)) du_1 = \frac{1}{2} \int_{-\sigma}^{\sigma} \rho(e^{sH_f}(y_0)) ds,
        \end{split}
    \end{equation*}
    where in the second equality we used Lemma \ref{lemma: int of kernel of regularized pull back in x} which says that $\alpha_{\psi^{-1}(\xi_1,0)}(w) = 0$ for any $w \in T_{\psi^{-1}(\xi_1,0)} M$ with $\| w \|_{h(\psi^{-1}(\xi_1,0))} \geq R$. In the third equality, we also used the same Lemma \ref{lemma: int of kernel of regularized pull back in x} that says  $\int_{T_{\psi^{-1}(\xi_1,0)}M} \alpha_{\psi^{-1}(\xi_1,0)}(w) dw = 1$. 
    
    Putting everything together, we obtain that 
    \begin{equation*}
        \begin{split}
            &\lim_{\epsilon \rightarrow 0^+} \int_{\mathbb R}\int_W f(t)\rho(x) K_{A_{\epsilon}(t)} (x,x) dx dt = \frac{c_0 \, f(t_{\per})}{2} \int_{-\sigma}^{\sigma} \rho \left( e^{sH_f}(y_0)\right) ds
        \end{split}
    \end{equation*}
    because $c_1 = 0$.
\end{proof}

In the previous Lemma \ref{lemma: local trace formula for regularized billiard flow on fns}, we take a cut off function $\rho$ supported in the interior of $M$ away from the glancing point. In the next lemma below, we study what happens when the support of $\rho$ contains a boundary point, which may be glancing or not. Specifically, we show the following.

\begin{lemm}
      \label{lemma: local trace formula for reg billiard flow near glanc pt}
    Suppose that $x_0 \in F \cap \{\pi^*b = 0\} \cap \{ H_f \pi^*b = 0 \}$ and $t_{\per} >0 $ are such that $\varphi^{t_{\per}}(x_0) = x_0$. Let $m \in \mathbb N$ be given by Lemma \ref{lemma: Poinc return map for closed traj with m glancing pts}, $\tilde t \in (0, t_{\per})$ be such that $\varphi^{\tilde t}(x_0) \in F \cap \{ \pi^*b = 0 \} \setminus \{ H_f \pi^*b = 0 \}$, and $\sigma > 0$ be small. Define $W_0 = \{x \in M: d(x,x_0) < \sigma \}$, $W_1 = \{x \in M: d(x,\varphi^{\tilde t}(x_0)) < \sigma \}$, and $W_2 = \{x \in M: d(x,\varphi^{\tilde t-0}(x_0)) < \sigma  \}$. If the assumptions in Equation \eqref{eqn: curvature assumption} hold and the closed trajectory $\varphi^{t_{\per}}(x_0) = x_0$ has exactly one glancing point, that is $m=1$, then for any $f \in C_c^{\infty}\left((t_{\per}-\sigma, t_{\per} + \sigma)\right)$ and any $\rho \in C_c^{\infty}(W_j)$ ($j \in \{0, 1, 2 \}$) there exists a constant $C > 0$ such that 
    \begin{equation}
        \label{eqn: loc trace form for regu billiard flow near glanc pt}
        \lim_{\epsilon \rightarrow 0^+}\int_{\mathbb R} \int_{W_j} f(t)\rho(x)K_{A_{\epsilon}(t)}(x,x)\, dx dt < C \, \sigma.
    \end{equation}
\end{lemm}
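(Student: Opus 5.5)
We would follow the same scheme as in the proof of Lemma \ref{lemma: local trace formula for regularized billiard flow on fns}, but aim only for an upper bound, not an exact limit. First we use Equation \eqref{eqn: kernel of regul pullback} and Lemma \ref{lemma: int of kernel of regularized pull back in x}. The key point is that $\rho$ may not vanish at $\partial M$. However, $K_{E_\epsilon}(x,y)$ vanishes unless $d(y,\partial M)\geq 2\epsilon$, by the cutoff $(1-w)$. It also vanishes unless $d(x,y)<\epsilon$. So only $y$ with $d(y,\partial M)>\epsilon$ and $x$ in the $\epsilon$-neighbourhood of such points contribute. This gives the bound
\[
\Big|\int_{\mathbb R}\int_{W_j} f\rho K_{A_\epsilon(t)}(x,x)\,dx\,dt\Big| \leq \|\rho\|_\infty \int_{\mathbb R}\int_{W_j'} |f(t)|\,\bar p(y,\varphi^t(y),\epsilon)\,\epsilon^{-(2n-1)}\,dy\,dt,
\]
where $W_j'$ is a slightly enlarged ball. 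Here $\bar p$ is the same kernel integral taken without $\rho$ and with the factors $(1-w)\le 1$ dropped. It is nonnegative and bounded by $C\,\mathbf 1_{\{d(y,\varphi^t(y))<2\epsilon\}}$, with $C$ uniform in $\epsilon$, since $\chi$ is bounded and compactly supported. Thus everything reduces to estimating the measure of the set of $(t,y)\in \supp f\times W_j'$ with $d(y,\varphi^t(y))\lesssim\epsilon$, divided by $\epsilon^{2n-1}=\epsilon^3$.

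Second, we parametrize $W_j'\cap\{d(\cdot,\partial M)>\epsilon\}$ by flowing a nearby interior Poincar\'e section. For $j=1,2$ (a transversal reflection point) we take the section $U_{y_0}$ of Lemma \ref{lemma: local trace formula for regularized billiard flow on fns} at an interior point $y_0=\varphi^{\tilde t\pm s}(x_0)$. By Lemma \ref{lemma: Poinc map with strict reflections between poinc secs}, the flow time through $W_j'$ is $O(\sigma)$, and every point is written as $\varphi^{\xi_1}(\psi^{-1}(0,\xi'))$ with $|\xi_1|\lesssim\sigma$ and a Jacobian bounded by a constant. For $j=0$ (the glancing point) we do the same with the section $S^-_0$. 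Now Lemma \ref{lemma: relating billiard flow with flow from fried model} and Lemma \ref{lemma: Poincare map between Poinc secs near glanc point} show that, in $\kappa_0$-coordinates, the flow is a translation in $\eta_1$ up to the reparametrization $\beta$, so the volume element is again comparable to $d\xi_1\,d\xi'$ with $|\xi_1|\lesssim\sigma$. In both cases the near-return condition $d(y,\varphi^t(y))<2\epsilon$ gives two constraints. One is $|t-t_{\per}+\widetilde T(\xi')|\lesssim\epsilon$, a condition on $t$ of length $O(\epsilon)$. The other is $|\widetilde P(\xi')-\xi'|\lesssim\epsilon$ for the Poincar\'e return map conjugated to the section. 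The $\xi_1$-integral then contributes the factor $O(\sigma)$.

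Third, and this is the main obstacle, we must show $|\{\xi' : |\widetilde P(\xi')-\xi'|<C\epsilon\}|\le C\epsilon^2$ uniformly. For this we use the injectivity of $\widetilde P - I$ on $\overline{\kappa_0(U_0)}$ and $\overline{\kappa_0(U_1)}$ (Lemma \ref{lemma: injectivity for one glancing}) and change variables $u'=\widetilde P(\xi')-\xi'$ on each piece, as in Lemma \ref{lemma: local trace formula for regularized billiard flow on fns}. The Jacobian $|\det(d\widetilde P-I)|^{-1}$ is bounded on $U_0$ by Equation \eqref{eqn: det of Poinc minus ident when miss bound near glanc set is non-zero and bounded}. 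On $U_1$ it tends to $0$ by Corollary \ref{corr: limits diff of Poinc retur map minus id based on reflect pts}, and in particular it is bounded once $U$ is small. Hence each piece contributes at most $C\cdot|B_{C\epsilon}|=O(\epsilon^2)$, and the two pieces sum to $O(\epsilon^2)$. The transfer from $S^-_0$ (or $U$) to the local section is done through the diffeomorphism $Q$, which preserves these bounds up to constants. Combining, the integral is bounded by $C\,\epsilon^{-3}\cdot\epsilon\cdot\epsilon^2\cdot\sigma=C\sigma$ uniformly in small $\epsilon$, so the $\limsup$, and hence the limit if it exists, is $<C\sigma$.

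For $j=0$ one extra point needs care. The coordinate change from the section is only piecewise smooth across the set where trajectories just graze, where $\widetilde R$ is discontinuous. We would handle this by splitting $W_0'$ along $\{\eta_n=0\}$ and observing that the parametrization is a homeomorphism preserving Lebesgue measure up to bounded factors on each side, which suffices for the upper bound.
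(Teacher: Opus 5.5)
Your reduction of the kernel to $C\epsilon^{-3}\mathbf{1}_{\{d(y,\varphi^t(y))<2\epsilon\}}$ is fine. So is your treatment of $j=1,2$, where the reflection is transversal and the flow-box parametrization is bi-Lipschitz. The gap is in $j=0$.

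There you parametrize $W_0'$ by flowing $S_0^-$ with the billiard flow through the grazing region. You then assert that $d(y,\varphi^t(y))<2\epsilon$ gives $|\widetilde P(\xi')-\xi'|=O(\epsilon)$. That needs the inverse of the parametrization to be Lipschitz, and it is not. In $\kappa_0$-coordinates $(y_2,\eta_1,\eta_2)$, a point on the post-reflection sheet $\{\eta_2>0,\ \eta_1\le-\sqrt{\eta_2}\}$ has section coordinates $\xi'=(y_2-2\sqrt{\eta_2},\eta_2)$. This is only $\tfrac12$-H\"older as $\eta_2\to0^+$, even within a single piece of your splitting along $\{\eta_2=0\}$. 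So the problem is metric distortion, not the volume element that your last paragraph addresses. (Also, $\widetilde R$ is continuous, just not Lipschitz.)

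The defect is not harmless. Take such a $z$ with $(y_2,\eta_2)$ of size $\epsilon$, chosen so that the $\eta_2$-component of $\widetilde\Phi_0(y_2,\eta_2)$ is negative. This is possible because $\langle d\widetilde\Phi_0(0)\partial_{y_2},\partial_{\eta_2}\rangle\neq0$. Then the return $\varphi^t(z)$ misses the boundary and stays within $O(\epsilon)$ of $z$ for a $t$-interval of length $\sim\epsilon$. Yet $\widetilde P(\xi')=\widetilde\Phi_0(y_2,\eta_2)$, so $|\widetilde P(\xi')-\xi'|\approx 2\sqrt{\eta_2}\sim\sqrt{\epsilon}$.

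These $z$ fill a set of measure $\sim\epsilon^2\sigma$, so they contribute at order $\sigma$, the very order you are bounding. They lie outside the set $\{|\widetilde P(\xi')-\xi'|<C\epsilon\}$ that you integrate over. Pairs with both points on the post-reflection sheet have the same defect: there you only get $|\widetilde P(\xi')-\xi'|=O(\epsilon)+O(|\sqrt{\eta_2(\varphi^t(z))}-\sqrt{\eta_2(z)}|)$. So the count $\epsilon^{-3}\cdot\epsilon\cdot\epsilon^2\cdot\sigma$ is not justified as written.

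The paper never uses coordinates that straddle the grazing point. It first isolates near-returns where $z$ and $\varphi^t(z)$ lie on opposite sides, i.e.\ $\eta_2>0$ and $\eta_1\,\eta_1(t)<0$. There $\epsilon$-closeness forces $|\eta_1|,|\eta_1(t)|=O(\epsilon)$ and $0<\eta_2\le\eta_1^2=O(\epsilon^2)$. This set has measure $O(\epsilon^3\sigma^2)$ in $(t,y_2,\eta_1,\eta_2)$, so it contributes $O(\sigma^2)$.

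In the remaining cases, both points can be moved by the geodesic flow $e^{\mp sH_f}$ into $M^\circ$ without meeting the boundary, for a fixed small $s\gg\sigma$. This smooth diffeomorphism commutes with $\varphi^t$ there and distorts distances by a bounded factor. In the smooth flow-box coordinates built on $\kappa_0^{-1}(\{\eta_1=\pm s\})$, closeness then legitimately becomes $|\widetilde P(\xi')-\xi'|=O(\epsilon)$, and your Jacobian argument applies.

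To repair your proof you need this decomposition, or else a genuinely quantitative treatment of the $2\sqrt{\eta_2}$ shift. One possibility would be to exploit that $|\det(d\widetilde P-I)|^{-1}=O(\sqrt{\eta_2})$ on $\kappa_0(U_1)$ to compensate for the enlarged $u'$-region.
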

\begin{proof}
    Let $\epsilon > 0$ and $j \in \{0, 1, 2\}$. We have that 
    $$
    \int_{\mathbb R} \int_{W_j} f(t)\rho(x)K_{A_{\epsilon}(t)}(x,x)\, dx dt < C_1 \int_{t_{\per}-\sigma}^{t_{\per}+\sigma} \int_{W_j} K_{A_{\epsilon}(t)}(x,x) dx dt 
    $$
    for some constant $C_1 > 0$. Using the definition of $K_{A_{\epsilon}(t)}$ provided by Equation \eqref{eqn: kernel of regul pullback} and the definition of $K_{E_{\epsilon}}$ in Equation \eqref{eqn: regularizer}, we get that 
    \begin{equation}
    \label{eqn: initial upper bound for local regularized trace}
        \begin{split}
            \int_{t_{\per}-\sigma}^{t_{\per}+\sigma} \int_{W_j} &K_{A_{\epsilon}(t)}(x,x) dx dt \\
            &\leq \int_{t_{\per}-\sigma}^{t_{\per}+\sigma} \int_{W_j} \int_M \chi \left( \frac{d(x,z)}{\epsilon} \right) \chi \left( \frac{d(\varphi^t(z),x)}{\epsilon} \right) \epsilon^{-6} dz dx dt \\
            &\leq \sup \chi \int_{t_{\per}-\sigma}^{t_{\per}+\sigma} \int_{W_j} \int_M \chi \left( \frac{d(x,z)}{\epsilon} \right) \mathbbm 1_{[0,1]} \left( \frac{d(\varphi^t(z),z)}{2\epsilon} \right) \epsilon^{-6} dz dx dt \\
            & \leq C_2 \int_{t_{\per}-\sigma}^{t_{\per}+\sigma} \int_{W_{j,\epsilon}} \mathbbm 1_{[0,1]} \left( \frac{d(\varphi^t(z),z)}{2  \epsilon} \right) \epsilon^{-3} dz dt, 
        \end{split}
    \end{equation}
    where $W_{j,\epsilon} = \{z \in M : d(z,x) \leq \epsilon \quad \text{for some} \quad x \in W_j \}$ and $\mathbbm 1_{[0,1]}$ is the indicator function on the set $[0,1]$. 
    
    If $\psi$ is any smooth coordinate function for $\widetilde M$ near $x_j$, where $x_1 = \varphi^{\tilde t}(x_0)$ and $x_2 = \varphi^{\tilde t - 0}(x_0)$, then we would like to rewrite the upper bound in Equation \eqref{eqn: initial upper bound for local regularized trace} using the coordinate function $\psi$. So, let $(t,z) \in (t_{\per}-\sigma, t_{\per}+\sigma) \times W_{j,\epsilon}$, $d(z,\varphi^t(z)) \leq 2\epsilon$, and $\exp_z^{-1}(\varphi^t (z)) = 2\epsilon w \in T_z M$. Set $z = \psi^{-1}(\xi)$ and see that 
    $$
    \xi(t) = (\psi \circ \varphi^t \circ \psi^{-1})(\xi) = \left(\psi \circ \exp_{\psi^{-1}(\xi)}\right)(2\epsilon w),
    $$
    which implies that 
    $$
    2\epsilon w = d\psi^{-1}(\xi) (\xi(t) - \xi) + O(\epsilon ^2).
    $$
    We deduce the inequality
    \begin{equation}
    \label{eqn: initial upper bound for local regularized trace in loc coords}
    \begin{split}
        \int_{\mathbb R} &\int_{W_j} f(t)\rho(x) K_{A_{\epsilon}(t)}(x,x)\, dx dt \\
        &< C_1 C_2 \int_{t_{\per}-\sigma}^{t_{\per}+\sigma} \int_{\psi(W_{j,\epsilon})} \mathbbm 1_{[0,1]} \left( \left \|(2\epsilon)^{-1} d\psi^{-1}(\xi) (\xi(t) - \xi) + O(\epsilon) \right\|_{h(\psi^{-1}(\xi))} \right) \epsilon^{-3} d\xi dt. 
    \end{split}
    \end{equation}

    Now we proceed with cases. 

    \noindent
    \textbf{Case 1:} Assume that $j = 0$. 
    
    The set $W_{0,\epsilon}$ contains the glancing point $x_0$. We classify $(t,z) \in (t_{\per} - \sigma, t_{\per}+\sigma) \times W_{0,\epsilon}$ based on whether the Hamiltonian flow $e^{tH_f}$ starting at $z$ and $\varphi^t(z)$ miss or hit the boundary of $M$ after some small positive or negative time. To make this classification, we introduce the coordinate $\kappa_0$ and the variables $(y_2, \eta_1, \eta_2)$ defined in Lemma \ref{lemma: relating billiard flow with flow from fried model}. 
    
    The parameter $\sigma > 0$ is small enough such that $W_{0,\epsilon}$ and $\varphi^{t}(W_{0,\epsilon})$ are in the domain of $\kappa_0$ for all $t \in (t_{\per}-\sigma, t_{\per}+ \sigma)$; so for some $L > 0$ we have  
    \begin{equation}
        \label{eqn: image of W_0 by kappa inside ball of rad prop to sigma }
        \kappa_0(W_{0,\epsilon}) \subset B_{L\sigma}(0) \cap \left\{ \eta_1^2 \geq \eta_2 \right\},
    \end{equation}
    where $B_{L\sigma}(0) = \{(y_2, \eta_1, \eta_2) \in \mathbb R^3: |(y_2, \eta_1, \eta_2)| < L \sigma \}$. Define the set 
    $$
    \widetilde V = (t_{\per} - \sigma, t_{\per} + \sigma) \times \kappa_0(W_{0,\epsilon}),
    $$
    and for each $(t, y_2,\eta_1, \eta_2) \in \widetilde V$ let
    $$
    (y_2(t),\eta_1(t),\eta_2(t)) = \kappa_0 \circ \varphi^t \circ \kappa_0^{-1}(y_2, \eta_1, \eta_2). 
    $$
    
    Moreover, define
    $$
    \widetilde V_1 = \{ (t,y_2, \eta_1, \eta_2) \in \widetilde V: \eta_2 > 0 \quad \text{and} \quad \eta_1(t) \, \eta_1 < 0 \} \quad \text{and} \quad \widetilde V_2 = \widetilde V \setminus \widetilde V_1.
    $$
    If $(t,y_2,\eta_1, \eta_2) \in \widetilde V_1$, then the Hamiltonian flow $e^{tH_f}$ starting at $z = \kappa_0^{-1}(y_2,\eta_1,\eta_2)$ intersects the boundary of $M$ after some small positive (negative) time while the Hamiltonian flow $e^{tH_f}$ starting at $\varphi^t(z)$ may intersect the boundary of $M$ after some small negative (positive) time.

    \noindent
    \textbf{Subcase 1.1:} Suppose that $(t, y_2,\eta_1,\eta_2) \in \widetilde V_1$.

    If we assume that for some constant $K>0$ we have 
    $$
    |(y_2, \eta_1, \eta_2) - (y_2(t), \eta_1(t), \eta_2(t))| \leq K \epsilon,
    $$
    then we get that 
    $$
    |\eta_1 - \eta_1(t)| \leq K \epsilon, \quad \text{which gives} \quad |\eta_1| \leq K \epsilon \quad \text{and} \quad |\eta_1(t)| \leq K \epsilon
    $$
    because the inequality $\eta_1(t) \; \eta_1 < 0$ implies that $\eta_1(t)$ and $\eta_1$ have opposite signs. Since $\eta_1^2 \geq \eta_2$ and $\eta_2 > 0$ for any $(t,y_2,\eta_1, \eta_2) \in \widetilde V_1$, we deduce that 
    $$
    |\eta_2| \leq K^2 \epsilon^2.
    $$
    Set $\psi = \kappa_0$, so we conclude that 
    $$
    \int_{\widetilde V_1} \mathbbm 1_{[0,1]} \left( \left \|(2\epsilon)^{-1} d\kappa_0^{-1}(\xi) (\xi(t) - \xi) + O(\epsilon) \right\|_{h(\kappa_0^{-1}(\xi))} \right) \epsilon^{-3} d\xi dt \leq C_3 \sigma^2
    $$
    for some constant $C_3 > 0$.
    
    \noindent
    \textbf{Subcase 1.2:} Suppose that $(t, y_2, \eta_1, \eta_2) \in \widetilde V_2$.
    
    Define the following subsets 
    $$
    \begin{aligned}
        \widetilde V_2^{\pm} = &\{(t,y_2,\eta_1, \eta_2) \in \widetilde V_2: \eta_2 > 0, \; \pm \eta_1 > 0 \; \text{and} \; \pm \eta_1(t) > 0 \} \cup \\
        &\{ (t,y_2,\eta_1, \eta_2) \in \widetilde V_2: \eta_2 < 0 \; \text{and} \; \pm \eta_1(t) > 0 \}
    \end{aligned}
    $$
    of $\widetilde V_2$, and see that $\widetilde V_2 \setminus \{ \eta_2 = 0 \} = \widetilde V_2^+ \cup \widetilde V_2^-$. If $(t,y_2, \eta_1, \eta_2) \in \widetilde V_2^+$ and $z = \kappa_0^{-1}(y_2, \eta_1, \eta_2)$, then the Hamiltonian flow $e^{tH_f}$ starting at $z$ or $\varphi^t(z)$ does not intersect the boundary of $M$ for all negative times $\tau$ with $|\tau| \leq s$, where $s> 0$ is small but much bigger than $\sigma$.
    
    \noindent
    \textbf{Subsubcase 1.2.1:} Suppose that $(t,y_2, \eta_1, \eta_2) \in \widetilde V^+_2$.
    
    See that for any $s>0$ small,  we have that 
    $$
    (y_2 , \eta_1 + s, \eta_2) \in \{ \eta_1^2 > \eta_2 \} \quad \text{and} \quad (y_2(t), \eta_1(t) + s, \eta_2(t)) \in \{ \eta_1^2 > \eta_2 \}.
    $$ 
    Since
    $$
    \begin{aligned}
        &e^{sH_{y_1}}(y_2,\eta_1 + s, \eta_2) = (y_2, \eta_1, \eta_2) \quad \text{and} \\
        &e^{sH_{y_1}}(y_2(t),\eta_1(t) + s, \eta_2(t)) = (y_2(t), \eta_1(t), \eta_2(t)),
    \end{aligned}
    $$
    we use Lemma \ref{lemma: relating billiard flow with flow from fried model} to deduce that for any $s > 0$ small
    $$
    e^{-sH_f}(x) \in M^{\circ} \quad \text{and} \quad e^{-sH_f}(\varphi^t(x)) \in M^{\circ} \quad \text{for all} \quad (t,x) \in V_2^{+},
    $$
    where $V_2^+ = \{(t,x) \in M: (t,\kappa_0(x)) \in \widetilde V_2^+ \}$.
    
    Moreover, for each $s>0$ small there exists a constant $C > 0$ such that for any $(t,x) \in V_2^+$ with $d(x,\varphi^t(x)) \leq 2 \epsilon$ we have 
    $$
    d(e^{-sH_f}(x),e^{-sH_f}(\varphi^t(x))) \leq C d(x,\varphi^t(x)) \leq C \epsilon.
    $$
    But 
    $$
    e^{-sH_f}(\varphi^t(x)) = \varphi^t(e^{-sH_f}(x))
    $$
    because $\varphi^t(x) \in M^{\circ}$, $e^{-sH_f}(\varphi^t(x)) \in M^{\circ}$ for any $s>0$ small, and the billiard flow on $M^{\circ}$ is given by the Hamiltonian flow generated by $H_f$. Consequently, we obtain
    $$
    d(e^{-sH_f}(x),\varphi^t(e^{-sH_f}(x))) \leq C \epsilon,
    $$ 
    and Equation \eqref{eqn: initial upper bound for local regularized trace} implies
    $$
    \int_{V_2^+} K_{A_{\epsilon}(t)}(x,x) dx dt \leq C_2 \int_{V_{2,s}^+} \mathbbm 1_{[0,1]} \left( \frac{d(\varphi^t(z),z)}{C  \epsilon} \right) \epsilon^{-3} dz dt,
    $$
    where $V_{2,s}^+ = \{(t,e^{-sH_f}(x)): (t,x) \in V_2^+ \}$.
    
    Let $s \gg \sigma$ be small. Then there exists an open neighborhood $W_s \subset M^{\circ}$ of $e^{-sH_f} (x_0)$ such that for all $(t,x) \in V^+_2$, we have  
    $$
    e^{-sH_f}(x), e^{-sH_f}(\varphi^t(x)) \in W_s \; \text{(if $\sigma$ is small enough)}. 
    $$
    Consider the smooth coordinate function $\psi: W_s \rightarrow \mathbb R^3$ defined by 
    $$
    x = \psi^{-1}(\xi_1,\xi_2,\xi_3) = e^{\xi_1H_f }(\kappa_0^{-1}(\xi_2, s, \xi_3)).
    $$
    Notice that we obtain $x = \psi^{-1}(\xi_1, \xi_2, \xi_3) \in W_s$ by starting from $\kappa_0^{-1}(\xi_2,s,\xi_3)$ on the Poincar\'e section $S_s = \kappa_0^{-1} (\{ \eta_1 = s \} )$ and following the Hamiltonian flow $e^{tH_f}$ for time $\xi_1$.  
    
    Define 
    $$
    \widetilde V_{2,s}^+ = \left\{ (t, \psi(z)): (t,z) \in V_{2,s}^+ \right\}
    $$
    and set $\xi = (\xi_1, \xi') \in \mathbb R^3$. For each $(t,\xi) \in \widetilde V_{2,s}^+$, Lemma \ref{lemma: Poinc return map for closed traj with m glancing pts} gives that 
    $$
    \xi(t) = \psi \circ \varphi^t \circ \psi^{-1}(\xi_1,\xi') = (\xi_1 + t-t_{\per}+\widetilde T(\xi'),\widetilde P(\xi')), 
    $$
    where $\widetilde T$ is continuous with $\widetilde T(0) = 0$ and $\widetilde P$ is defined by Equation \eqref{eqn: Poinc return map in sympl coord}.
    
    The change of variables $C \epsilon \, (\tau , u') = \xi(t) - \xi = (t-t_{\per}+ T(\xi'), \widetilde P(\xi') -\xi')$ and a similar argument to the derivation of Equation \eqref{eqn: initial upper bound for local regularized trace in loc coords} give
    $$
    \begin{aligned}
        &\int_{V_2^+} f(t)\rho(x) K_{A_{\epsilon}(t)}(x,x)\, dx dt \\
        &< C_1 C_2 C_3 \int_{-K \sigma}^{+ K \sigma} \int_{\mathbb R^3} \mathbbm 1_{[0,1]} \left( \left \| d\psi^{-1}(\xi_1, \xi'(\epsilon u')) (\tau, u') + O(\epsilon) \right\|_{h(\psi^{-1}(\xi_1,\xi'(\epsilon u')))} \right)  d\tau du'  d \xi_1 
    \end{aligned}
    $$
    where $K, C_3 > 0$ are constants. In fact there exists a constant $K>0$ such that if $(t,\xi_1, \xi') \in \widetilde V_{2,s}^+$, then $|\xi_1| \leq K \sigma$ because  $e^{s H_f}(\psi^{-1}(\xi_1, \xi')) \in W_{0,\epsilon}$. As for the constant $C_3$, notice that Corollary \ref{corr: limits diff of Poinc retur map minus id based on reflect pts} implies that there exists a constant $C_3 > 0$ such that for all 
    $\xi' \in \{ \xi': |\xi'| \leq \widetilde D \sigma \quad \text{and} \quad \xi_3 \neq 0 \}$, where $\widetilde D > 0$ is some constant, we have $|\det(d \widetilde P(\xi') - I) |^{-1} < C_3$. Thus, we get the desired result that 
    $$
    \int_{V_2^+} f(t)\rho(x) K_{A_{\epsilon}(t)}(x,x)\, dx dt  \leq \widetilde C \sigma,
    $$
    for some constant $\widetilde C>0$.

    \noindent
    \textbf{Subsubcase 1.2.2:} Suppose that $(t,y_2, \eta_1, \eta_2) \in \widetilde V^-_2$.

    The argument is similar to the argument in Subcase 1.2.1 except that we choose instead a small negative time $s$. 

    \bigskip
    \noindent
    \textbf{Case 2:} Assume that $j=1$ or $j=2$.
    
    We give a proof when $j=1$ which can be adapted with minor modifications when $j=2$. 
    
    Consider the set $W_{1,\epsilon}$ and see that for $s>0$ small we have $W_{1,\epsilon, s}:= e^{sH_f}(W_{1,\epsilon}) \subset M^{\circ}$. Similar to Subsubcase 1.2.1, Equation \eqref{eqn: initial upper bound for local regularized trace} implies 
    $$
    \int_{W_{1}} K_{A_{\epsilon}(t)}(x,x) dx dt \leq C_2 \int_{W_{1,\epsilon, s}} \mathbbm 1_{[0,1]} \left( \frac{d(\varphi^t(z),z)}{C  \epsilon} \right) \epsilon^{-3} dz dt,
    $$
    where $C>0$ is some constant.
    
    Take a Poincar\'e section $U_s$ of the billiard flow $\varphi^t$ through the point $e^{sH_f}(x_1) \in W_{1,\epsilon,s}$. Let $\psi: W_{1,\epsilon,s} \rightarrow \mathbb R^3$ be the coordinate chosen in Step 1 in the proof of Lemma \ref{lemma: local trace formula for regularized billiard flow on fns} and define 
    $$
    \widetilde W_{1,\epsilon,s} = \{(t,\xi) \in (t_{\per}-\sigma, t_{\per} + \sigma) \times \mathbb R^3: z = \psi^{-1}(\xi) \in W_{1, \epsilon,s} \quad \text{and} \quad d(z,\varphi^t(z)) \leq C \epsilon \}.
    $$
    For any $(t,\xi) \in \widetilde W_{1, \epsilon, s}$, we have 
    $$
    \xi(t):= \Phi^t(\xi) = \psi \circ \varphi^t \circ \psi^{-1} (\xi) = (\xi_1 + t-t_{\per} + \widetilde T(\xi'), \widetilde P(\xi')),
    $$
    where $\xi = (\xi_1, \xi')$, $\widetilde P$ is defined by Equation \eqref{eqn: Poinc return map in sympl coord}, and $\widetilde T$ is a continuous function with $\widetilde T(0) = 0$.

    An argument similar to the argument presented in the last paragraph of Subsubcase 1.2.1 shows that there exists a constant $\widetilde C$ such that 
    $$
    \int_{W_1} f(t)\rho(x) K_{A_{\epsilon}(t)}(x,x)\, dx dt  \leq \widetilde C \sigma.
    $$ 

\end{proof}

We finally give a proof of Theorem \ref{thm: trace formula near fixed closed traj with one grazing point}, which is the main theorem of this paper.
\begin{proof}[Proof of Theorem \ref{thm: trace formula near fixed closed traj with one grazing point}]
    The proof is a direct application of Lemma \ref{lemma: local trace formula for regularized billiard flow on fns} and Lemma \ref{lemma: local trace formula for reg billiard flow near glanc pt}. 

    Let $\delta > 0$ be small and $(t_0,x_0) \in \supp f \times M$. We proceed with cases. 

    \noindent
    \textbf{Case 1:} Suppose that $x_0 \in M^{\circ}$ and $\varphi^{t_0}(x_0) = x_0$.
    
    We take $\sigma > 0$ and $W$ provided by Lemma \ref{lemma: local trace formula for regularized billiard flow on fns}, and this same lemma gives
    $$
    \lim_{\epsilon \rightarrow 0^+}\int_{\mathbb R} \int_M \lambda(t)\rho(x)K_{A_{\epsilon}(t)}(x,x)\, dx dt = \frac{ c_0 \, \lambda(t_{0}) }{2} \int_{-\sigma}^{\sigma} \rho \left( e^{tH_f}(y_0)\right)\, dt
    $$
    for any $\lambda \in C_c^{\infty}(t_0 - \sigma, t_0 + \sigma)$ and $\rho \in C_c^{\infty}(W)$. 

    \noindent
    \textbf{Case 2:} Suppose that $x_0 \in \partial M$ and $\varphi^{t_0}(x_0) = x_0$ or $\varphi^{t_0-0}(x_0) = x_0$.
    
    If $x_0 \in \partial M\setminus S\partial \Sigma$ and $\varphi^{t_0}(x_0) = x_0$, then we take $0 <\sigma < \delta$ and $W = W_1$, where $W_1$ is defined by Lemma \ref{lemma: local trace formula for reg billiard flow near glanc pt}. And if $x_0 \in \partial M\setminus S\partial \Sigma$ and $\varphi^{t_0-0}(x_0) = x_0$, then we take $0 <\sigma < \delta$ and $W = W_2$, where $W_2$ is defined by Lemma \ref{lemma: local trace formula for reg billiard flow near glanc pt}. 
    
    On the other hand, if $x_0 \in S\partial \Sigma$, then we choose $0 < \sigma <\delta$ and $W = W_0$, where $W_0$ is defined by Lemma \ref{lemma: local trace formula for reg billiard flow near glanc pt}. 

    As a result, Lemma \ref{lemma: local trace formula for reg billiard flow near glanc pt} shows that for any $\lambda \in C_c(t_0-\sigma, t_0 + \sigma)$ and $\rho \in C_c^{\infty}(W)$ there exists $C$ such that 
    $$
    \lim_{\epsilon \rightarrow 0^+}\int_{\mathbb R} \int_{M} \lambda(t)\rho(x)K_{A_{\epsilon}(t)}(x,x)\, dx dt < C \, \delta.
    $$

    \noindent
    \textbf{Case 3:} Suppose that $\varphi^{t_0}(x_0) \neq x_0$ and $\varphi^{t_0-0}(x_0) \neq x_0$.

    There exists $r > 0$ such that $d(\varphi^{t_0}(x_0), x_0) > r$ and $d(\varphi^{t_0-0}(x_0),x_0) > r$. Moreover, there exists $\sigma > 0$ and a neighborhood $W$ of $x_0$ in $M$ such that $d(\varphi^t(x),x) > \frac{r}{2}$ for all $t \in (t_0-\sigma, t_0 + \sigma) \times W$. If $\epsilon > 0$ is small enough, we can ensure that 
    $$
    d(\varphi^t(z),z) > \frac{r}{4}
    $$
    for all $(t,z) \in (t_0-\sigma, t_0 + \sigma) \times W_{\epsilon}$, where $W_{\epsilon} = \{z \in M: d(z,x) < \epsilon \quad \text{for some} \quad x \in W \}$. As a result, the definition of $K_{A_{\epsilon}(t)}$ provided by Equation \eqref{eqn: kernel of regul pullback} implies that 
    $$
    \lim_{\epsilon \rightarrow 0}\int_{\mathbb R} \int_M \rho(t,x) K_{A_{\epsilon}(t)}(x,x) dx dt = 0
    $$
    for any $\rho \in C_c((t_0-\sigma, t_0 + \sigma) \times W)$.

    \bigskip
    The compactness of $\supp f \times M$ and a partition of unity finishes the proof.
\end{proof}

\section{Difficulties for multiple glancing points.}
\label{sec: difficulties for mult glancing points}

Here we discuss briefly the challenges in proving Theorem \ref{thm: trace formula near fixed closed traj with one grazing point} for closed billiard trajectories with multiple grazing points and which part of our argument in the case of a single grazing point fails to generalize to multiple grazing points. 

Let $\gamma$ be a closed billiard trajectory with $m$ grazing points in the sense of Lemma \ref{lemma: Poinc return map for closed traj with m glancing pts} and suppose that 
$$
P: U \rightarrow U, \quad P(y_0^-) = y_0^-
$$
is the associated Poincar\'e return map given by the same lemma. Furthermore, let $\kappa_0$ be the coordinate defined in Lemma \ref{lemma: relating billiard flow with flow from fried model} and $\widetilde P = \kappa_0 \circ P \circ \kappa_0^{-1}$ the conjugated Poincar\'e return map given by Equation \eqref{eqn: Poinc return map in sympl coord}.  

The first challenge we need to overcome to prove Theorem \ref{thm: trace formula near fixed closed traj with one grazing point} for closed trajectories with $m$ grazing points is show that for each $\textbf k \in \{ 0, 1\}^m$, the map $\widetilde P - I$ is injective on the closure of the set $\kappa_0(U_{\textbf{k}})$, where $U_{\textbf k} \subset U$ is defined by Equation \eqref{eqn: partition of U based on how many boundary hitting points}. When $m=1$, we manage to prove the injectivity of $\widetilde P - I$ in Lemma \ref{lemma: injectivity for one glancing}, but our proof does not easily generalize to $m > 1$. 

Once the issue of injectivity of $\widetilde P - I$ is resolved, still one needs to understand for all $\epsilon > 0$ small and some fixed $R>0$ the sets 
$$
V_{\textbf{k},\epsilon} = \left\{u' \in \mathbb R^2: |u'| < R \quad \text{and} \quad \epsilon u' = \widetilde P(z) - z \quad \text{for some} \quad z \in \kappa_0(U_{\textbf k}) \right\}. 
$$
This is done for $m=1$ in Lemma \ref{lemma: image of missing or hitting subset under returm map minus identity}; so we need a similar lemma when $m>1$.

After the two technical difficulties mentioned above are solved, then with minor modifications one should be able to prove a lemma analogous to Lemma \ref{lemma: local trace formula for regularized billiard flow on fns} but for $m>1$, which would give Theorem \ref{thm: trace formula near fixed closed traj with one grazing point} for closed trajectories with multiple grazing points.

%%%%%%%%%%%%%%%%%%%%%%%%%%%%%%%%%%%%%%%%%%%%%%%%%%%%%%%%%%%%%%%%%%%%%%%%%%%%%%%%
% BIBLIOGRAPHY
%%%%%%%%%%%%%%%%%%%%%%%%%%%%%%%%%%%%%%%%%%%%%%%%%%%%%%%%%%%%%%%%%%%%%%%%%%%%%%%%

\end{document}